\numberwithin{equation}{section}
\let\left\mleft
\let\right\mright
\def\d{ \mathrm{d}}
\title{On an Inverse Problem of the Generalized Bathtub Model of Network Trip Flows\thanks{
\funding{The work of B. Jin is supported by Hong Kong RGC General Research Fund (Project 14306423), and a start-up fund from The Chinese University of Hong Kong. The work of  Z. Zhou is supported by Hong Kong Research Grants Council (15303122) and an internal grant of Hong Kong Polytechnic University (Project ID: P0038888, Work Programme: 1-ZVX3).}}}
\author{Kuang Huang\thanks{Department of Mathematics, The Chinese University of Hong Kong, Shatin, New Territories, Hong Kong, P.R. China (\email{kuanghuang@cuhk.edu.hk}, \email{b.jin@cuhk.edu.hk}).}
\and Bangti Jin\footnotemark[2]
\and Zhi Zhou\thanks{Department of Applied Mathematics,
The Hong Kong Polytechnic University, Kowloon, Hong Kong, P.R. China (\email{zhizhou@polyu.edu.hk})}}
\begin{document}

\maketitle

\begin{abstract}
In this work, we investigate the generalized bathtub model, a nonlocal transport equation for describing network trip flows served by privately operated vehicles inside a road network. First, we establish the well-posedness of the mathematical model for both classical and weak solutions. Then we consider an inverse source problem of the model with model parameters embodying particular traffic situations. We establish a conditional Lipschitz stability of the inverse problem under suitable \textit{a priori} regularity assumption on the problem data, using a Volterra integral formulation of the problem. Inspired by the analysis, we develop an easy-to-implement numerical method for reconstructing the flow rates, and provide the error analysis of the method. Further we present several numerical experiments to complement the theoretical analysis.
\end{abstract}

\begin{keywords}
nonlocal transport equation, inverse source problem, stability, reconstruction algorithm, traffic flow
\end{keywords}

\begin{MSCcodes}
35R30, 76A30, 35R09, 45D05, 65R32
\end{MSCcodes}

\section{Introduction}

Transportation systems serve trips of passengers and goods and play a vital role in many economic and social activities. Mathematically describing transportation systems is complex due to the existence of interactions among passengers, vehicles, facilities, operators, and other stakeholders. Various mathematical models have been proposed to assist the plan, design, management, and operation of these complex systems (see, e.g., the recent monograph \cite{Jin:2021} for an overview), which has been a core task for transportation researchers and engineers alike.

For privately operated vehicles, the number of active trips can be converted to the number of running vehicles with given occupancies. Existing traffic flow models generally describe
the evolution of vehicular flows with privately operated vehicles and are often viewed as trip flow models, which can be roughly divided into three types depending on how a road network is treated, i.e., a discrete
network of roads and intersections (see, e.g., \cite{Hidas:2005,RoessMcShane:2010,Jin:2012}), 
continuum two-dimensional region (see, e.g., \cite{Beckmann:1952,HoWong:2006}), single unit with roads of the same traffic conditions. In the third type, a road network with vehicular
traffic flows is treated analogously to a bathtub with water flow, in which the detailed topology of a network or  spatial distribution of origins, destinations, routes and
roads are ignored, and only the lane-miles as well as the average speed-density relation (i.e., network fundamental diagram)
are relevant, and hence the name bathtub model. The bathtub model was first developed and analyzed by Vickrey \cite{Vickrey:1991,Vickrey:1994,Vickrey:2019,Vickrey:2020}. Distinctly, it can capture the interactions of dynamic travel demand and network supply effectively, and thus has been employed in several applications.
Small and Chu \cite{SmallChu:2003} used Vickrey's bathtub model to find the departure time user equilibrium in a network. Fosgerau \cite{Fosgerau:2015} employed the model to describe
deterministic distributions of trip distances, in which trips are ``regularly sorted'' such that shorter trips enter the network later
but exit earlier than longer ones (last-in-first-out). 
Arnott et al \cite{Arnott:2013,ArnottNaji:2016,ArnottBuli:2018} developed a bathtub model in which all trips are
assumed to have the same distance, and the dynamics of the number of active trips and the experienced travel times are modeled by delay-differential equations. Thus, the bathtub model represents an important class of mathematical models in traffic flow modeling.

In this work, we investigate the following \emph{generalized bathtub model} proposed in \cite{jin2020generalized}:
\begin{align}\label{eq:bathtub_model}
    \partial_t k(t,x) - V \left( \int_0^\infty k(t,y) w(y) \d y \right) \partial_x k(t,x) = f(t) \phi(t,x),
\end{align}
where $t$ represents time and $x$ represents the remaining trip distance, $k(t,x)$ is the density of active trips at time $t$ with a remaining distance $x$, $f(t)$ is the inflow rate, i.e., rate of trips entering the network at time $t$, and $\phi(t,x)$ is the inflow distribution with respect to $x$ at time $t$. The function $w=w(y)$ is a nonnegative kernel and the integral $\int_0^\infty k(t,y) w(y) \d y$ denotes the weighted average density of active trips on the network;
and $V=V(k)$ describes the relation between trips' average density and average speed.

The model \eqref{eq:bathtub_model} was  derived from the fundamental assumption that the velocity function $V=V(k)$ can capture the relation between all trips' densities and speeds on an aggregate level \cite{jin2020generalized}. 
Such a density-speed relation, and its corresponding density-flow relation $Q=Q(k)=k V(k)$, is also referred to as the {macroscopic fundamental diagram}.
This density-speed relation has been  justified by either empirical evidences \cite{johari2021macroscopic,geroliminis2008existence,batista2021identification} or analytical derivation \cite{daganzo2008analytical}. Moreover, its dependence on network topology has also been examined \cite{knoop2015network}.
There is a growing number of research works since \cite{daganzo2008analytical} that build models based on the macroscopic fundamental diagram. These models consider the number of active trips, their average trip distance, or the variation of their trip distances as primary variables and build ODEs to describe the relation of those variables. In contrast, the generalized bathtub model considers the distribution of active trips with respect to their remaining trip distances and it is formulated as a PDE with a nonlocal integral term. It is shown in \cite{jin2020generalized} that under the assumption that the active trips have an 
exponential distribution over remaining trip distances, the generalized bathtub model reduces to the model proposed in \cite{daganzo2008analytical}.

In all these existing works, the kernel $w(y)$ is chosen to be
\[ w(y)=\frac1L\mathbf{1}_{[0,L]}(y), \quad y\in[0,\infty), \]
where $L$ is the total lane-miles of the network. It can capture the impacts of demand on traffic congestion, represented by the inflow and entering trips' distances, as well as the supply, represented by the network fundamental diagram. For trips served by privately operated vehicles, the generalized bathtub model is similar to the Lighthill-Whitham-Richards (LWR) model \cite{lighthill1955kinematic,richards1956shock} (see also \cite{payne1971model,aw2000resurrection,zhang2002non} for related continuum traffic flow models), as it is also derived from the fundamental diagram and the conservation law of vehicles. However, as a transport equation with a nonlocal speed, the generalized bathtub model admits no shock or rarefaction waves, since the characteristic curves do not interact with each other. 
The role of the generalized bathtub model for trip flow dynamics is the same as that of the LWR model for traffic flow dynamics.

The model \eqref{eq:bathtub_model} is equipped with the following initial condition:
\begin{align}\label{eq:initial_condition}
    k(0,x) = \bar{k}(x), \quad x\in[0,\infty).
\end{align}
Note that when the velocity $V$ stays nonnegative, there is only outflow at the boundary $x=0$ and no boundary condition is needed.
All inflows are described by the source term $f(t)\phi(t,x)$.

We contribute to the mathematical analysis of both direct problem and inverse problem associated with the generalized bathtub model \eqref{eq:bathtub_model}, with our main contributions summarized as follows. First, we establish the well-posedness of the direct problem under mild assumptions on the kernel $w$, employing a fixed point argument combined with the method of characteristics. Second, we investigate an inverse source problem to recover the inflow rates $f(t)$ from lateral boundary measurements $k(t,0)$ over a given time horizon $t\in[0,T]$. We show a conditional Lipschitz stability by reformulating the original problem as a nonlinear Volterra integral equation of the second kind and employing the method of successive approximations. 
Third, inspired by the analysis, we develop an easy-to-implement finite-difference method for numerically reconstructing the inflow rates $f$ and provide an error analysis of the method.
Fourth and last, we demonstrate the effectiveness of the reconstruction method and validate the theoretical findings through a series of numerical experiments. Additionally, we extend our numerical investigation to an inverse problem of recovering the time-independent inflow distribution $\phi$ from the lateral boundary data $k(\cdot,0)$. 
 
Now we put the work into the context of mathematical modeling for traffic flows. Nonlocal traffic flow models have been extensively studied over the last decade, and various well-posedness results have been obtained (see, e.g., \cite{BlandinGoatin:2016,bressan2019traffic,keimer2017existence}). Furthermore, due to the practical significance of traffic flow modeling, it is important to calibrate traffic flow models with field data.
One well established procedure is to estimate the macroscopic fundamental diagram, especially for first-order traffic flow models \cite{ZhongHe:2016,CoullonPokern:2022}, or nonlinear optimization with the dynamics described by first or high-order models \cite{Spiliopoulou:2014,Mohammadia:2021}; see \cite{WangSun:2022} for a state-of-the-art review. More recently machine learning techniques have also been integrated with traffic flow models for parameter estimation \cite{ShiMoDi:2021,ShiMoHuang:2022,HuangAgarwal:2022,ZhaoYu:2023}. However, none of these existing works has studied  analytically or numerically the generalized bathtub model, and the analytical study of inverse problems for traffic flow models remains very limited to this date. To the best of our knowledge, the present work contributes one first rigorous mathematical study to the topic. 

The rest of the paper is organized as follows. In \cref{sec:direct}, we discuss the well-posedness of the direct problem. Then in \cref{sec:inverse1}, we analyze the inverse source problem of recovering the inflow rates from boundary data. In \cref{sec:recon}, we propose an algorithm for recovering the inflow rates and discuss the error estimate. Finally in \cref{sec:exp}, we present several numerical experiments illustrating the feasibility of the reconstruction.

\section{Well-posedness of the direct problem}\label{sec:direct}

In this section, we establish the well-posedness of the direct problem \eqref{eq:bathtub_model}-\eqref{eq:initial_condition}. Throughout this section, we make the following assumptions on the nonlocal kernel $w$, the velocity function $V$, and the inflow rates $f$. Additionally, the notation $\norm{\,\cdot\,}_{\mathbf{L}^p}$ for $p\in[1,\infty]$ is used as shorthand for the $\mathbf{L}^p$ norm on $[0,\infty)$, and $\mathrm{TV}(\,\cdot\,)$ denotes the total variation on $[0,\infty)$.

\begin{assumption}\label{assm:1}
{\rm(i)} The nonlocal kernel $w\in\mathbf{L}^1\cap\mathbf{L}^\infty([0,\infty);[0,\infty))$ and it satisfies $\int_0^\infty w(y)\d y=1$; {\rm(ii)} the velocity function $V$ is Lipschitz continuous on $[0,\infty)$ and satisfies $V(k)\geq0$ for all $k\geq0$; and {\rm(iii)} the inflow rates $f\in\mathbf{L}^\infty([0,\infty);[0,\infty))$.
\end{assumption}

We first give the existence, uniqueness, and stability of strong solutions, assuming that both the initial data $\bar{k}$ and the inflow distribution $\phi$ are Lipschitz continuous. The proof is based on combining a fixed point argument with the method of characteristics, which has been widely used in the study of hyperbolic conservation and balance laws with nonlocal integrals inside the flux function; See, e.g., \cite{coron2010analysis,bressan2019traffic,colombo2019singular,keimer2017existence}. 
\begin{theorem}\label{thm:wellposedness_strong}
Suppose that the initial data $\bar{k}\in\mathbf{L}^\infty([0,\infty);[0,\infty))$ is Lipschitz continuous, that $\phi\in\mathbf{L}^\infty([0,\infty)\times[0,\infty);[0,\infty))$ is Lipschitz continuous in space and its Lipschitz constant is locally bounded on $t\in[0,\infty)$, and that \cref{assm:1} holds.
Then there exists a unique Lipschitz continuous function $k$ that satisfies problem \eqref{eq:bathtub_model}-\eqref{eq:initial_condition} pointwise for $(t,x)\in[0,\infty)\times[0,\infty)$.
Moreover, we have the following stability estimate:
\begin{align}\label{eq:ch2_stab_Linf}
    &\|{k^{(1)}(t,\cdot)-k^{(2)}(t,\cdot)}\|_{\mathbf{L}^\infty} \\
    &\quad \leq C(t) \left( \|{\bar{k}^{(1)} - \bar{k}^{(2)}}\|_{\mathbf{L}^\infty} + \sup\nolimits_{\tau\in[0,t]}\|{\phi^{(1)}(\tau,\cdot)-\phi^{(2)}(\tau,\cdot)}\|_{\mathbf{L}^\infty} \right), \notag
\end{align}
for $t\in[0,\infty)$, where $C(t)$ depends on $t,\norm{f}_{\mathbf{L}^\infty}, \norm{V'}_{\mathbf{L}^\infty}, \norm{\frac{\d}{\d x}\bar{k}^{(1)}}_{\mathbf{L}^\infty}$, and \\$ \sup\nolimits_{\tau\in[0,t]} \norm{\partial_x\phi^{(1)}(\tau,\cdot)}_{\mathbf{L}^\infty}$.
\end{theorem}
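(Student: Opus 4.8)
The plan is to exploit the crucial structural feature that the nonlocal velocity $V\!\left(\int_0^\infty k(t,y)w(y)\,\d y\right)$ depends only on $t$ and not on $x$. Writing $v(t) := V\!\left(\int_0^\infty k(t,y)w(y)\,\d y\right)$, equation \eqref{eq:bathtub_model} becomes a \emph{linear} transport equation $\partial_t k - v(t)\partial_x k = f(t)\phi(t,x)$ whose coefficient is spatially uniform but a priori unknown. The characteristics solve $\dot x = -v(t)\le 0$, so the characteristic through $(t,x)$ traced backward in time stays in $x\ge 0$ and reaches the initial line $t=0$; this confirms that no boundary datum at $x=0$ is required. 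Integrating the source along characteristics yields, for any given $v$, the explicit representation
\begin{equation*}
k(t,x) = \bar{k}\Big(x + \int_0^t v(\tau)\,\d\tau\Big) + \int_0^t f(s)\,\phi\Big(s,\, x + \int_s^t v(\tau)\,\d\tau\Big)\,\d s.
\end{equation*}

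First I would set up a fixed point for the unknown velocity profile. Given $v\in C([0,T];[0,\infty))$, define $k[v]$ by the formula above and set $\mathcal{T}[v](t) := V\!\left(\int_0^\infty k[v](t,y)\,w(y)\,\d y\right)$; a solution of \eqref{eq:bathtub_model}--\eqref{eq:initial_condition} corresponds exactly to a fixed point $v=\mathcal{T}[v]$. Since $V\ge 0$ by \cref{assm:1}, $\mathcal{T}$ maps $C([0,T];[0,\infty))$ into itself. For contractivity I would estimate $k[v_1]-k[v_2]$ using the spatial Lipschitz continuity of $\bar{k}$ and of $\phi$: the two characteristic footpoints differ by at most $\int_0^t|v_1-v_2|\,\d\tau$, whence, with $L_{\bar k}$ and $L_\phi(s)$ denoting the spatial Lipschitz constants,
\begin{equation*}
\|k[v_1](t,\cdot)-k[v_2](t,\cdot)\|_{\mathbf{L}^\infty} \le \Big(L_{\bar k} + \|f\|_{\mathbf{L}^\infty}\!\int_0^t L_\phi(s)\,\d s\Big)\int_0^t |v_1-v_2|\,\d\tau.
\end{equation*}
Combining this with the Lipschitz continuity of $V$ and $\|w\|_{\mathbf{L}^1}=1$ gives $|\mathcal{T}[v_1](t)-\mathcal{T}[v_2](t)| \le C\int_0^t|v_1-v_2|\,\d\tau$. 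Equipping $C([0,T])$ with the Bielecki weighted norm $\|v\|_\lambda := \sup_{t\in[0,T]}e^{-\lambda t}|v(t)|$ turns this into a genuine contraction for $\lambda$ large, so Banach's theorem yields a unique fixed point $v$ on every finite interval, and hence a unique $k$. This weighted-norm device sidesteps any blow-up concern and delivers global-in-time existence and uniqueness in one stroke.

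Next I would verify the claimed regularity. Differentiating the representation formula, the spatial Lipschitz constant of $k(t,\cdot)$ is bounded by $L_{\bar k}+\|f\|_{\mathbf{L}^\infty}\int_0^t L_\phi(s)\,\d s$, which is finite on bounded time intervals; for Lipschitz continuity in $t$ one checks that $\partial_t k = v(t)\bar{k}'(\cdots) + f(t)\phi(t,x) + v(t)\int_0^t f(s)\partial_x\phi(s,\cdots)\,\d s$ is bounded, using only $\partial_x\phi$, the boundedness of $\phi$ and $f$, and the already established bound on $v$. Thus $k$ is jointly Lipschitz, and substituting the formula back into \eqref{eq:bathtub_model} confirms that it satisfies the equation pointwise.

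Finally, the stability estimate \eqref{eq:ch2_stab_Linf} follows by the same mechanism with a Gronwall closure. Writing the representation for $k^{(1)}$ and $k^{(2)}$ and splitting each difference into a data part and a characteristic-displacement part (using the Lipschitz constant of the first solution's data), I would control the displacement $|A^{(1)}(s,t)-A^{(2)}(s,t)|$ by $\|V'\|_{\mathbf{L}^\infty}\int_0^t\|k^{(1)}(\tau,\cdot)-k^{(2)}(\tau,\cdot)\|_{\mathbf{L}^\infty}\,\d\tau$. Setting $g(t) := \|k^{(1)}(t,\cdot)-k^{(2)}(t,\cdot)\|_{\mathbf{L}^\infty}$ leads to an inequality of the form $g(t)\le D(t)+E(t)\int_0^t g(\tau)\,\d\tau$, with data term $D(t)=\|\bar{k}^{(1)}-\bar{k}^{(2)}\|_{\mathbf{L}^\infty}+\|f\|_{\mathbf{L}^\infty}\int_0^t\|\phi^{(1)}(s,\cdot)-\phi^{(2)}(s,\cdot)\|_{\mathbf{L}^\infty}\,\d s$ and $E(t)$ depending on $\|V'\|_{\mathbf{L}^\infty}$ and the Lipschitz data of the first solution; Gronwall's inequality then yields \eqref{eq:ch2_stab_Linf} with $C(t)$ of precisely the stated form. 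The main obstacle throughout is securing the uniform-in-$x$ (that is, $\mathbf{L}^\infty$) control of how a perturbation of the nonlocal velocity propagates through the characteristic footpoints into $k$; this is exactly where the spatial Lipschitz continuity of $\bar{k}$ and $\phi$ is indispensable, and it is the spatial independence of $v$ that keeps every constant finite.
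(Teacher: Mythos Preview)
Your argument is correct and follows the same overall strategy as the paper: method of characteristics to obtain the representation formula, a contraction fixed-point argument for existence, and a Gronwall closure for the stability estimate. There are, however, two genuine technical differences worth noting. First, the paper sets up the fixed point on the density $k$ itself (defining $\Gamma[k]$ as the solution of the linear transport problem with velocity computed from $k$), whereas you set it up on the scalar velocity profile $v\in C([0,T])$; your formulation works in a smaller space and makes the Volterra structure of the map $\mathcal{T}$ more transparent. Second, the paper obtains a contraction only for sufficiently small $T$ and then uses the a~priori bound $\|\partial_x k(t,\cdot)\|_{\mathbf{L}^\infty}\le\|\bar{k}'\|_{\mathbf{L}^\infty}+t\|f\|_{\mathbf{L}^\infty}\sup_{\tau\le t}\|\partial_x\phi(\tau,\cdot)\|_{\mathbf{L}^\infty}$ to continue the solution globally, while your Bielecki weighted norm delivers the global fixed point on any $[0,T]$ in one step. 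Both routes are standard for Volterra-type problems; yours is slightly more economical, while the paper's small-time-plus-continuation argument makes the role of the spatial Lipschitz bound on $k$ (which does not blow up) more explicit. The regularity verification and the stability proof are essentially identical to the paper's Steps~2 and~3.
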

\begin{proof}
We break down the proof into three steps.\\
\textbf{(Step 1)} Fix a $T>0$ and consider the domain
\begin{align*}
    \mathcal{D} \doteq \mathbf{L}^\infty([0,T]\times[0,\infty);[0,\infty)).
\end{align*}
For any $k \in \mathcal{D}$, let $h = \Gamma [k]$ be the solution of the following linear Cauchy problem:
\begin{align*}
    &\partial_th(t,x) - v(t)\partial_xh(t,x) = f(t)\phi(t,x), \qquad h(0,x) = \bar{k}(x), \\
    &\mathrm{with} \quad v(t) = V \left( \int_0^\infty k(t,y) w(y) \d y \right).
\end{align*}
The characteristic curves of the problem are given by
\[ t(\tau) = \tau, \quad x(\tau) = x_0 - \xi(\tau), \quad \tau\in[0,\infty), \]
for any $x_0\geq0$, with
\begin{align}\label{eq:char_xi}
    \quad \xi(\tau) = \int_0^\tau v(\tilde{\tau}) \d\tilde{\tau}, \quad \tau\in[0,\infty).
\end{align}
Using the method of characteristics, we obtain
\begin{align*}
    h(t,x) = \bar{k}(x+\xi(t)) + \int_0^t f(\tau) \phi(\tau,x+\xi(t)-\xi(\tau)) \d\tau, \quad t\in[0,T],\, x\geq0.
\end{align*}
Given the Lipschitz continuity of $\bar k$ and $\phi$, it is straightforward to verify that $h\in\mathcal{D}$, and therefore, $\Gamma$ maps $\mathcal{D}$ into itself. Next we show that $\Gamma$ is a contraction mapping on $\mathcal{D}$.
For any $k_1,k_2\in\mathcal{D}$, we denote $h_1=\Gamma [k_1]$ and $h_2=\Gamma [k_2]$ and use $\xi_1$ and $\xi_2$ to represent the respective characteristic curves. Then we have
\begin{align*}
    &|h_1(t,x) - h_2(t,x)| \leq |\bar{k}(x+\xi_1(t)) - \bar{k}(x+\xi_2(t))| \\
    & \qquad \qquad + \int_0^t f(\tau) |\phi(\tau,x+\xi_1(t)-\xi_1(\tau)) - \phi(\tau,x+\xi_2(t)-\xi_2(\tau))| \d\tau \\
    & \qquad \leq \norm{\bar{k}'}_{\mathbf{L}^\infty} |\xi_1(t)-\xi_2(t)| + \norm{f}_{\mathbf{L}^\infty} M_{\phi} \int_0^t |\xi_1(t)-\xi_2(t)| + |\xi_1(\tau)-\xi_2(\tau)| \d\tau,
\end{align*}
for all $t\in[0,T]$ and $x\geq0$, where $ M_\phi \doteq \sup_{t\in[0,T]} \norm{\partial_x\phi(t,\cdot)}_{\mathbf{L}^\infty} $.
Moreover, by the definition of $\xi$ in \eqref{eq:char_xi} we have
\begin{align*}
    \sup\nolimits_{t\in[0,T]} |\xi_1(t) - \xi_2(t)|  \leq T\norm{V'}_{\mathbf{L}^\infty} \sup\nolimits_{t\in[0,T]} \norm{k_1(t,\cdot)-k_2(t,\cdot)}_{\mathbf{L}^\infty}.
\end{align*}
Consequently, we obtain
\begin{align*}
    &\sup\nolimits_{t\in[0,T]} \norm{h_1(t,\cdot) - h_2(t,\cdot)}_{\mathbf{L}^\infty} \\
    \leq& T\norm{V'}_{\mathbf{L}^\infty} \left( \norm{\bar{k}'}_{\mathbf{L}^\infty} + 2T \norm{f}_{\mathbf{L}^\infty} M_{\phi} \right) \sup\nolimits_{t\in[0,T]}\norm{k_1(t,\cdot)-k_2(t,\cdot)}_{\mathbf{L}^\infty}.
\end{align*}
This estimate implies that by choosing $T$ sufficiently small, $\Gamma$ is a contraction mapping on $\mathcal{D}$. Then by Banach contraction mapping theorem, there exists a unique $k\in\mathcal{D}$ such that $k=\Gamma [k]$. \\
\textbf{(Step 2)} Note that $k$ satisfies
\begin{align}\label{eq:char_sol}
    k(t,x) = \bar{k}(x+\xi(t)) + \int_0^t f(\tau) \phi(\tau,x+\xi(t)-\xi(\tau)) \d\tau, \quad t\in[0,T], \, x\geq0,
\end{align}
which together with the Lipschitz continuity of $\bar{k}$, $\phi$ and $\xi$ yields the Lipschitz continuity of $k$ in both space and time.
Thus it follows from the representation \eqref{eq:char_sol} that $k$ satisfies \eqref{eq:bathtub_model}-\eqref{eq:initial_condition} pointwise. Moreover, the representation \eqref{eq:char_sol} gives the following estimate on $\|{\partial_xk(t,\cdot)}\|_{\mathbf{L}^\infty}$:
\begin{align*}
        \|{\partial_xk(t,\cdot)}\|_{\mathbf{L}^\infty} \leq \|{\bar{k}'}\|_{\mathbf{L}^\infty} + t \|{f}\|_{\mathbf{L}^\infty} \sup\nolimits_{\tau\in[0,t]} \|{\partial_x\phi}(\tau,\cdot)\|_{\mathbf{L}^\infty},
    \end{align*}
which allows us to extend the solution to all times $t\in[0,\infty)$.\\
\textbf{(Step 3)} Now consider two sets of initial data and inflow distributions: $\bar{k}^{(i)}$ and $\phi^{(i)}$ for $i=1,2$, and denote the respective solutions by $k^{(i)}$, $i=1,2$.
Using the relations \eqref{eq:char_xi}-\eqref{eq:char_sol}, we derive that for $i=1,2$:
\begin{align}
    k^{(i)}(t,x) &= \bar{k}^{(i)}\left(x+\xi^{(i)}(t)\right) + \int_0^t f(\tau) \phi^{(i)}\left(\tau,x+\xi^{(i)}(t)-\xi^{(i)}(\tau)\right) \d\tau, \label{eq:ch2_stab_calc_1} \\
    \xi^{(i)}(\tau) &= \int_0^\tau V \left( \int_0^\infty k^{(i)}(\tilde{\tau},y) w(y) \d y \right) \d\tilde{\tau}. \label{eq:ch2_stab_calc_2}
\end{align}
Using the identity \eqref{eq:ch2_stab_calc_1}, we obtain 
\begin{align*}
    &\|{k^{(1)}(t,\cdot)-k^{(2)}(t,\cdot)}\|_{\mathbf{L}^\infty} \leq \|{\bar{k}^{(1)} - \bar{k}^{(2)}}\|_{\mathbf{L}^\infty} + t\|{f}\|_{\mathbf{L}^\infty} \sup_{\tau\in[0,t]}\|{\phi^{(1)}(\tau,\cdot)-\phi^{(2)}(\tau,\cdot)}\|_{\mathbf{L}^\infty} \\
    & \qquad\qquad + \norm{\frac{\d}{\d x}\bar{k}^{(1)}}_{\mathbf{L}^\infty} E(t,0) + \sup\nolimits_{\tau\in[0,t]} \norm{\partial_x\phi^{(1)}(\tau,\cdot)}_{\mathbf{L}^\infty}\|{f}\|_{\mathbf{L}^\infty} \int_0^t E(t,\tau) \d\tau,
\end{align*}
with
\begin{equation}\label{eq:E_t_tau}
    E(t,\tau) = \left|\xi^{(1)}(t)-\xi^{(1)}(\tau)-\xi^{(2)}(t)+\xi^{(2)}(\tau)\right|.
\end{equation}
Then by using \eqref{eq:ch2_stab_calc_2}, we obtain 
\begin{align*}
    E(t,\tau) \leq \norm{V'}_{\mathbf{L}^\infty}\int_{\tau}^t \|{k^{(1)}(\tilde{\tau},\cdot)-k^{(2)}(\tilde{\tau},\cdot)}\|_{\mathbf{L}^\infty} \d\tilde{\tau},
\end{align*}
and thus
\begin{align*}
    \|{k^{(1)}(t,\cdot)-k^{(2)}(t,\cdot)}\|_{\mathbf{L}^\infty} \leq& \|{\bar{k}^{(1)} - \bar{k}^{(2)}}\|_{\mathbf{L}^\infty} + t\|{f}\|_{\mathbf{L}^\infty} \sup_{\tau\in[0,t]}\|{\phi^{(1)}(\tau,\cdot)-\phi^{(2)}(\tau,\cdot)}\|_{\mathbf{L}^\infty} \\
    &+ C(1+t) \int_0^t \|{k^{(1)}(\tau,\cdot)-k^{(2)}(\tau,\cdot)}\|_{\mathbf{L}^\infty} \d\tau,
\end{align*}
where the constant $C=C\left(\norm{\frac{\d}{\d x}\bar{k}^{(1)}}_{\mathbf{L}^\infty}, \sup\nolimits_{\tau\in[0,t]} \norm{\partial_x\phi^{(1)}(\tau,\cdot)}_{\mathbf{L}^\infty}, \|{f}\|_{\mathbf{L}^\infty}, \norm{V'}_{\mathbf{L}^\infty} \right)$.
By Gronwall's inequality, we deduce the desired stability estimate \eqref{eq:ch2_stab_Linf}.
The uniqueness of solutions follows directly from the stability estimate.
\end{proof}

Next we give the following $\mathbf{L}^1$ stability estimate for strong solutions, which leads to the existence, uniqueness, and stability of weak solutions to problem \eqref{eq:bathtub_model}-\eqref{eq:initial_condition} with rough data, cf. \cref{thm:wellposedness_weak}.

\begin{lemma}\label{lemm:forward_stability_L1}
Let \cref{assm:1} be fulfilled. 
Suppose that for $i=1,2$: $\bar{k}^{(i)}\in\mathbf{L}^1([0,\infty);[0,\infty))$ is of bounded variation; and $\phi^{(i)}(t,\cdot)\in\mathbf{L}^\infty([0,\infty);[0,\infty))$ is of bounded variation for all $t\in[0,\infty)$ and its total variation is locally bounded on $t\in[0,\infty)$.
Then the respective solutions $k^{(i)},\,i=1,2,$ satisfy
\begin{align}\label{eq:ch2_stab_L1}
    \|{k^{(1)}(t,\cdot)-k^{(2)}(t,\cdot)}\|_{\mathbf{L}^1} \leq C(t) \left( \|{\bar{k}^{(1)}-\bar{k}^{(2)}}\|_{\mathbf{L}^1} +  \int_0^t \|{\phi^{(1)}(\tau,\cdot) -\phi^{(2)}(\tau,\cdot)}\|_{\mathbf{L}^1}\d\tau \right),
\end{align}
for $t\in[0,\infty)$, where $C(t)$ depends on $t,\norm{f}_{\mathbf{L}^\infty},\norm{V'}_{\mathbf{L}^\infty},\norm{w}_{\mathbf{L}^\infty},\mathrm{TV}\left(\bar{k}^{(1)}\right)$, and \\ $ \sup\nolimits_{\tau\in[0,t]} \mathrm{TV}\left(\phi^{(1)}(\tau,\cdot)\right)$.
\end{lemma}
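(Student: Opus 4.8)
The plan is to follow the structure of Step 3 in the proof of \cref{thm:wellposedness_strong}, but to measure the discrepancy of the two solutions in the $\mathbf{L}^1$ norm rather than the $\mathbf{L}^\infty$ norm. The starting point is again the characteristic representation \eqref{eq:char_sol}, written for both data sets as in \eqref{eq:ch2_stab_calc_1}--\eqref{eq:ch2_stab_calc_2}. Since the data are now only of bounded variation, the cleanest route is to first derive the estimate for Lipschitz data — for which \cref{thm:wellposedness_strong} guarantees that \eqref{eq:char_sol} holds pointwise — and then recover the general BV case by approximating $\bar k^{(i)}$ and $\phi^{(i)}$ by Lipschitz functions converging in $\mathbf{L}^1$ with uniformly bounded total variation. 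This is legitimate precisely because the right-hand side of \eqref{eq:ch2_stab_L1} involves only $\mathbf{L}^1$ norms of the data differences and the total variations of $\bar k^{(1)}$ and $\phi^{(1)}$, all of which are stable under such an approximation.

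The analytic ingredient that replaces the pointwise Lipschitz bounds of \cref{thm:wellposedness_strong} is the translation estimate for BV functions, namely $\|g(\cdot+a)-g(\cdot+b)\|_{\mathbf{L}^1}\le|a-b|\,\mathrm{TV}(g)$, which is valid on $[0,\infty)$ because the characteristic shifts $\xi^{(i)}$ are nonnegative and nondecreasing (as $V\ge0$). Subtracting \eqref{eq:ch2_stab_calc_1} for $i=1,2$ and inserting the intermediate quantities $\bar k^{(1)}(\,\cdot+\xi^{(2)}(t))$ and $\phi^{(1)}(\tau,\,\cdot+\xi^{(2)}(t)-\xi^{(2)}(\tau))$, I would split each difference into a ``shift-mismatch'' part, in which only the arguments differ, and a ``data-mismatch'' part, in which only the superscripts differ. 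Taking the $\mathbf{L}^1$ norm in $x$, the shift-mismatch parts are controlled via the translation estimate by $\mathrm{TV}(\bar k^{(1)})\,|\xi^{(1)}(t)-\xi^{(2)}(t)|$ and by $\|f\|_{\mathbf{L}^\infty}\int_0^t\mathrm{TV}(\phi^{(1)}(\tau,\cdot))\,E(t,\tau)\,\d\tau$ with $E(t,\tau)$ as in \eqref{eq:E_t_tau}, while the data-mismatch parts contribute exactly $\|\bar k^{(1)}-\bar k^{(2)}\|_{\mathbf{L}^1}$ and $\|f\|_{\mathbf{L}^\infty}\int_0^t\|\phi^{(1)}(\tau,\cdot)-\phi^{(2)}(\tau,\cdot)\|_{\mathbf{L}^1}\,\d\tau$ (using the translation quasi-invariance of the $\mathbf{L}^1$ norm on the half-line).

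It then remains to control $E(t,\tau)$ and $|\xi^{(1)}(t)-\xi^{(2)}(t)|$ by the $\mathbf{L}^1$ discrepancy of the solutions. Here I would use \eqref{eq:ch2_stab_calc_2} together with the Lipschitz continuity of $V$, and crucially the H\"older pairing $|\int_0^\infty g(y)w(y)\d y|\le\|w\|_{\mathbf{L}^\infty}\|g\|_{\mathbf{L}^1}$, to obtain $E(t,\tau)\le\|V'\|_{\mathbf{L}^\infty}\|w\|_{\mathbf{L}^\infty}\int_\tau^t\|k^{(1)}(\tilde\tau,\cdot)-k^{(2)}(\tilde\tau,\cdot)\|_{\mathbf{L}^1}\,\d\tilde\tau$, and likewise for $|\xi^{(1)}(t)-\xi^{(2)}(t)|$. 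This is exactly where the factor $\|w\|_{\mathbf{L}^\infty}$ enters the constant $C(t)$, in contrast to \cref{thm:wellposedness_strong}, where pairing against $\|w\|_{\mathbf{L}^1}=1$ sufficed. The double-integral term $\int_0^t E(t,\tau)\,\d\tau$ can be handled by exchanging the order of integration, which produces an extra factor of $t$ and keeps the kernel integrable. Substituting these bounds collapses everything into the self-referential inequality
\begin{align*}
\|k^{(1)}(t,\cdot)-k^{(2)}(t,\cdot)\|_{\mathbf{L}^1}
&\le \|\bar k^{(1)}-\bar k^{(2)}\|_{\mathbf{L}^1}
+\|f\|_{\mathbf{L}^\infty}\int_0^t\|\phi^{(1)}(\tau,\cdot)-\phi^{(2)}(\tau,\cdot)\|_{\mathbf{L}^1}\,\d\tau\\
&\quad+C(1+t)\int_0^t\|k^{(1)}(\tau,\cdot)-k^{(2)}(\tau,\cdot)\|_{\mathbf{L}^1}\,\d\tau,
\end{align*}
after which a final application of Gronwall's inequality yields \eqref{eq:ch2_stab_L1}. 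The main obstacle, and essentially the only genuinely new point relative to \cref{thm:wellposedness_strong}, is to replace the Lipschitz-modulus arguments by the BV translation estimate and to correctly track the appearance of $\|w\|_{\mathbf{L}^\infty}$ in the nonlocal term; once the shift-mismatch contributions are recast as integrals of $\|k^{(1)}-k^{(2)}\|_{\mathbf{L}^1}$, the argument reduces to the same Gronwall structure as before.
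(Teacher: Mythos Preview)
Your proposal is correct and follows essentially the same approach as the paper: the same splitting into shift-mismatch and data-mismatch terms, the same BV translation estimate, the same bound on $E(t,\tau)$ via $\|V'\|_{\mathbf{L}^\infty}\|w\|_{\mathbf{L}^\infty}$, the same $C(1+t)$ inequality, and the same Gronwall conclusion. The only difference is that you make explicit the approximation of BV data by Lipschitz data to justify using \eqref{eq:char_sol}, whereas the paper simply invokes the representation \eqref{eq:ch2_stab_calc_1}--\eqref{eq:ch2_stab_calc_2} directly.
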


\begin{proof}
The starting point of the proof is the solution representation \eqref{eq:ch2_stab_calc_1}-\eqref{eq:ch2_stab_calc_2}.
Indeed, using the identity \eqref{eq:ch2_stab_calc_1}, we obtain 
\begin{align*}
    &\|{k^{(1)}(t,\cdot)-k^{(2)}(t,\cdot)}\|_{\mathbf{L}^1}\\
    \leq& \|{\bar{k}^{(1)}-\bar{k}^{(2)}}\|_{\mathbf{L}^1}
    + \int_0^\infty \left| \bar{k}^{(1)}\left(x+\xi^{(1)}(t)\right)-\bar{k}^{(1)}\left(x+\xi^{(2)}(t)\right) \right| \d x \\
    &+ \|{f}\|_{\mathbf{L}^\infty} \int_0^t \|{\phi^{(1)}(\tau,\cdot)-\phi^{(2)}(\tau,\cdot)}\|_{\mathbf{L}^1} \d\tau  \\
    &  + \|{f}\|_{\mathbf{L}^\infty} \int_0^t \int_0^\infty \left| \phi^{(1)}\left(\tau,x+\xi^{(1)}(t)-\xi^{(1)}(\tau)\right)\! -\! \phi^{(1)}\left(\tau,x+\xi^{(2)}(t)-\xi^{(2)}(\tau)\right) \right| \d x \d\tau \\
    \leq& \|{\bar{k}^{(1)}-\bar{k}^{(2)}}\|_{\mathbf{L}^1} + \mathrm{TV}\left(\bar{k}^{(1)}\right) E(t,0) + \|{f}\|_{\mathbf{L}^\infty} \int_0^t \|{\phi^{(1)}(\tau,\cdot)-\phi^{(2)}(\tau,\cdot)}\|_{\mathbf{L}^1} \d\tau \\
    &+ \norm{f}_{\mathbf{L}^\infty} \int_0^t \mathrm{TV}\left(\phi^{(1)}(\tau,\cdot)\right) E(t,\tau) \d\tau,
\end{align*}
where we have used the inequality $\int_0^\infty |u(x+s)-u(x)|\d x\leq s\cdot \mathrm{TV}(u)$ for any function $u=u(x)$ of bounded variation \cite[{pp. 267-269}]{brezis2011functional},
with $E(t,\tau)$ defined in \eqref{eq:E_t_tau}.
Then it follows from \eqref{eq:ch2_stab_calc_2} that
\begin{equation*}
    E(t,\tau) \leq \norm{V'}_{\mathbf{L}^\infty} \|{w}\|_{\mathbf{L}^\infty} \int_\tau^t \|{k^{(1)}(\tilde{\tau},\cdot)-k^{(2)}(\tilde{\tau},\cdot)}\|_{\mathbf{L}^1} \d\tilde{\tau}.
\end{equation*}
From the above two inequalities, we obtain
\begin{align*}
    \|{k^{(1)}(t,\cdot)-k^{(2)}(t,\cdot)}\|_{\mathbf{L}^1}
    \leq& \|{\bar{k}^{(1)}-\bar{k}^{(2)}}\|_{\mathbf{L}^1} + \|{f}\|_{\mathbf{L}^\infty} \int_0^t \|{\phi^{(1)}(\tau,\cdot)-\phi^{(2)}(\tau,\cdot)}\|_{\mathbf{L}^1}\d\tau \\
    &+ C(1+t) \int_0^t \|{k^{(1)}(\tau,\cdot)-k^{(2)}(\tau,\cdot)}\|_{\mathbf{L}^1} \d\tau,
\end{align*}
with the constant $C=C(\mathrm{TV}\left(\bar{k}^{(1)}\right), \sup_{\tau\in[0,t]} \mathrm{TV}\left(\phi^{(1)}(\tau,\cdot)\right), \|{f}\|_{\mathbf{L}^\infty}, \norm{V'}_{\mathbf{L}^\infty}, \|{w}\|_{\mathbf{L}^\infty})$.
By Gronwall's inequality, we deduce the stability estimate \eqref{eq:ch2_stab_L1}.
\end{proof}

Using \cref{lemm:forward_stability_L1}, we can now derive the existence, uniqueness, and stability of weak solutions to problem \eqref{eq:bathtub_model}--\eqref{eq:initial_condition}, where the weak solutions are in the following sense.
\begin{definition}\label{def:weak}
A function $k\in \mathbf{L}^1([0,\infty )\times [0,\infty);[0,\infty))$ is a weak solution to problem \eqref{eq:bathtub_model}--\eqref{eq:initial_condition} if
\begin{align*}
\int_0^\infty\int_0^\infty k (t,x) &\partial_t\varphi (t,x) - k (t,x) v(t) \partial_x \varphi(t,x) + f(t) \phi(t,x) \varphi(t,x) \d x\d t\\&+\int_0^\infty \bar k(x)\varphi (0,x)\d x= 0, 
\quad\mathrm{with} \quad v(t)=V \left( \int_0^\infty k(t,y) w(y) \d y \right),
\end{align*}
holds for all $\varphi \in \mathbf{C}^1_{\mathrm{c}}([0,\infty )\times(0,\infty))$.
\end{definition}

\begin{theorem}\label{thm:wellposedness_weak}
    Suppose that the initial data $\bar{k}\in\mathbf{L}^1([0,\infty);[0,\infty))$ is of bounded variation, that $\phi(t,\cdot)\in\mathbf{L}^1([0,\infty);[0,\infty))$ is of bounded variation for all $t\in[0,\infty)$ and its total variation is locally bounded on $t\in[0,\infty)$, and that \cref{assm:1} holds.
    Then there exists a unique weak solution $k\in\mathbf{L}^1([0,\infty)\times[0,\infty);[0,\infty))$ that satisfies  problem \eqref{eq:bathtub_model}-\eqref{eq:initial_condition} in the sense of \cref{def:weak}.
    Moreover, the stability estimate \eqref{eq:ch2_stab_L1} still holds.
\end{theorem}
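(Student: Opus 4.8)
The plan is to construct the weak solution as an $\mathbf{L}^1$ limit of the strong solutions provided by \cref{thm:wellposedness_strong}, using the $\mathbf{L}^1$ stability estimate of \cref{lemm:forward_stability_L1} to pass to the limit, and to obtain uniqueness by showing that every weak solution is forced to coincide with the characteristic representation \eqref{eq:ch2_stab_calc_1}--\eqref{eq:ch2_stab_calc_2}. First I would regularize the rough data by mollification: with a standard nonnegative mollifier $\rho_n$, set $\bar{k}_n=\bar{k}*\rho_n$ and $\phi_n(t,\cdot)=\phi(t,\cdot)*\rho_n$, extending the data to the left of $x=0$ by their boundary traces so as not to create spurious jumps. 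Since mollification preserves nonnegativity and does not increase the total variation, the $\bar{k}_n,\phi_n$ are nonnegative and Lipschitz in space, with $\mathrm{TV}(\bar{k}_n)\le\mathrm{TV}(\bar{k})$ and $\mathrm{TV}(\phi_n(t,\cdot))\le\mathrm{TV}(\phi(t,\cdot))$, the space Lipschitz constant (bounded by $\|\rho_n'\|_{\mathbf{L}^1}\mathrm{TV}(\phi(t,\cdot))$) locally bounded in $t$, and $\bar{k}_n\to\bar{k}$, $\phi_n(t,\cdot)\to\phi(t,\cdot)$ in $\mathbf{L}^1$. Here I use that a function of finite total variation on $[0,\infty)$ is automatically bounded, so that the hypotheses of \cref{thm:wellposedness_strong} are met and each $(\bar{k}_n,\phi_n)$ yields a strong solution $k_n$.

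For existence, I would apply \eqref{eq:ch2_stab_L1} to the pair $(k_n,k_m)$. The crucial point is that the constant $C(t)$ in \cref{lemm:forward_stability_L1} depends on the data only through $\mathrm{TV}(\bar{k}_n)$ and $\sup_{\tau\in[0,t]}\mathrm{TV}(\phi_n(\tau,\cdot))$, which are uniformly bounded; hence $C(t)$ is independent of $n,m$, and $\sup_{t\in[0,T]}\|k_n(t,\cdot)-k_m(t,\cdot)\|_{\mathbf{L}^1}$ is dominated by $\|\bar{k}_n-\bar{k}_m\|_{\mathbf{L}^1}+\int_0^T\|\phi_n(\tau,\cdot)-\phi_m(\tau,\cdot)\|_{\mathbf{L}^1}\,\d\tau\to0$. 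Thus $(k_n)$ is Cauchy in $\mathbf{C}([0,T];\mathbf{L}^1)$ for every $T$, with nonnegative limit $k\in\mathbf{L}^1$. To see that $k$ is a weak solution, I would pass to the limit in the weak identity satisfied by each $k_n$ (with velocity $v_n(t)=V(\int_0^\infty k_n(t,y)w(y)\,\d y)$). The linear terms converge by $\mathbf{L}^1$ convergence of $k_n,\phi_n,\bar{k}_n$ tested against bounded, compactly supported $\varphi$; the delicate term is the nonlinear flux $\int\!\int k_n v_n\,\partial_x\varphi$. Splitting $k_nv_n-kv=(k_n-k)v_n+k(v_n-v)$, the first piece is controlled by $\|v_n\|_{\mathbf{L}^\infty}\|k_n-k\|_{\mathbf{L}^1}\to0$ (with $\|v_n\|_{\mathbf{L}^\infty}$ bounded uniformly in $n$), while for the second I use that $w\in\mathbf{L}^\infty$ gives $\int k_n w\to\int k w$ and hence $v_n(t)\to v(t)$ pointwise and uniformly boundedly, so dominated convergence applies. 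This yields the identity of \cref{def:weak}.

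For uniqueness, the key observation is that once the velocity is frozen the equation is a linear transport equation with $x$-independent speed. Precisely, given any weak solution $k$, the a priori mass bound $\|k(t,\cdot)\|_{\mathbf{L}^1}\le\|\bar{k}\|_{\mathbf{L}^1}+\|f\|_{\mathbf{L}^\infty}\int_0^t\|\phi(\tau,\cdot)\|_{\mathbf{L}^1}\,\d\tau$ (obtained by testing the weak formulation) makes $v(t)=V(\int_0^\infty k(t,y)w(y)\,\d y)$ a bounded measurable function of $t$ alone, and $k$ is then a weak solution of $\partial_t k-v(t)\partial_x k=f\phi$. Changing to the moving frame $y=x+\xi(t)$ with $\xi(t)=\int_0^t v$ (equivalently, testing with $\varphi(t,x)=\psi(t,x+\xi(t))$ for $\psi\in\mathbf{C}^1_{\mathrm{c}}$), the operator $\partial_t-v(t)\partial_x$ becomes a pure time derivative, which forces $k$ to equal the characteristic representation \eqref{eq:ch2_stab_calc_1}--\eqref{eq:ch2_stab_calc_2}. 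Since every weak solution then satisfies this representation, the computation in the proof of \cref{lemm:forward_stability_L1} applies verbatim to two weak solutions with the same data and, via Gronwall's inequality, gives $k^{(1)}=k^{(2)}$; the same computation shows that the stability estimate \eqref{eq:ch2_stab_L1} persists for weak solutions (alternatively one passes to the limit in the estimate for the $k_n$).

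I expect the main obstacle to be the uniqueness step, specifically the rigorous verification that an arbitrary $\mathbf{L}^1$ weak solution must coincide with the characteristic representation. This requires the linear-transport uniqueness in the moving frame for merely bounded measurable $v(t)$, together with care that the nonlocal average $\int k w$, and hence $v$, are well defined and bounded on each $[0,T]$, and that the change of variables is justified at the level of the weak formulation. The existence half is comparatively routine once the uniform total-variation bounds are in place, the only genuine subtlety there being the passage to the limit in the nonlinear flux term described above.
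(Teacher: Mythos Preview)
Your proposal is correct and follows exactly the route the paper indicates: the paper does not spell out a proof of \cref{thm:wellposedness_weak} but simply states that it follows from \cref{lemm:forward_stability_L1}, and your mollification-plus-$\mathbf{L}^1$-Cauchy argument is precisely the standard way to make this rigorous. Your treatment of uniqueness via the characteristic representation \eqref{eq:ch2_stab_calc_1}--\eqref{eq:ch2_stab_calc_2} is likewise in line with the paper's implicit reasoning, since that representation is exactly what underlies both \cref{lemm:forward_stability_L1} and \cref{prop:estimate}.
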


Finally, we give stability estimates on the strong and weak solutions. These estimates can be easily obtained from \eqref{eq:char_xi}-\eqref{eq:char_sol}. Hence, we will omit the proofs.

\begin{proposition}
The following statements hold.\label{prop:estimate}
\begin{itemize}
\item[{\rm(i)}]    Under the conditions of \cref{thm:wellposedness_strong}, the strong solution $k$ to  problem \eqref{eq:bathtub_model}-\eqref{eq:initial_condition} satisfies
    \begin{align*}
        \|{k(t,\cdot)}\|_{\mathbf{L}^\infty} \leq& \|{\bar{k}}\|_{\mathbf{L}^\infty} + \sup\nolimits_{\tau\in[0,t]}\|{\phi}(\tau,\cdot)\|_{\mathbf{L}^\infty} \int_0^t f(\tau)\d\tau, \\
        \|{\partial_xk(t,\cdot)}\|_{\mathbf{L}^\infty} \leq& \|{\bar{k}'}\|_{\mathbf{L}^\infty} + \sup\nolimits_{\tau\in[0,t]}\|{\partial_x\phi(\tau,\cdot)}\|_{\mathbf{L}^\infty} \int_0^t f(\tau)\d\tau.
    \end{align*}
\item[{\rm(ii)}]
    Under the conditions of \cref{thm:wellposedness_weak}, the weak solution $k$ to problem \eqref{eq:bathtub_model}-\eqref{eq:initial_condition} satisfies
    \begin{align*}
        \norm{k(t,\cdot)}_{\mathbf{L}^1} \leq& \norm{\bar{k}}_{\mathbf{L}^1} + \sup\nolimits_{\tau \in [0,t]}\norm{\phi(\tau,\cdot)}_{\mathbf{L}^1} \int_0^t f(\tau)\d\tau, \\
        \mathrm{TV}\left(k(t,\cdot)\right) \leq& \mathrm{TV}\left(\bar{k}\right) + \sup\nolimits_{\tau \in [0,t]} \mathrm{TV} \left(\phi(\tau,\cdot)\right) \int_0^t f(\tau)\d\tau.
    \end{align*}

\end{itemize}
\end{proposition}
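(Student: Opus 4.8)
The plan is to read off all four bounds directly from the closed-form characteristic representation \eqref{eq:char_sol},
\[
k(t,x) = \bar{k}(x+\xi(t)) + \int_0^t f(\tau)\,\phi\!\left(\tau,x+\xi(t)-\xi(\tau)\right)\d\tau,
\]
together with the definition \eqref{eq:char_xi} of $\xi$. The single structural fact driving everything is that $V\geq0$ in \cref{assm:1}(ii) forces $\xi$ to be nondecreasing, so $\xi(t)\geq0$ and $\xi(t)-\xi(\tau)\geq0$ whenever $0\leq\tau\leq t$; thus every spatial shift in \eqref{eq:char_sol} is by a nonnegative amount. On the half-line $[0,\infty)$, such a shift leaves the $\mathbf{L}^\infty$ norm unchanged and can only decrease the $\mathbf{L}^1$ norm and the total variation, which is precisely what is needed.

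For part (i), I would take absolute values in \eqref{eq:char_sol} and use the triangle inequality: the first term is bounded pointwise by $\norm{\bar{k}}_{\mathbf{L}^\infty}$, and the integral by $\int_0^t f(\tau)\norm{\phi(\tau,\cdot)}_{\mathbf{L}^\infty}\d\tau\leq\sup_{\tau\in[0,t]}\norm{\phi(\tau,\cdot)}_{\mathbf{L}^\infty}\int_0^t f(\tau)\d\tau$, giving the first bound. For the second, since $\xi(t)$ and $\xi(\tau)$ do not depend on $x$, differentiating \eqref{eq:char_sol} in $x$ yields $\partial_x k(t,x)=\bar{k}'(x+\xi(t))+\int_0^t f(\tau)\,\partial_x\phi(\tau,x+\xi(t)-\xi(\tau))\d\tau$, and the identical argument produces the claimed estimate on $\norm{\partial_x k(t,\cdot)}_{\mathbf{L}^\infty}$.

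For part (ii), I would integrate $|k(t,x)|$ over $x\in[0,\infty)$. The change of variables $y=x+\xi(t)$ turns the initial-data term into $\int_{\xi(t)}^\infty|\bar{k}(y)|\d y\leq\norm{\bar{k}}_{\mathbf{L}^1}$; for the source term, Tonelli's theorem lets me exchange the order of integration, after which shifting each inner integral gives $\int_0^\infty|\phi(\tau,x+\xi(t)-\xi(\tau))|\d x\leq\norm{\phi(\tau,\cdot)}_{\mathbf{L}^1}$, and the supremum is pulled out as before. The total-variation bound follows the same template via subadditivity of $\mathrm{TV}$: the shifted initial-data term contributes at most $\mathrm{TV}(\bar{k})$, and for the source term I would use the dual characterization of $\mathrm{TV}$ to swap the supremum over test functions with the $\tau$-integral, yielding the generalized Minkowski inequality $\mathrm{TV}\!\left(\int_0^t g(\tau,\cdot)\d\tau\right)\leq\int_0^t\mathrm{TV}(g(\tau,\cdot))\d\tau$ with $g(\tau,\cdot)=f(\tau)\phi(\tau,\cdot+\xi(t)-\xi(\tau))$, and then bound each $\mathrm{TV}(\phi(\tau,\cdot+\xi(t)-\xi(\tau)))\leq\mathrm{TV}(\phi(\tau,\cdot))$. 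For rigor with rough data one may first establish the bounds for smooth approximations and pass to the limit using the $\mathbf{L}^1$ stability of \cref{lemm:forward_stability_L1}.

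The only step demanding any real care is the generalized Minkowski inequality for total variation in the source term; everything else is an immediate consequence of \eqref{eq:char_sol} and the nonnegativity of the shifts. This is why the bounds are ``easily obtained'' from \eqref{eq:char_xi}--\eqref{eq:char_sol} and the proofs can be omitted.
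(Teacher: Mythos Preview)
Your proposal is correct and follows exactly the approach the paper indicates: the authors explicitly omit the proof, stating only that ``these estimates can be easily obtained from \eqref{eq:char_xi}--\eqref{eq:char_sol},'' which is precisely the characteristic representation you exploit. Your identification of the nonnegativity of the shifts (from $V\geq0$) and the Minkowski-type inequality for total variation as the only nontrivial ingredient is apt and fills in what the paper leaves implicit.
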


\section{Inverse problem of recovering inflow rates}
\label{sec:inverse1}
In this section, we discuss an inverse source problem associated with \eqref{eq:bathtub_model}-\eqref{eq:initial_condition}.
Our goal is to recover the inflow rates $f(t)$ for $t\in[0,T]$, where $[0,T]$ is a predefined time horizon, from the lateral observation $k(t,0)$ at $x=0$ for $t\in[0,T]$. This observation represents the density of exiting vehicles. Throughout this section, we assume that the nonlocal kernel $w$, the velocity function $V$, the initial data $\bar{k}$, and the inflow distribution $\phi$ are known and satisfy the following conditions. Moreover, we use $\norm{\,\cdot\,}_{\mathbf{L}^p}$ for $p\in[1,\infty]$ as shorthand for the $\mathbf{L}^p$ norm on $[0,T]$, $[0,\infty)$, or $[0,T]\times[0,\infty)$, depending on the function's domain. A similar convention applies to $\norm{\,\cdot\,}_{\mathbf{W}^{1,p}}$ for $p\in[1,\infty]$.

\begin{assumption}\label{assm:2}
{\rm(i)} The nonlocal kernel $w$ is a constant function, i.e., $ w(y)=\frac1L\mathbf{1}_{[0,L]}(y)$, for a given constant $L>0$; {\rm(ii)} the velocity function $V$ is bounded and Lipschitz continuous on $[0,\infty)$ and satisfies $V(k)\geq0$ for all $k\geq0$; {\rm(iii)} the initial data $\bar{k}\in\mathbf{L}^\infty([0,\infty);[0,\infty))$ is $\mathbf{C}^2$ smooth and supported on $[0,L]$; and {\rm(iv)} the inflow distribution $\phi\in\mathbf{L}^\infty([0,T]\times[0,\infty);[0,\infty))$ is $\mathbf{C}^2$ smooth, supported on $[0,T]\times[0,L]$, and satisfies $\int_0^\infty \phi(t,x)\d x=1$ and $\phi(t,0)>0$ for all $t\in[0,T]$. 
\end{assumption}

We shall also assume that the exact $f$ belongs to $\mathbf{L}^\infty([0,T];[0,\infty))$. Then we have the following result for the direct problem, which is a direct consequence of \cref{thm:wellposedness_strong} and \cref{prop:estimate}.
\begin{proposition}\label{prop:ch3_forward_wellposedness}
Suppose that $f\in\mathbf{L}^\infty ([0,T];[0,\infty))$ and \cref{assm:2} holds. Then problem \eqref{eq:bathtub_model}-\eqref{eq:initial_condition} has a unique strong solution and the solution satisfies:
\begin{align*}
    \sup\nolimits_{t\in[0,T]}\norm{k(t,\cdot)}_{\mathbf{L}^1} \leq \norm{\bar{k}}_{\mathbf{L}^1} + T \norm{f}_{\mathbf{L}^\infty} \;\;\; \mathrm{and} \;\;\;
    \norm{k}_{\mathbf{W}^{1,\infty}} \leq C \left( \norm{\bar{k}}_{\mathbf{W}^{1,\infty}} + \norm{\phi}_{\mathbf{W}^{1,\infty}} \right),
\end{align*}
where the constant $C$ depends on $T,\norm{f}_{\mathbf{L}^\infty}$, and $\norm{V}_{\mathbf{L}^\infty}$.
\end{proposition}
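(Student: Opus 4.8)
The plan is to read the statement off as a direct corollary of the forward theory already in place, namely \cref{thm:wellposedness_strong} and \cref{prop:estimate}. The first step is to check that \cref{assm:2} is strictly stronger than \cref{assm:1} and supplies the regularity demanded by \cref{thm:wellposedness_strong}. Indeed, $w=\frac1L\mathbf{1}_{[0,L]}$ lies in $\mathbf{L}^1\cap\mathbf{L}^\infty$, is nonnegative, and integrates to $1$, so \cref{assm:1}(i) holds; parts (ii)--(iii) of \cref{assm:1} follow from \cref{assm:2}(ii) and the hypothesis $f\in\mathbf{L}^\infty([0,T];[0,\infty))$. Since $\bar k$ is $\mathbf{C}^2$ with compact support in $[0,L]$ it is Lipschitz continuous, and since $\phi$ is $\mathbf{C}^2$ with compact support in $[0,T]\times[0,L]$ it is Lipschitz in space with a Lipschitz constant that is locally bounded in $t$. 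Thus the hypotheses of \cref{thm:wellposedness_strong} are met, and we obtain a unique Lipschitz continuous strong solution $k$ on $[0,\infty)\times[0,\infty)$, hence in particular on $[0,T]\times[0,\infty)$.

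For the $\mathbf{L}^1$ bound I would invoke \cref{prop:estimate}(ii). Note first that the compact support and $\mathbf{C}^2$ regularity of $\bar k$ and $\phi(t,\cdot)$ make them of bounded variation and of class $\mathbf{L}^1$ with locally bounded total variation in $t$, so the hypotheses of \cref{thm:wellposedness_weak} hold as well; by uniqueness the strong solution coincides with the weak solution, and \cref{prop:estimate}(ii) applies to it. The only extra input is the mass normalization $\int_0^\infty\phi(\tau,x)\,\d x=1$ from \cref{assm:2}(iv), which gives $\sup_{\tau\in[0,t]}\norm{\phi(\tau,\cdot)}_{\mathbf{L}^1}=1$; combined with $\int_0^t f(\tau)\,\d\tau\le T\norm{f}_{\mathbf{L}^\infty}$, the estimate of \cref{prop:estimate}(ii) becomes exactly $\norm{k(t,\cdot)}_{\mathbf{L}^1}\le\norm{\bar k}_{\mathbf{L}^1}+T\norm{f}_{\mathbf{L}^\infty}$, uniformly in $t\in[0,T]$.

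For the $\mathbf{W}^{1,\infty}$ bound, \cref{prop:estimate}(i) already controls $\norm{k}_{\mathbf{L}^\infty}$ and $\norm{\partial_x k}_{\mathbf{L}^\infty}$ on $[0,T]\times[0,\infty)$ by $\norm{\bar k}_{\mathbf{W}^{1,\infty}}$ and $\norm{\phi}_{\mathbf{W}^{1,\infty}}$, up to a factor depending on $T$ and $\norm{f}_{\mathbf{L}^\infty}$ (again using $\int_0^t f\le T\norm{f}_{\mathbf{L}^\infty}$). The remaining ingredient is the bound on $\norm{\partial_t k}_{\mathbf{L}^\infty}$, which I would extract directly from \eqref{eq:bathtub_model}: writing $\partial_t k = V\!\left(\int_0^\infty k(t,y)\,w(y)\,\d y\right)\partial_x k + f\phi$ and taking $\mathbf{L}^\infty$ norms yields $\norm{\partial_t k}_{\mathbf{L}^\infty}\le\norm{V}_{\mathbf{L}^\infty}\norm{\partial_x k}_{\mathbf{L}^\infty}+\norm{f}_{\mathbf{L}^\infty}\norm{\phi}_{\mathbf{L}^\infty}$. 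This is exactly where the boundedness of $V$ in \cref{assm:2}(ii) (stronger than \cref{assm:1}(ii)) enters, and it accounts for the stated dependence of $C$ on $\norm{V}_{\mathbf{L}^\infty}$. Summing the three contributions gives $\norm{k}_{\mathbf{W}^{1,\infty}}\le C\bigl(\norm{\bar k}_{\mathbf{W}^{1,\infty}}+\norm{\phi}_{\mathbf{W}^{1,\infty}}\bigr)$ with $C=C(T,\norm{f}_{\mathbf{L}^\infty},\norm{V}_{\mathbf{L}^\infty})$.

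I expect no genuine analytical obstacle here, since the result is a bookkeeping corollary of the forward theory; the only points requiring care are (a) recognizing that the mass normalization $\int_0^\infty\phi\,\d x=1$ is what produces the clean constant in the $\mathbf{L}^1$ estimate, and (b) obtaining the time-derivative bound by substituting back into the equation rather than from the characteristic representation, so that only the cheap quantity $\norm{V}_{\mathbf{L}^\infty}$ (and not, say, $\norm{V'}_{\mathbf{L}^\infty}$) appears in the final constant.
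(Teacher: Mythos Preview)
Your proposal is correct and follows exactly the route the paper indicates: the paper states (without giving details) that the proposition ``is a direct consequence of \cref{thm:wellposedness_strong} and \cref{prop:estimate},'' and your write-up supplies precisely those details, including the use of the mass normalization $\int_0^\infty\phi\,\d x=1$ for the $\mathbf{L}^1$ bound and the substitution back into \eqref{eq:bathtub_model} to control $\partial_t k$ via $\norm{V}_{\mathbf{L}^\infty}$.
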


\subsection{Reformulation as an integral equation}
In this subsection, we develop the key tool for analyzing the concerned inverse source problem. We define the following quantity:
\[ \delta(t)\doteq\int_0^\infty k(t,x) \d x, \quad t\in[0,T]. \]
Physically, $\delta(t)$ represents the total number of vehicles at time $t$.
Directly from the model \eqref{eq:bathtub_model}, we derive that the solution $k$ is supported on $[0,T]\times[0,L]$ and obtain the following nonlinear ODE for $\delta$:
\begin{align}\label{eq:inv_source_1}
    \frac{\d\delta(t)}{\d t} = f(t) - V \left( \frac{\delta(t)}{L} \right) k(t,0).
\end{align}
Given $\delta(t)$, setting $x=0$ in the solution representation formula \eqref{eq:char_sol} yields
\begin{align}\label{eq:inv_source_2}
    k(t,0) - \bar{k}(\xi(t)) = \int_0^t f(\tau) \phi(\tau, \xi(t)-\xi(\tau)) \d\tau, \quad t\in[0,T],
\end{align}
with
\begin{align}\label{eq:inv_source_3}
    \xi(t) = \int_0^t V \left( \frac{\delta(\tau)}{L} \right) \d\tau, \quad t\in[0,T].
\end{align}
By substituting \eqref{eq:inv_source_1} into \eqref{eq:inv_source_2} (i.e., to eliminate $f$), we obtain the following integro-differential equation: for $t\in [0,T]$
\begin{align}\label{eq:inv_source_4}
    &k(t,0) - \bar{k}(\xi(t)) = \int_0^t \left( \frac{\d\delta(\tau)}{\d\tau} + V \left(\frac{\delta(\tau)}{L}\right) k(\tau,0) \right) \phi(\tau, \xi(t)-\xi(\tau)) \d\tau.
\end{align}
Once we have solved the quantity $\delta(t)$ for $t\in[0,T]$ from \eqref{eq:inv_source_3}-\eqref{eq:inv_source_4}, we can recover $f(t)$ for $t\in[0,T]$ using the defining identity \eqref{eq:inv_source_1}, i.e., differentiating the recovered $\delta$.

Next we reformulate \eqref{eq:inv_source_3}-\eqref{eq:inv_source_4} by defining 
\begin{equation*}
\bar{\delta} \doteq \int_0^\infty \bar{k}(x) \d x\quad \mbox{and}\quad v^{\delta}(t) \doteq V\left(\frac{\delta(t)}{L}\right).
\end{equation*}
By applying integration by parts to \eqref{eq:inv_source_4}, we obtain
\begin{align*}
    k(t,0) - \bar{k}(\xi(t)) =& \delta(t)\phi(t,0) - \bar{\delta}\phi(0,\xi(t)) - \int_0^t \delta(\tau)\partial_t\phi(\tau, \xi(t)-\xi(\tau)) \d\tau \\
    &+ \int_0^t v^{\delta}(\tau) \left( k(\tau,0) \phi (\tau, \xi(t)-\xi(\tau)) + \delta(\tau) \partial_x\phi(\tau, \xi(t)-\xi(\tau)) \right) \d\tau. 
\end{align*}
Consequently,
\begin{align}
    \delta(t) =& \frac1{\phi(t,0)} \left( k(t,0) - \bar{k} \left( \int_0^t v^{\delta}(\tau)\d\tau \right) + \bar{\delta}\phi\left( 0, \int_0^t v^{\delta}(\tau)\d\tau \right) \right) \nonumber\\
    & - \frac1{\phi(t,0)} \int_0^t v^{\delta}(\tau) k(\tau,0) \phi \left( \tau, \int_\tau^t v^{\delta}(\tilde{\tau})\d\tilde{\tau} \right) + v^{\delta}(\tau) \delta(\tau) \partial_x\phi \left( \tau, \int_\tau^t v^{\delta}(\tilde{\tau})\d\tilde{\tau} \right) \d\tau  \notag \\
    & + \frac1{\phi(t,0)} \int_0^t \delta(\tau) \partial_t\phi \left( \tau, \int_\tau^t v^{\delta}(\tilde{\tau})\d\tilde{\tau} \right) \d\tau. \label{eq:ch3_volterra}
\end{align}
This is a nonlinear Volterra integral equation of the second kind for the unknown $\delta$, and it lays the foundation for the analysis of the considered inverse source problem.

\begin{remark}
The above derivation relies on the specific choice of the nonlocal kernel $w(y)=\frac1L\mathbf{1}_{[0,L]}(y)$ and the condition that both $\bar{k}$ and $\phi$ vanish for $x>L$. Here, $L$ can be interpreted as the maximum trip distance, and the direct problem is indeed solved on the finite domain $[0,T]\times[0,L]$. Only in this case the velocity field $v=V(\int_0^\infty k(t,y)w(y)\d y)=V(\delta(t)/L)$ is a function of $\delta(t)$, enabling us to write down the ODE \eqref{eq:inv_source_1} and reformulate the inverse problem as a Volterra integral equation \eqref{eq:ch3_volterra}, and moreover,
equations \eqref{eq:inv_source_1}-\eqref{eq:inv_source_3} give rise to the following integro-differential equation for $\delta$:
\begin{equation*}
    \frac{\d\delta(t)}{\d t} = f(t) - V \left( \frac{\delta(t)}{L} \right) \left( \bar{k}\left( \int_0^t V \left( \frac{\delta(\tau)}{L} \right) \d\tau \right) + \int_0^t f(\tau) \phi\left(\tau, \int_\tau^t V \left( \frac{\delta(\tilde{\tau})}{L} \right) \d\tilde{\tau} \right) \d \tau \right) ,
\end{equation*}
which is also referred to as the trip-based model using the network density-speed relation; see the discussions in \cite{laval2023effect}.
\end{remark}

\subsection{Well-posedness of the inverse problem}\label{sec:math_analysis}
Now we establish the existence, uniqueness, and stability of solutions to the inverse problem that is reformulated as the Volterra integral equation \eqref{eq:ch3_volterra}. The following theorem presents the main result of this section.
The analysis employs the method of successive approximations, a standard tool for analyzing Volterra integral equations of the second kind \cite{Lamm:2000}. In the theorem, $\phi_{\min,0}\doteq\min_{t\in[0,T]}\phi(t,0)$ is a positive constant.
\begin{theorem}\label{thm:inverse_source}
Let \cref{assm:2} be fulfilled. Then for any observational data $k(\cdot,0)\in\mathbf{W}^{1,\infty}([0,T])$, there exists a unique $\delta\in\mathbf{W}^{1,\infty}([0,T])$ satisfying the Volterra integral equation \eqref{eq:ch3_volterra} and a unique $f\in\mathbf{L}^\infty([0,T])$ given by \eqref{eq:inv_source_1}. Moreover, we have the following stability estimates:
\begin{align}
    \|{\delta - \tilde{\delta}}\|_{\mathbf{L}^\infty} \leq& C_\delta \|{k(\cdot,0)-\tilde{k}(\cdot,0)}\|_{\mathbf{L}^\infty}, \label{eq:formal_stab_estm_delta}\\
    \|{f - \tilde{f}}\|_{\mathbf{L}^\infty} \leq& C_f \|{k(\cdot,0)-\tilde{k}(\cdot,0)}\|_{\mathbf{W}^{1,\infty}}, \label{eq:formal_stab_estm_f}
\end{align}
where the constant $C_\delta$ depends on $T,L,V,\bar{\delta},\phi_{\mathrm{min},0}^{-1},\|{k(\cdot,0)}\|_{\mathbf{L}^\infty},\|{\phi}\|_{\mathbf{W}^{2,\infty}},\|{\bar{k}}\|_{\mathbf{W}^{1,\infty}}$, and the constant $C_f$ depends on $T,L,V,\bar{\delta},\phi_{\mathrm{min},0}^{-1},\|{k(\cdot,0)}\|_{\mathbf{L}^\infty},\|{\phi}\|_{\mathbf{W}^{2,\infty}},\|{\bar{k}}\|_{\mathbf{W}^{2,\infty}}$.
\end{theorem}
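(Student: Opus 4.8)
The plan is to write the right-hand side of the Volterra equation \eqref{eq:ch3_volterra} as a nonlinear operator $\mathcal{T}$, so that the equation reads $\delta=\mathcal{T}[\delta]$, and to solve it by the method of successive approximations on $X\doteq\mathbf{L}^\infty([0,T])$. First I would check that $\mathcal{T}$ maps $X$ into itself: by \cref{assm:2}(iv) the prefactor $1/\phi(t,0)$ is bounded by $\phi_{\min,0}^{-1}$; by \cref{assm:2}(ii) the velocity $v^{\delta}(\tau)=V(\delta(\tau)/L)$ is bounded by $\|V\|_{\mathbf{L}^\infty}$, so the velocity integrals $\int_0^t v^{\delta}$ and $\int_\tau^t v^{\delta}$ range in $[0,T\|V\|_{\mathbf{L}^\infty}]$; and $\bar{k},\phi,\partial_x\phi,\partial_t\phi$ are bounded by \cref{assm:2}(iii)--(iv). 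Together with $k(\cdot,0)\in\mathbf{W}^{1,\infty}([0,T])$ and a bounded $\delta$, every term of $\mathcal{T}[\delta]$ is then bounded, and one checks similarly that $\mathcal{T}[\delta]$ is continuous in $t$.

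The core is a Volterra-type Lipschitz estimate. For $\delta_1,\delta_2\in X$, the Lipschitz continuity of $V$ gives $|v^{\delta_1}(\tau)-v^{\delta_2}(\tau)|\leq C|\delta_1(\tau)-\delta_2(\tau)|$, hence $|\int_\tau^t(v^{\delta_1}-v^{\delta_2})|\leq C\int_0^t|\delta_1-\delta_2|$. Since $\delta$ enters the arguments of $\bar{k},\phi,\partial_x\phi,\partial_t\phi$ only through these velocity integrals, the Lipschitz continuity of those functions --- which is precisely where $\phi\in\mathbf{W}^{2,\infty}$ (so that $\partial_x\phi,\partial_t\phi$ are Lipschitz) and $\bar{k}\in\mathbf{W}^{1,\infty}$ are used --- turns each difference of arguments into a factor $\int_0^t|\delta_1-\delta_2|$, while the explicit occurrences of $\delta(\tau)$ and $v^{\delta}(\tau)$ as multipliers are handled directly. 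Because every occurrence of $\delta$ sits under the time integral, I obtain $|\mathcal{T}[\delta_1](t)-\mathcal{T}[\delta_2](t)|\leq C\int_0^t|\delta_1(\tau)-\delta_2(\tau)|\,\d\tau$. Iterating yields $|\mathcal{T}^n[\delta_1](t)-\mathcal{T}^n[\delta_2](t)|\leq \tfrac{(Ct)^n}{n!}\|\delta_1-\delta_2\|_{\mathbf{L}^\infty}$, so $\mathcal{T}^n$ is a contraction for $n$ large; the contraction mapping principle applied to $\mathcal{T}^n$ then gives a unique fixed point $\delta\in X$.

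To upgrade the regularity to $\delta\in\mathbf{W}^{1,\infty}([0,T])$, the key observation is that in $\mathcal{T}[\delta]$ the unknown appears only algebraically --- pointwise, as an integrand, or inside a velocity integral --- and is never differentiated; the only source of a $t$-derivative is the explicit $t$-dependence together with the upper limits $\int_0^t v^{\delta}$, whose $t$-derivative is the bounded quantity $v^{\delta}(t)$. Differentiating \eqref{eq:ch3_volterra} in $t$ (each integral contributing a boundary term at $\tau=t$, where the inner velocity integral vanishes, plus an integral of the $t$-derivative of the kernel) therefore produces a function bounded in terms of $\|\delta\|_{\mathbf{L}^\infty}$ and $\|k(\cdot,0)\|_{\mathbf{W}^{1,\infty}}$, but involving no derivative of $\delta$. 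As $\delta=\mathcal{T}[\delta]$, this shows $\delta'\in\mathbf{L}^\infty$, i.e.\ $\delta\in\mathbf{W}^{1,\infty}([0,T])$, and then the defining identity \eqref{eq:inv_source_1} gives $f=\delta'+V(\delta/L)k(\cdot,0)\in\mathbf{L}^\infty([0,T])$, unique because $\delta$ is.

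Finally, the stability estimates reuse the same machinery for two data $k(\cdot,0),\tilde k(\cdot,0)$ with solutions $\delta,\tilde\delta$. Writing $\mathcal{T}_k,\mathcal{T}_{\tilde k}$ for the operators carrying the two data, I split $|\delta(t)-\tilde\delta(t)|\leq|\mathcal{T}_k[\delta](t)-\mathcal{T}_k[\tilde\delta](t)|+|\mathcal{T}_k[\tilde\delta](t)-\mathcal{T}_{\tilde k}[\tilde\delta](t)|$: the first term is $\leq C\int_0^t|\delta-\tilde\delta|$ by the Lipschitz estimate, and the second collects the data difference and is $\leq C\|k(\cdot,0)-\tilde k(\cdot,0)\|_{\mathbf{L}^\infty}$, so Gronwall's inequality yields \eqref{eq:formal_stab_estm_delta}. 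For \eqref{eq:formal_stab_estm_f}, I differentiate and bound $\|\delta'-\tilde\delta'\|_{\mathbf{L}^\infty}$ by the same direct formula; this introduces $\tfrac{\d}{\d t}(k(\cdot,0)-\tilde k(\cdot,0))$, hence the $\mathbf{W}^{1,\infty}$ data norm, and --- because the differentiated arguments produce $\bar{k}'$ and $\partial_x\phi,\partial_t\phi$ whose differences at shifted arguments must again be controlled --- the one-higher regularities $\|\bar{k}\|_{\mathbf{W}^{2,\infty}}$ and $\|\phi\|_{\mathbf{W}^{2,\infty}}$ enter the constant. Combining this with the Lipschitz continuity of $V$ in the algebraic term $V(\delta/L)k(\cdot,0)$ and the bound on $\|\delta-\tilde\delta\|_{\mathbf{L}^\infty}$ already established gives \eqref{eq:formal_stab_estm_f}. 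I expect the main obstacle to be the bookkeeping in the Lipschitz estimate --- tracking how $\delta$ enters nonlinearly through the velocity integrals inside the nonlocal arguments of $\phi$ and its derivatives --- since this is what both forces the stated $\mathbf{W}^{2,\infty}$ regularity and couples all the terms; the regularity lift to $\mathbf{W}^{1,\infty}$, though conceptually the subtle point, becomes routine once one notices the purely algebraic dependence of $\mathcal{T}$ on $\delta$.
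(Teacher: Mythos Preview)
Your plan follows the same route as the paper --- Picard iteration for existence, differentiation of the fixed-point identity for the $\mathbf{W}^{1,\infty}$ upgrade, and Gronwall for stability --- but one step is missing, and as written the existence argument does not close. The Volterra-type Lipschitz bound $|\mathcal{T}[\delta_1](t)-\mathcal{T}[\delta_2](t)|\leq C\int_0^t|\delta_1-\delta_2|$ does hold, but the ``constant'' $C$ depends on $\|\delta_1\|_{\mathbf{L}^\infty}$ and $\|\delta_2\|_{\mathbf{L}^\infty}$. The culprit is the pair of products $\delta(\tau)\,\partial_x\phi\big(\tau,\int_\tau^t v^{\delta}\big)$ and $\delta(\tau)\,\partial_t\phi\big(\tau,\int_\tau^t v^{\delta}\big)$ in \eqref{eq:ch3_volterra}: when you telescope the difference and freeze the explicit factor $\delta$ while varying the velocity integral inside $\partial_x\phi$ (resp.\ $\partial_t\phi$), you pick up a factor $\|\delta\|_{\mathbf{L}^\infty}\|\partial_{xx}^2\phi\|_{\mathbf{L}^\infty}$ (resp.\ $\|\delta\|_{\mathbf{L}^\infty}\|\partial_{tx}^2\phi\|_{\mathbf{L}^\infty}$). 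Hence $\mathcal{T}^n$ is \emph{not} a contraction on all of $X=\mathbf{L}^\infty([0,T])$, and your iterated estimate $|\mathcal{T}^n[\delta_1](t)-\mathcal{T}^n[\delta_2](t)|\leq (Ct)^n/n!\,\|\delta_1-\delta_2\|_{\mathbf{L}^\infty}$ cannot be written with a single $C$ before the iterates are known to be uniformly bounded.

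The paper supplies exactly this missing ingredient as its first step: starting from $\delta_0\equiv0$, $\delta_{n+1}=\mathcal{T}[\delta_n]$, one derives $|\delta_{n+1}(t)|\leq\phi_{\min,0}^{-1}\big(C_1+C_2\int_0^t|\delta_n|\big)$ and compares with the function $W(t)=\phi_{\min,0}^{-1}C_1\,e^{C_2t/\phi_{\min,0}}$, which solves $W(t)=\phi_{\min,0}^{-1}\big(C_1+C_2\int_0^t W\big)$, obtaining $|\delta_n(t)|\leq W(t)$ for all $n$ by induction. With this uniform bound the Lipschitz constant along the Picard sequence becomes uniform, and then $|\delta_{n+1}(t)-\delta_n(t)|\leq C^n t^n/n!$ follows as you intended. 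Once you insert this a priori step, the remainder of your proposal --- the regularity lift via differentiating \eqref{eq:ch3_volterra} and the two stability estimates via Gronwall --- matches the paper's argument; for uniqueness and stability the issue does not recur, since there $\delta,\tilde\delta$ are fixed solutions whose $\mathbf{L}^\infty$ norms are already controlled by the data through the same bound.
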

\begin{proof}
We divide the lengthy proof into four steps.\\
\textbf{(Step 1)} We show the existence of solutions. Let $\{\delta_n\}_{n=0}^\infty$ be the sequence produced by the successive approximations:
\begin{align}
    \delta_0(t) =& 0, \notag \\
    \delta_{n+1}(t) =& \frac1{\phi(t,0)} \left( k(t,0) - \bar{k} \left( \int_0^t v^{\delta_n}(\tau)\d\tau \right) + \bar{\delta}\phi\left( 0, \int_0^t v^{\delta_n}(\tau)\d\tau \right) \right) \notag \\
    & - \frac1{\phi(t,0)} \int_0^t v^{\delta_n}(\tau) k(\tau,0) \phi \left( \tau, \int_\tau^t v^{\delta_n}(\tilde{\tau})\d\tilde{\tau} \right)\notag\\
    &+v^{\delta_n}(\tau) \delta_n(\tau) \partial_x\phi \left( \tau, \int_\tau^t v^{\delta_n}(\tilde{\tau})\d\tilde{\tau} \right) \d\tau \notag \\
    & + \frac1{\phi(t,0)} \int_0^t \delta_n(\tau) \partial_t\phi \left( \tau, \int_\tau^t v^{\delta_n}(\tilde{\tau})\d\tilde{\tau} \right) \d\tau, \quad n\geq0. \label{eq:ch3_successive}
\end{align}
We first give a uniform $\mathbf{L}^\infty$ bound on the sequence $\{\delta_n\}_{n=0}^\infty$.
From the identity \eqref{eq:ch3_successive}, we deduce
\begin{align*}
    |\delta_{n+1}(t)| \leq \frac1{\phi_{\mathrm{min},0}} \left( C_1 + C_2\int_0^t |\delta_n(\tau)| \d\tau \right), \quad \quad t\in[0,T],\,n\geq0,
\end{align*}
where the constants $C_1=\norm{k(\cdot,0)}_{\mathbf{L}^\infty} + \norm{\bar{k}}_{\mathbf{L}^\infty} + \bar{\delta}\norm{\phi}_{\mathbf{L}^\infty} + T\norm{V}_{\mathbf{L}^\infty}\norm{k(\cdot,0)}_{\mathbf{L}^\infty}\norm{\phi}_{\mathbf{L}^\infty}$ and $C_2=\norm{V}_{\mathbf{L}^\infty}\norm{\partial_x\phi}_{\mathbf{L}^\infty} + \norm{\partial_t\phi}_{\mathbf{L}^\infty}$.
Let $W(t)=\frac{C_1}{\phi_{\mathrm{min},0}} e^{\frac{C_2}{\phi_{\mathrm{min},0}}t}$, which satisfies the following integral equation for $t\in[0,T]$:
\begin{align*}
    W(t) = \frac1{\phi_{\mathrm{min},0}} \left( C_1 + C_2\int_0^t W(\tau) \d\tau \right).
\end{align*}
Then an induction argument gives $|\delta_n(t)|\leq W(t)$ for all $n\geq0$ and $t\in[0,T]$.
This leads to the following uniform $\mathbf{L}^\infty$ bound on the sequence $\{\delta_n\}_{n=0}^\infty$: 
\begin{align}\label{eq:ch3_delta_Linf_bound}
    \sup\nolimits_{n\geq0} \norm{\delta_n}_{\mathbf{L}^\infty} \leq C_3 = C_3\left( T,V,\bar{\delta},\phi_{\mathrm{min},0}^{-1},\norm{k(\cdot,0)}_{\mathbf{L}^\infty},\norm{\bar{k}}_{\mathbf{L}^\infty},\norm{\phi}_{\mathbf{W}^{1,\infty}} \right).
\end{align}
\textbf{(Step 2)} We bound the difference $\delta_{n+1} - \delta_n$. By \eqref{eq:ch3_successive}, for $t\in[0,T]$ and $n\geq1$, direction computation gives:
\begin{align*}
    &|\delta_{n+1}(t) - \delta_n(t)| \\
    \leq& \frac{1}{\phi(t,0)} \left[  \left( \norm{\bar{k}'}_{\mathbf{L}^\infty} + \bar{\delta}\norm{\partial_x\phi}_{\mathbf{L}^\infty} + \norm{k(\cdot,0)}_{\mathbf{L}^\infty}\norm{\phi}_{\mathbf{L}^\infty} \right) \int_0^t \left| v^{\delta_n}(\tau) - v^{\delta_{n-1}}(\tau) \right| \d\tau \right. \\
    & + \left(  \left( \norm{V}_{\mathbf{L}^\infty} + \frac{\norm{V'}_{\mathbf{L}^\infty}}{L}\norm{\delta_n}_{\mathbf{L}^\infty} \right) \norm{\partial_x\phi}_{\mathbf{L}^\infty} + \norm{\partial_t\phi}_{\mathbf{L}^\infty} \right) \int_0^t \left| \delta_n(\tau) - \delta_{n-1}(\tau) \right| \d\tau \\
    & + \norm{V}_{\mathbf{L}^\infty}\norm{k(\cdot,0)}_{\mathbf{L}^\infty}\norm{\partial_x\phi}_{\mathbf{L}^\infty} \int_0^t\int_\tau^t \left| v^{\delta_n}(\tilde{\tau}) - v^{\delta_{n-1}}(\tilde{\tau}) \right| \d\tilde{\tau}\d\tau \\
    & + \left. \norm{\delta_n}_{\mathbf{L}^\infty} \left(  \norm{V}_{\mathbf{L}^\infty} \norm{\partial^2_{xx}\phi}_{\mathbf{L}^\infty} + \norm{\partial^2_{tx}\phi}_{\mathbf{L}^\infty} \right) \int_0^t\int_\tau^t \left| v^{\delta_n}(\tilde{\tau}) - v^{\delta_{n-1}}(\tilde{\tau}) \right| \d\tilde{\tau}\d\tau \right] \\
    \leq& C_4 \left( \int_0^t (1+\tau) \left| v^{\delta_n}(\tau) - v^{\delta_{n-1}}(\tau) \right| \d\tau + \int_0^t |\delta_n(\tau) - \delta_{n-1}(\tau)| \d\tau \right) \\
    \leq& C_4\left((1+T)\frac{\norm{V'}_{\mathbf{L}^\infty}}{L}+1\right)  \int_0^t\left| \delta_n(\tau) -\delta_{n-1}(\tau) \right| \d\tau 
   = C_5 \int_0^t |\delta_n(\tau) - \delta_{n-1}(\tau)| \d\tau,
\end{align*}
where the constant $C_5=C_5\left(T,L,V,\bar{\delta},\phi_{\mathrm{min,0}}^{-1},\norm{k(\cdot,0)}_{\mathbf{L}^\infty},\norm{\bar{k}}_{\mathbf{W}^{1,\infty}},\norm{\phi}_{\mathbf{W}^{2,\infty}}\right)$, and the estimate \eqref{eq:ch3_delta_Linf_bound} has been used.
Moreover, the estimate \eqref{eq:ch3_delta_Linf_bound} gives: 
\begin{equation*}
    |\delta_1(t) - \delta_0(t)| \leq \norm{\delta_1}_{\mathbf{L}^\infty} \leq C_3,\quad t\in[0,T].
\end{equation*}
Then by mathematical induction, we deduce
\begin{align}
    |\delta_{n+1}(t) - \delta_n(t)| \leq C_3C_5^n\frac{t^n}{n!} , \quad t\in[0,T],\,n\geq0.
\end{align}
Therefore, the infinite series $\sum_{n=0}^\infty (\delta_{n+1}(t)-\delta_{n}(t))$ converge uniformly for $t\in[0,T]$.
Thus, the sequence $\{\delta_n\}_{n=0}^\infty$ converges uniformly to some $\delta\in\mathbf{L}^\infty([0,T])$. Furthermore, it is direct to verify that the limit $\delta$ satisfies the integral equation \eqref{eq:ch3_volterra} pointwise.\\
\textbf{(Step 3)} We obtain the recovery formula of $f$ from $\delta$. Indeed, by differentiating the identity \eqref{eq:ch3_volterra}, we get:
\begin{align}
    \frac{\d\delta(t)}{\d t} =& - v^\delta(t) k(t,0) - \frac{v^\delta(t)\delta(t) \partial_x\phi(t,0)}{\phi(t,0)}\nonumber\\
     &- \frac{v^\delta(t)}{\phi(t,0)} \int_0^t v^{\delta}(\tau) k(\tau,0) \partial_x\phi \left( \tau, \int_\tau^t v^{\delta}(\tilde{\tau})\d\tilde{\tau} \right) \d\tau \notag \\
    & + \frac1{\phi(t,0)} \left( \partial_tk(t,0) - \bar{k}' \left( \int_0^t v^{\delta}(\tau)\d\tau \right) v^\delta(t) + \bar{\delta}\partial_x\phi\left( 0, \int_0^t v^{\delta}(\tau)\d\tau \right) v^\delta(t) \right) \notag \\
    & - \frac{v^\delta(t)}{\phi(t,0)} \int_0^t v^{\delta}(\tau) \delta(\tau) \partial_{xx}^2\phi \left( \tau, \int_\tau^t v^{\delta}(\tilde{\tau})\d\tilde{\tau} \right) - \delta(\tau) \partial_{tx}^2\phi \left( \tau, \int_\tau^t v^{\delta}(\tilde{\tau})\d\tilde{\tau} \right) \d\tau . \label{eq:ch3_delta_dot}
\end{align}
This identity implies that $\delta\in\mathbf{W}^{1,\infty}([0,T])$,
and we can recover the inflow rates $f(t)$ for $t\in[0,T]$ from the recovered $\delta(t)$ and the measurement $k(t,0)$ for $t\in[0,T]$ using \eqref{eq:inv_source_1} and \eqref{eq:ch3_delta_dot}.\\
\textbf{(Step 4)} We derive the stability estimate for the inverse problem. Suppose that $\delta$ and $\tilde{\delta}$ are solutions to equation \eqref{eq:ch3_volterra} corresponding to the observational data $k(\cdot,0)$ and $\tilde{k}(\cdot,0)$, respectively. A similar argument to the one used in the above existence proof gives:
\begin{align*}
    |\delta(t) - \tilde{\delta}(t)| \leq &\frac1{\phi_{\mathrm{min},0}} \left( |k(t,0) - \tilde{k}(t,0)| + \norm{\phi}_{\mathbf{L}^\infty}\norm {V}_{\mathbf{L}^\infty} \int_0^t |k(\tau,0) - \tilde{k}(\tau,0)| \d\tau \right)\\
     &+ C_5 \int_0^t |\delta(\tau) - \tilde{\delta}(\tau)| \d\tau,
\end{align*}
for $t\in[0,T]$, where the constant $C_5$ is as in the existence proof.
Using Gronwall's inequality, we deduce
\begin{align*}
    \|{\delta - \tilde{\delta}}\|_{\mathbf{L}^\infty} \leq C_6 \|{k(\cdot,0)-\tilde{k}(\cdot,0)}\|_{\mathbf{L}^\infty},
\end{align*}
where the constant $C_6=C_6\left( T,L,V,\bar{\delta},\phi_{\mathrm{min},0}^{-1},\|{k(\cdot,0)}\|_{\mathbf{L}^\infty},\|{\bar{k}}\|_{\mathbf{W}^{1,\infty}},\|{\phi}\|_{\mathbf{W}^{2,\infty}} \right)$.
Suppose that $f$ and $\tilde{f}$ are the recovered inflow rates from $\delta,k(\cdot,0)$ and $\tilde{\delta}, \tilde{k}(\cdot,0)$, respectively, using \eqref{eq:inv_source_1} and \eqref{eq:ch3_delta_dot}.
Using integration by parts, we can rewrite \eqref{eq:ch3_delta_dot} as:
\begin{align}
    \frac{\d\delta(t)}{\d t} =& - v^\delta(t) k(t,0) + \frac1{\phi(t,0)} \left( \partial_tk(t,0) - \bar{k}' \left( \int_0^t v^{\delta}(\tau)\d\tau \right) v^\delta(t)  \right) \notag \\
    & - \frac{v^\delta(t)}{\phi(t,0)} \int_0^t \left( \frac{\d\delta(\tau)}{\d\tau} + v^{\delta}(\tau) k(\tau,0) \right) \partial_x\phi \left( \tau, \int_\tau^t v^{\delta}(\tilde{\tau})\d\tilde{\tau} \right) \d\tau. \label{eq:ch3_delta_dot_rewrite}
\end{align}
We can bound $\|{\frac{\d\delta}{\d t}-\frac{\d\tilde{\delta}}{\d t}}\|_{\mathbf{L}^\infty}$ using the above identity and Gronwall's inequality, and then derive an estimate on $\|{f-\tilde{f}}\|_{\mathbf{L}^\infty}$ from \eqref{eq:inv_source_1}. Finally we have
\begin{align}\label{eq:ch3_stablity_estimate_f}
    \|{f - \tilde{f}}\|_{\mathbf{L}^\infty} \leq C_7 \|{k(\cdot,0)-\tilde{k}(\cdot,0)}\|_{\mathbf{W}^{1,\infty}},
\end{align}
where the constant $C_7=C_7\left( T,L,V,\bar{\delta},\phi_{\mathrm{min},0}^{-1},\|{k(\cdot,0)}\|_{\mathbf{L}^\infty},\|{\bar{k}}\|_{\mathbf{W}^{2,\infty}},\|{\phi}\|_{\mathbf{W}^{2,\infty}}\right)$.
The uniqueness of solutions follows from the stability estimate \eqref{eq:ch3_stablity_estimate_f}. This completes the proof of the theorem.
\end{proof}

\begin{remark}
The analysis for \cref{thm:inverse_source} requires the inflow distribution $\phi$ to be $\mathbf{C}^2$ smooth. Nonetheless, it is sufficient to have the smoothness condition within the domain $[0,T]\times[0,\norm{V}_{\mathbf{L}^\infty} T]$, which is the domain of dependence for the observed data $k(t,0)$ for $t\in[0,T]$. A practical example is $\phi(t,x)=\frac1L\mathbf{1}_{[0,L]}(x)$, indicating a uniform distribution over $[0,L]$ for all times. In this scenario, the well-posedness of the strong solution for $t\in[0,T]$ and the conclusion of \cref{thm:inverse_source} still applies, provided that $\norm{V}_{\mathbf{L}^\infty}T<L$. Similarly, the smoothness condition for the initial data $\bar{k}$ can be relaxed, requiring $\bar{k}$ to be $\mathbf{C}^2$ smooth only within the spatial domain $[0,\norm{V}_{\mathbf{L}^\infty} T]$.
\end{remark}

\begin{remark}
The proof indicates that the existence of solutions to the inverse problem requires only $k(\cdot,0)\in\mathbf{L}^\infty([0,T])$.
But in the stability estimate \eqref{eq:formal_stab_estm_f}, we have used the $\mathbf{W}^{1,\infty}$ norm of $k(\cdot,0) - \tilde{k}(\cdot,0)$ to control the $\mathbf{L}^\infty$ norm of $f-\tilde{f}$.
The applicability of the $\mathbf{W}^{1,\infty}$ norm in this context is guaranteed by \cref{prop:ch3_forward_wellposedness}.
However, if the objective is to recover the cumulative inflow $F(t) = \int_0^t f(\tau) \d\tau$, then we can get the following stability estimate:
\begin{align*}
    \|{F - \tilde{F}}\|_{\mathbf{L}^\infty} \leq C_F \|{k(\cdot,0)-\tilde{k}(\cdot,0)}\|_{\mathbf{L}^\infty},
\end{align*}
where the constant $C_F$ depends on $ T,L,V,\bar{\delta},\phi_{\mathrm{min},0}^{-1},\|{k(\cdot,0)}\|_{\mathbf{L}^\infty},\|{\phi}\|_{\mathbf{W}^{2,\infty}},\|{\bar{k}}\|_{\mathbf{W}^{1,\infty}} $.
\end{remark}

\subsection{Discussions on nonsmooth inflow distribution}
Lastly, we discuss the case of nonsmooth inflow distribution $\phi$. The discontinuity points will propagate along the characteristic curves, which significantly complicates the analysis of the associated inverse problem. To shed insights, throughout this part, we focus on one specific inflow distribution $\phi$ and make the following assumption in addition to \cref{assm:2}.
\begin{assumption}\label{assm:2_add}
{\rm(i)} The inflow distribution $\phi(t,x)=\frac1L\mathbf{1}_{[0,L]}(x)$ for $t\in[0,T]$ and the discontinuity point of $\phi$ at $x=L$ propagates to the boundary point $x=0$ within the time horizon $[0,T]$; and {\rm(ii)} the velocity function $V$ satisfies $V(k)\geq V_{\mathrm{min}}$ for all $k\geq0$ where $V_{\mathrm{min}}$ is a positive constant. 
\end{assumption}

We keep the notations in \cref{sec:math_analysis}. Under \cref{assm:2_add}, equation \eqref{eq:inv_source_4} becomes
\begin{align}
    &k(t,0) - \bar{k}\left(\int_0^t v^{\delta}(\tau) \d\tau \right) 
    = \frac1L\int_{\eta^\delta(t)}^t \frac{\d\delta(\tau)}{\d\tau} + v^{\delta}(\tau)k(\tau,0) \d\tau, \quad t\in[0,T], \label{eq:inv_source_eta}
\end{align}
with
\begin{align*}
    \eta^\delta(t) \doteq \begin{cases}
        0, \quad \mathrm{if} \quad \int_0^t v^{\delta}(\tau) \d\tau \leq L,\\
        \tilde{t} \quad \mathrm{s.\,t.} \quad \int_{\tilde{t}}^t v^{\delta}(\tau) \d\tau = L, \quad \mathrm{if} \quad \int_0^t v^{\delta}(\tau) \d\tau > L.
    \end{cases}
\end{align*}
The assumption that $V(k)\geq V_{\mathrm{min}}>0$ for all $k\geq0$ guarantees the well-definedness of $\eta^\delta(t)$ for any $t\in[0,T]$. Moreover, $\eta^\delta(t)$ is monotone increasing in $t$, and
\begin{align}\label{eq:eta_step_estm}
    \eta^\delta(t) \leq t - \norm{V}_{\mathbf{L}^\infty}^{-1} L , \quad \mathrm{if} \quad \eta^\delta(t)>0.
\end{align}
We can rewrite problem \eqref{eq:inv_source_eta} as
\begin{align}\label{eq:volterra_eta}
    \delta(t) = \delta\left(\eta^\delta(t)\right) - \int_{\eta^\delta(t)}^t v^\delta(\tau) k(\tau,0) \d\tau + L \left( k(t,0) - \bar{k}\left(\int_0^t v^{\delta}(\tau) \d\tau \right) \right),\quad t\in [0,T].
\end{align}
This forms the basis for the analysis of the existence issue. Note that problem \eqref{eq:volterra_eta} defines $\delta(t)$ for $t\in[0,T]$ in a recursive manner, and condition \eqref{eq:eta_step_estm} ensures that the recursion has no more than $T\norm{V}_{\mathbf{L}^\infty}/L$ steps. 

For the direct problem \eqref{eq:bathtub_model}-\eqref{eq:initial_condition}, \cref{thm:wellposedness_weak} and \cref{prop:estimate} yield the existence of a unique weak solution $k$ with $k(\cdot,0)\in\mathbf{L}^\infty([0,T])$.
Consequently, equation \eqref{eq:inv_source_1} indicates that $\delta\in\mathbf{W}^{1,\infty}([0,T])$. Moreover, we have
\begin{align*}
    \frac{\d\eta^{\delta}(t)}{\d t} = \begin{cases}
        0, \quad &\mathrm{if} \quad \int_0^t v^{\delta}(\tau) \d\tau \leq L, \\
        \frac{v^{\delta}(t)}{v^{\delta}\left(\eta^{\delta}(t)\right)}, \quad &\mathrm{if}  \quad \int_0^t v^{\delta}(\tau) \d\tau > L,
    \end{cases}
\end{align*}
which implies $\eta^\delta\in\mathbf{W}^{1,\infty}([0,T])$, and
\begin{align*}
    \partial_tk(t,0) = \bar{k}'\left(\int_0^t v^{\delta}(\tau) \d\tau \right)v^\delta(t) + f(t) - f\left(\eta^\delta(t)\right) \frac{\d\eta^\delta(t)}{\d t},
\end{align*}
which implies  $k(\cdot,0)\in\mathbf{W}^{1,\infty}([0,T])$.
Once $\delta$ is solved from \eqref{eq:volterra_eta}, the inflow rates $f$ can be recovered from \eqref{eq:inv_source_1}.

To establish the existence of solutions to the integral equation \eqref{eq:volterra_eta}, we consider the following  successive approximations:
 \begin{align*}
     &\delta_0(t) = 0, \\
     &\delta_{n+1}(t) = \begin{cases} \displaystyle
         \bar{\delta} - \int_0^t v^{\delta_n}(\tau) k(\tau,0) \d\tau 
          + L \left( k(t,0) - \bar{k}\left(\int_0^t v^{\delta_n}(\tau) \d\tau \right) \right), \quad \mathrm{if} \quad t \leq t^*_n,\\
       \displaystyle  \delta_{n+1}\left( \eta^{\delta_n}(t) \right)\!\! - \!\!\int_{\eta^{\delta_n}(t)}^t v^{\delta_n}(\tau) k(\tau,0) \d\tau 
          +L \left( k(t,0)\! - \!\bar{k}\left(\int_0^t v^{\delta_n}(\tau) \d\tau \right) \right),\\
          \qquad\qquad \qquad \mathrm{if} \quad t > t^*_n,
     \end{cases}
\end{align*}
where $t^*_n\in[0,\infty)$ is the unique solution to 
$
\int_0^{t^*_n} v^{\delta_n}(\tau) \d\tau = L$.
For any $n\geq0$ and given $\delta_n$, $\delta_{n+1}(t)$ is defined recursively for $t\in[0,T]$, with the recursion having no more than $T\norm{V}_{\mathbf{L}^\infty}/L$ steps. 

The next result gives the existence of a solution, constructed by the above successive approximations.
\begin{proposition}
Let \cref{assm:2} and \cref{assm:2_add} be fulfilled. Then for any observational data 
 $k(\cdot,0)\in\mathbf{W}^{1,\infty}([0,T])$, the series $\sum_{n=0}^\infty (\delta_{n+1}(t)-\delta_n(t))$ converges uniformly for $t\in[0,T]$ to a solution $\delta(t)$ to the integral equation \eqref{eq:volterra_eta}.
\end{proposition}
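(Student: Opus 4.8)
The plan is to mirror the three-step structure of the proof of \cref{thm:inverse_source}: obtain a uniform bound on the iterates, control the consecutive differences by a Gronwall-type recursion, and sum the resulting series. The genuinely new difficulty is the delayed, self-referential term $\delta_{n+1}(\eta^{\delta_n}(t))$, whose breakpoint $\eta^{\delta_n}$ moreover depends on the iterate itself. I would begin by recording two uniform a priori bounds. Telescoping the recursion along the at most $N_\ast\doteq\lceil T\norm{V}_{\mathbf{L}^\infty}/L\rceil$ stages furnished by \eqref{eq:eta_step_estm}---each stage having width $t-\eta^{\delta_n}(t)\leq L/V_{\mathrm{min}}$ by \cref{assm:2_add}(ii) and thus contributing an increment bounded by $\norm{V}_{\mathbf{L}^\infty}\norm{k(\cdot,0)}_{\mathbf{L}^\infty}L/V_{\mathrm{min}}+L(\norm{k(\cdot,0)}_{\mathbf{L}^\infty}+\norm{\bar{k}}_{\mathbf{L}^\infty})$---yields a uniform bound $\sup_n\norm{\delta_n}_{\mathbf{L}^\infty}\leq M$. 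Differentiating the recursion, and using that $\tfrac{\d}{\d t}\eta^{\delta_n}(t)=v^{\delta_n}(t)/v^{\delta_n}(\eta^{\delta_n}(t))\leq\norm{V}_{\mathbf{L}^\infty}/V_{\mathrm{min}}$, a second telescoping over the finitely many stages produces a uniform Lipschitz bound $\sup_n\mathrm{Lip}(\delta_n)\leq\Lambda$; it is here that the hypothesis $k(\cdot,0)\in\mathbf{W}^{1,\infty}([0,T])$ is used.

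The next ingredient is a stability estimate for the breakpoint. Whenever positive, $\eta^{\delta_n}(t)$ and $\eta^{\delta_{n-1}}(t)$ both satisfy $\int_{\eta^{\delta}(t)}^t v^{\delta}(\tau)\,\d\tau=L$; subtracting the two identities and invoking the lower bound $v^{\delta_n},v^{\delta_{n-1}}\geq V_{\mathrm{min}}>0$ gives
\begin{align*}
 \bigl|\eta^{\delta_n}(t)-\eta^{\delta_{n-1}}(t)\bigr|\leq\frac{\norm{V'}_{\mathbf{L}^\infty}}{L\,V_{\mathrm{min}}}\int_0^t\bigl|\delta_n(\tau)-\delta_{n-1}(\tau)\bigr|\,\d\tau,
\end{align*}
and the same bound persists when one breakpoint vanishes; this is precisely where the strict positivity $V_{\mathrm{min}}>0$ is indispensable. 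Observing next that the two branches of the successive approximations agree at $t^*_n$ (so that a single relation holds for all $t$ with $\delta_{n+1}(0)=\bar{\delta}$), I would write $\Delta_n\doteq\delta_{n+1}-\delta_n$, subtract the relations for $\delta_{n+1}$ and $\delta_n$, and decompose the delayed difference as
\begin{align*}
 \delta_{n+1}(\eta^{\delta_n}(t))-\delta_n(\eta^{\delta_{n-1}}(t))=\Delta_n(\eta^{\delta_n}(t))+\bigl(\delta_n(\eta^{\delta_n}(t))-\delta_n(\eta^{\delta_{n-1}}(t))\bigr),
\end{align*}
estimating the last term by $\Lambda\,|\eta^{\delta_n}(t)-\eta^{\delta_{n-1}}(t)|$ and bounding the remaining integral terms through $|v^{\delta_n}-v^{\delta_{n-1}}|\leq\tfrac{\norm{V'}_{\mathbf{L}^\infty}}{L}|\Delta_{n-1}|$ and the breakpoint estimate. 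This yields the recursive inequality
\begin{align*}
 |\Delta_n(t)|\leq\bigl|\Delta_n(\eta^{\delta_n}(t))\bigr|+C_\ast\int_0^t|\Delta_{n-1}(\tau)|\,\d\tau,\quad t\in[0,T],
\end{align*}
with $C_\ast$ depending only on the fixed data and the constants $M,\Lambda$.

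The crux is to eliminate the self-referential term $|\Delta_n(\eta^{\delta_n}(t))|$. I would iterate the last inequality, applying it successively at $\eta^{\delta_n}(t)$, $\eta^{\delta_n}(\eta^{\delta_n}(t))$, and so on. By \eqref{eq:eta_step_estm} each application lowers the time argument by at least $L/\norm{V}_{\mathbf{L}^\infty}$, so after at most $N_\ast$ steps the argument reaches $0$, where $\Delta_n(0)=0$ because $\delta_{n+1}(0)=\bar{\delta}$ is independent of $n$. Since every intermediate integral is dominated by $\int_0^t|\Delta_{n-1}|$, this collapses to the Picard recursion
\begin{align*}
 |\Delta_n(t)|\leq N_\ast C_\ast\int_0^t|\Delta_{n-1}(\tau)|\,\d\tau=:\tilde{C}\int_0^t|\Delta_{n-1}(\tau)|\,\d\tau.
\end{align*}
Starting from $|\Delta_0(t)|=|\delta_1(t)|\leq M$ and inducting gives $|\Delta_n(t)|\leq M(\tilde{C}t)^n/n!$, whence $\sum_{n\geq0}\Delta_n$ converges uniformly on $[0,T]$ and $\delta_n\to\delta$ uniformly. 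Finally, passing to the limit in the recursion---legitimate since $\delta\mapsto v^\delta$ and $\delta\mapsto\eta^\delta$ are continuous under uniform convergence, the latter again by the breakpoint estimate---shows that the limit $\delta$ solves \eqref{eq:volterra_eta}. The main obstacle throughout is exactly the coupling of the self-referential delay with the iterate-dependent breakpoint $\eta^{\delta_n}$: taming it hinges on the uniform Lipschitz control of the iterates and on the positivity $V_{\mathrm{min}}>0$, which simultaneously supplies the breakpoint stability estimate and the finite-stage truncation of the recursion.
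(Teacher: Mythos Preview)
Your proposal is correct and rests on the same pillars as the paper's proof—uniform $\mathbf{W}^{1,\infty}$ bounds on the iterates, the breakpoint stability estimate $|\eta^{\delta_n}(t)-\eta^{\delta_{n-1}}(t)|\leq C\int_0^t|\Delta_{n-1}|$, and the finite-stage collapse guaranteed by \eqref{eq:eta_step_estm}—but the organization differs in two worthwhile ways. First, the paper splits the argument into three cases according to whether $t$ lies in $[0,t_{n-1}^*]$, $(t_{n-1}^*,t_n^*]$, or $(t_n^*,T]$, and in the middle case bounds $|\Delta_n(t)|$ by moving both $\delta_{n+1}$ and $\delta_n$ back to $t_{n-1}^*$ via their Lipschitz constants together with an estimate on $|t_n^*-t_{n-1}^*|$; you avoid this case analysis entirely by writing the recursion in a single formula valid for all $t$. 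Second, the paper decomposes the delayed term as $\Delta_n(\eta^{\delta_{n-1}}(t))+[\delta_{n+1}(\eta^{\delta_n}(t))-\delta_{n+1}(\eta^{\delta_{n-1}}(t))]$, invoking the induction hypothesis immediately to absorb the second piece into $C^nt^n/n!$ and then iterating in $t$ via $\eta^{\delta_{n-1}}$; you instead split as $\Delta_n(\eta^{\delta_n}(t))+[\delta_n(\eta^{\delta_n}(t))-\delta_n(\eta^{\delta_{n-1}}(t))]$, first iterate in $t$ via $\eta^{\delta_n}$ to reach the clean Picard inequality $|\Delta_n(t)|\leq\tilde{C}\int_0^t|\Delta_{n-1}|$, and only then induct in $n$. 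Your separation of the $t$-iteration from the $n$-induction is arguably tidier; the paper's interleaved argument is more explicit about where each case arises. One minor slip: for arbitrary observational data $k(\cdot,0)$ one has $\delta_{n+1}(0)=\bar{\delta}+L(k(0,0)-\bar{k}(0))$, not necessarily $\bar{\delta}$; what you actually need—and what holds—is that this value is independent of $n$, so that $\Delta_n(0)=0$ for $n\geq1$.
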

\begin{proof}
We claim the following estimate:
\begin{align}\label{eq:induc_eta}
     |\delta_{n+1}(t) - \delta_n(t)| \leq  \frac{C^nt^n}{n!}, \quad \mathrm{for} \ t\in[0,T],\,n\geq0,
\end{align}
where the constant $C=C\left(T,L,V,\|{k(\cdot,0)}\|_{\mathbf{L}^\infty}, \|{\bar{k}'}\|_{\mathbf{L}^\infty}\right)$, from which the desired convergence follows directly. 
To show the claim \eqref{eq:induc_eta}, first we give $\mathbf{W}^{1,\infty}$ bounds on the sequences  $\{\eta^{\delta_n}\}_{n=0}^\infty$ and $\{\delta_n\}_{n=0}^\infty$.
For any $n\geq0$, the definition of $\eta^{\delta_n}$ implies $0\leq \eta^{\delta_n}(t) \leq T$ for any $t\in[0,T]$. Moreover, we have $\eta^{\delta_n}(t)\equiv0$ when $t\leq t_n^*$ and 
\begin{align*}
    \int_{\eta^{\delta_n}(t)}^t v^{\delta_n}(\tau)\d\tau = L,
\end{align*}
when $t>t_n^*$. Therefore, $\frac{\d\eta^{\delta_n}(t)}{\d t}$ satisfies
\begin{align*}
    \frac{\d\eta^{\delta_n}(t)}{\d t} = \begin{cases}
        0, \quad &\mathrm{if} \quad t \leq t_n^*,\\
        \frac{v^{\delta_n}(t)}{v^{\delta_n}\left(\eta^{\delta_n}(t)\right)}, \quad &\mathrm{if}  \quad t>t_n^*,
    \end{cases}
\end{align*}
which implies the estimate $\sup_{n\geq0}\norm{\frac{\d\eta^{\delta_n}}{\d t}}_{\mathbf{L}^\infty} \leq \frac{\norm{V}_{\mathbf{L}^\infty}}{V_{\mathrm{min}}}$.
For the sequence $\{\delta_n\}_{n=0}^\infty$, we only need to bound $\norm{\delta_n}_{\mathbf{L}^\infty}$ and $\norm{\frac{\d\delta_n}{\d t}}_{\mathbf{L}^\infty}$ for $n\geq1$ since $\delta_0\equiv0$.
By definition, we have
\begin{align*}
    |\delta_n(t)| \leq \begin{cases}
        \bar{\delta} + C, \quad &\mathrm{if} \quad t \leq t_{n-1}^*, \\
        \left| \delta_n\left(\eta^{\delta_{n-1}}(t)\right) \right| + C, \quad &\mathrm{if} \quad t > t_{n-1}^*,
    \end{cases}
\end{align*}
for $t\in[0,T],\,n\geq1$, where the constant $C=C\left(L,T,V, \norm{k(\cdot,0)}_{\mathbf{L}^\infty}, \norm{\bar{k}}_{\mathbf{L}^\infty}\right)$.
This recursive inequality and the fact that the recursion has no more than $T\norm{V}_{\mathbf{L}^\infty}/L$ steps imply the following estimate:
\begin{align*}
    \sup\nolimits_{n\geq1} \norm{\delta_n}_{\mathbf{L}^\infty} \leq C=C\left(L,T,V,\bar{\delta}, \norm{k(\cdot,0)}_{\mathbf{L}^\infty}, \norm{\bar{k}}_{\mathbf{L}^\infty}\right).
\end{align*}
Furthermore, by definition, we also have
\begin{align*}
    \frac{\d\delta_n(t)}{\d t} = \left\{\begin{aligned}
        \displaystyle &-v^{\delta_n}(t)k(t,0) + L\partial_tk(t,0) - L\bar{k}'\left(\int_0^t v^{\delta_n}(\tau) \,\d\tau \right) v^{\delta_n}(t), \quad \mathrm{if} \quad t \leq t^*_{n-1}, \\
       \displaystyle  &\frac{\d\delta_n}{\d t}\left( \eta^{\delta_{n-1}}(t) \right)\frac{\d\eta^{\delta_{n-1}}(t)}{\d t} - v^{\delta_n}(t)k(t,0) + v^{\delta_n}\left(\eta^{\delta_{n-1}}(t)\right)k\left(\eta^{\delta_{n-1}}(t),0\right) \\
          &\qquad + L\partial_tk(t,0) - L\bar{k}'\left(\int_0^t v^{\delta_n}(\tau) \d\tau \right) v^{\delta_n}(t), \qquad\qquad\quad\ \ \mathrm{if} \quad t > t^*_{n-1}.
    \end{aligned}\right.
\end{align*}
This identity, along with the uniform bound on $\norm{\frac{\d\eta^{\delta_n}}{\d t}}_{\mathbf{L}^\infty}$ for $n\geq0$, implies
\begin{align*}
    \sup\nolimits_{n\geq1} \norm{\frac{\d\delta_n}{\d t}}_{\mathbf{L}^\infty} \leq C=C\left(L,T,V, \bar{\delta},\norm{k(\cdot,0)}_{\mathbf{W}^{1,\infty}}, \norm{\bar{k}}_{\mathbf{W}^{1,\infty}}\right).
\end{align*}
Now we show the claim \eqref{eq:induc_eta} by means of mathematical induction. 
For the base case where $n=0$, the desired estimate \eqref{eq:induc_eta} follows from
\begin{align*}
    |\delta_1(t) - \delta_0(t)| \leq \norm{\delta_1}_{\mathbf{L}^\infty} \leq C \quad \mathrm{for} \quad t\in[0,T].
\end{align*}
Next suppose that \eqref{eq:induc_eta} holds for $n-1$ where $n\geq1$, we show that it holds for $n$. We derive the following estimates for three distinct cases: $t\in[0,t_{n-1}^*],$ $t\in(t_{n-1}^*,t_n^*]$, and $t\in (t_n^*,T]$, where without loss of generality, we have assumed $t^*_{n-1}\leq t^*_n$. The case with $t^*_{n-1}>t^*_n$ can be shown by switching the role of $n-1$ and $n$.
For $t\in[0,t^*_{n-1}]$, similar to the proof of \cref{thm:inverse_source}, we have
\begin{align*}
     |\delta_{n+1}(t) - \delta_n(t)| \leq  \frac{C^nt^n}{n!}.
\end{align*}
Next, for $t\in(t_{n-1}^*,t_n^*]$, from the identity
     \begin{align*}
         \int_0^{t^*_{n-1}} v^{\delta_{n-1}}(\tau) \d\tau = L = \int_0^{t^*_n} v^{\delta_n}(\tau) \d\tau,
     \end{align*}
and the definition $v^\delta(\tau)=V(\delta(\tau)/L)$, we deduce
\begin{align*}
    t^*_n - t^*_{n-1} &\leq \frac{1}{V_{\min}} \int_{t_{n-1}^*}^{t_n^*} v^{\delta_n}(\tau) \d\tau = \frac{1}{V_{\min}}\int_0^{t_{n-1}^*} v^{\delta_{n-1}}(\tau) - v^{\delta_n}(\tau) \d\tau \\
    & \leq \frac{\norm{V'}_{\mathbf{L}^\infty}}{LV_{\mathrm{min}}} \int_0^{t_{n-1}^*}|\delta_{n-1}(\tau)-\delta_n(\tau)|\d\tau \leq \frac{\norm{V'}_{\mathbf{L}^\infty}}{LV_{\mathrm{min}}} \cdot \frac{C^{n}(t^*_{n-1})^n}{n!}.
\end{align*}
We also have
\begin{align*}
    &|\delta_{n+1}(t) - \delta_n(t)|\\
    \leq& |\delta_{n+1}(t) - \delta_{n+1}(t_{n-1}^*)| + |\delta_n(t) - \delta_n(t_{n-1}^*)| + |\delta_{n+1}(t_{n-1}^*) - \delta_n(t_{n-1}^*)| \\
    \leq& \left( \norm{\frac{\d\delta_{n+1}}{\d t}}_{\mathbf{L}^\infty} + \norm{\frac{\d\delta_n}{\d t}}_{\mathbf{L}^\infty} \right) (t_n^* - t_{n-1}^*) + |\delta_{n+1}(t_{n-1}^*) - \delta_n(t_{n-1}^*)|\\
    \leq& \frac{C^{n}(t^*_{n-1})^n}{n!}.
\end{align*}
Last, for $t\in(t_n^*,T]$, we have
\begin{align*}
    |\delta_{n+1}(t) - \delta_n(t)| \leq& \left|\delta_{n+1}\left(\eta^{\delta_n}(t)\right) - \delta_n\left(\eta^{\delta_{n-1}}(t)\right)\right| + \frac{C^nt^n}{n!},
\end{align*}
where the second term on the right hand side is obtained similarly using the argument from the proof of \cref{thm:inverse_source}. To bound the first term, we use the induction hypothesis and the identity
\begin{align*}
    \int_{\eta^{\delta_{n-1}}(t)}^{t} v^{\delta_{n-1}}(\tau) \d\tau = L = \int_{\eta^{\delta_n}(t)}^{t} v^{\delta_n}(\tau) \d\tau,
\end{align*}
to give
\begin{align*}
  \left|\eta^{\delta_n}(t) - \eta^{\delta_{n-1}}(t)\right| \leq& \frac{1}{V_{\min}} \left| \int_{\eta^{\delta_{n-1}}(t)}^{\eta^{\delta_n}(t)} v^{\delta_{n-1}}(\tau)\d\tau \right| = \frac{1}{V_{\min}} \left| \int_{\eta^{\delta_n}(t)}^t v^{\delta_n}(\tau) - v^{\delta_{n-1}}(\tau) \d\tau \right| \\
  \leq& \frac{\norm{V'}_{\mathbf{L}^\infty}}{LV_{\mathrm{min}}}\int_{\eta^{\delta_n}(t)}^t |\delta_n(\tau)-\delta_{n-1}(\tau)|\d\tau 
   \leq \frac{C^nt^n}{n!}.
\end{align*}
Then by the triangle inequality, we obtain
\begin{align*}
    &\left|\delta_{n+1}\left(\eta^{\delta_n}(t)\right) - \delta_n\left(\eta^{\delta_{n-1}}(t)\right)\right| \\
    \leq& \left|\delta_{n+1}\left(\eta^{\delta_{n-1}}(t)\right) - \delta_n\left(\eta^{\delta_{n-1}}(t)\right)\right| + \norm{\frac{\d\delta_{n+1}}{\d t}}_{\mathbf{L}^\infty} \left|\eta^{\delta_n}(t) - \eta^{\delta_{n-1}}(t)\right| \\
    \leq& \left|\delta_{n+1}\left(\eta^{\delta_{n-1}}(t)\right) - \delta_n\left(\eta^{\delta_{n-1}}(t)\right)\right| + \frac{C^nt^n}{n!},
\end{align*}
which gives
\begin{align*}
    |\delta_{n+1}(t) - \delta_n(t)| \leq \left|\delta_{n+1}\left(\eta^{\delta_{n-1}}(t)\right) - \delta_n\left(\eta^{\delta_{n-1}}(t)\right)\right| + \frac{C^nt^n}{n!}.
\end{align*}
This recursive inequality and the fact that the recursion has no more than $T\norm{V}_{\mathbf{L}^\infty}/L$ steps imply the desired claim \eqref{eq:induc_eta}. This completes the induction step and thus also the proof of the proposition.
\end{proof}

Last, we comment on the stability issue. For any given $k(\cdot,0)$ and $\tilde{k}(\cdot,0)$, let $\delta$ and $\tilde{\delta}$ denote the respective solutions to equation \eqref{eq:volterra_eta}. Using a similar argument to that was used in the above proof, we derive the following recursive inequality: 
\begin{align*}
    |\delta(t) - \tilde{\delta}(t)| \leq \left|\delta\left(\eta^\delta(t)\right) - \tilde{\delta}\left(\eta^{\delta}(t)\right) \right| + C \left( \|{k(\cdot,0)-\tilde{k}(\cdot,0)}\|_{\mathbf{L}^\infty} + \int_0^t |\delta(\tau) - \tilde{\delta}(\tau)| \d\tau  \right),
\end{align*}
for $t\in[0,T]$, which leads to a stability estimate for $\delta$:
\begin{equation}
    \|{\delta - \tilde{\delta}}\|_{\mathbf{L}^\infty} \leq C \|{k(\cdot,0)-\tilde{k}(\cdot,0)}\|_{\mathbf{L}^\infty},
\end{equation}
where the constant $C$ depends on $T,L,V,\bar{\delta},\|{k(\cdot,0)}\|_{\mathbf{W}^{1,\infty}},\|{\bar{k}}\|_{\mathbf{W}^{1,\infty}}$.
In contrast to the stability estimate \eqref{eq:formal_stab_estm_delta} for $\delta$ given in \cref{thm:inverse_source}, the above estimate involves the $\mathbf{W}^{1,\infty}$ norm of $k(\cdot,0)$. Regarding the inflow rates $f$, obtaining a stability estimate akin to \eqref{eq:formal_stab_estm_f} from \cref{thm:inverse_source} would require the $\mathbf{W}^{2,\infty}$ norm of $k(\cdot,0)$, which is not accessible with our current approach. We will explore potential methods to overcome this challenge in future research.

\begin{remark}
The analysis in \cref{sec:math_analysis} for the Volterra integral equation \eqref{eq:ch3_volterra} encounters difficulties with discontinuities in $\phi$, as $\partial_x\phi \left( \tau, \int_\tau^t v^\delta(\tilde{\tau})\d\tilde{\tau} \right) \d\tau$ is interpretable only as a signed measure. For the specific case discussed in this subsection, where $\phi(t,x)=\frac1L\mathbf{1}_{[0,L]}(x)$, $\partial_x\phi \left( \tau, \int_\tau^t v^\delta(\tilde{\tau})\d\tilde{\tau} \right) \d\tau$ represents a Dirac point mass at $\tau=\eta^\delta(t)$, giving rise to the $\delta\left(\eta^\delta(t)\right)$ term in equation \eqref{eq:volterra_eta}. Our approach employs successive approximations that incorporate a careful implicit treatment of this term, enabling us to address the challenge posed by discontinuity.
The exploration of more general discontinuous inflow distributions is left for future research.
\end{remark}

\begin{remark}
Let us also mention some existing literature on Volterra integral equations with strongly singular kernels, which can only be interpreted as measures.
In \cite{staffans1976positive}, the existence and boundedness of solutions were demonstrated for the equation
$ \frac{\d x(t)}{\d t} + \int_0^t g(x(t-\tau)) \d\mu(\tau) = f(t)$,
where $\mu$ is a positive definite measure.
In \cite{marques2016integral}, the authors examined the equation
$ x(t) = f(t) + \int_0^t K(t,x(s),s) \d u(s), $
with $u$ being a function of bounded variation and the integral defined in the Kurzweil-Henstock's sense. They established the existence and stability of solutions under specific conditions on $K$ and $u$, using the theory of generalized ODEs. 
Schwabik in \cite{schwabik1982generalized} extended these results to include Perron integrals, and his later work \cite{schwabik1999linear} addressed Stieltjes integrals.
Additionally, \cite{muftahov2017numeric} investigated numerical solutions to a system of nonlinear integral equations related to the Vintage capital model: 
\begin{align*}
    \int_{y(t)}^t H(t,\tau,x(\tau))\d\tau = x(t), \qquad
    \int_{y(t)}^t K(t,\tau,x(\tau))\d\tau = f(t).
\end{align*}
\end{remark}

\section{Reconstruction method for inflow rates}\label{sec:recon}
In this section, we present a numerical method for recovering the inflow rates $f(t)$ for $t\in[0,T]$ from the given lateral boundary observation $k(t,0)$ for $t\in[0,T]$. Additionally, we conduct an error analysis of the method, subject to the conditions of \cref{thm:inverse_source}.

\subsection{Numerical method}
The proposed numerical method is inspired by the mathematical analysis given in \cref{sec:math_analysis}. First, we rewrite equation \eqref{eq:ch3_volterra} into
\begin{align}
    \delta(t) =& \frac1{\phi(t,0)} \left( k(t,0) - \bar{k} \left( \int_0^t v^{\delta}(\tau)\d\tau \right) + \bar{\delta}\phi\left( 0, \int_0^t v^{\delta}(\tau)\d\tau \right) \right) \nonumber \\
    & - \frac1{\phi(t,0)} \int_0^t v^{\delta}(\tau) k(\tau,0) \phi \left( \tau, \int_\tau^t v^{\delta}(\tilde{\tau})\d\tilde{\tau} \right)\nonumber \\
    &+ \frac1{\phi(t,0)} \int_0^t \delta(\tau) \frac{\d\phi}{\d\tau} \left( \tau, \int_\tau^t v^{\delta}(\tilde{\tau})\d\tilde{\tau} \right) \d\tau. \label{eq:ch3_volterra_reform}
\end{align}
For the time discretization, we take a uniform time mesh $0 = t_0 < t_1 < \cdots < t_{N-1} < t_N = T$, $ t_n = n\Delta t$,
with the mesh size $\Delta t = T/N$. We then discretize \eqref{eq:ch3_volterra_reform} using \eqref{eq:inv_source_3} and the definition $v^\delta(\tau)=V(\delta(\tau)/L)$ as
\begin{align}
    \xi_n =& \sum_{m=0}^{n-1} V\left(\frac{\delta_m}{L}\right) \Delta t, \quad n=1,\cdots,N;\label{eq:ind_xi} \\
    \delta_n =& \frac1{\phi(t_n,0)} \left( k(t_n,0) - \bar{k}(\xi_n) + \bar{\delta}\phi(0,\xi_n) \right)\nonumber \\
     & - \frac1{\phi(t_n,0)} \sum_{m=0}^{n-1} V\left(\frac{\delta_m}{L}\right) k(t_m,0) \phi(t_m,\xi_n-\xi_m) \Delta t \notag \\
    & +\frac1{\phi(t_n,0)} \sum_{m=0}^{n-1} \delta_m \left( \phi(t_{m+1},\xi_n-\xi_{m+1}) - \phi(t_m,\xi_n-\xi_m) \right), \label{eq:ind_delta}
\end{align}
for $n=1,\cdots,N.$
The scheme \eqref{eq:ind_xi}-\eqref{eq:ind_delta} is equipped with initial conditions $\xi_0=0$ and $\delta_0=\bar{\delta}$. Overall, the scheme is explicit, constructed via employing the left point quadrature to relevant integrals. Then we can solve for $\{\delta_n\}_{n=1}^N$ sequentially. Finally, the reconstructed inflow rates $\{f_n\}_{n=0}^{N-1}$ is given by
\begin{align}
    f_n = \frac{\delta_{n+1} - \delta_n}{\Delta t} + V\left(\frac{\delta_n}{L}\right) k(t_n,0), \quad n=0,\cdots,N-1, \label{eq:ind_f}
\end{align}
by approximating the derivative in \eqref{eq:inv_source_1} with a difference quotient.
Given that the exact inflow rates $f$ belongs to $\mathbf{L}^\infty([0,T])$, the reconstructed $f_n$ should be interpreted as an approximation to $\frac1{\Delta t}\int_{t_n}^{t_{n+1}} f(t)\d t$.

\subsection{Error analysis}
Now we give an error estimate for the scheme \eqref{eq:ind_xi}-\eqref{eq:ind_delta}-\eqref{eq:ind_f} when applied to noisy observation data $k^\sigma(\cdot,0)$, where $\sigma\geq 0$ is the noise level in the $\mathbf{L}^\infty$ norm, i.e., $\sigma = \norm{k(\cdot,0)-k^\sigma(\cdot,0)}_{\mathbf{L}^\infty}$.

We first give some Sobolev estimates for the exact $\delta$, $\xi$ and $k(\cdot,0)$. It follows from equations \eqref{eq:inv_source_1}-\eqref{eq:inv_source_3} that $\norm{\frac{d\xi}{\d t}}_{\mathbf{L}^\infty}\leq\norm{V}_{\mathbf{L}^\infty}$ and
\begin{align*}
    \norm{k(\cdot,0)}_{\mathbf{W}^{1,\infty}} + \norm{\delta}_{\mathbf{W}^{1,\infty}} \leq K = K\left( T,V,\norm{f}_{\mathbf{L}^\infty},\norm{\bar{k}}_{\mathbf{W}^{1,\infty}},\norm{\phi}_{\mathbf{W}^{1,\infty}}\right).
\end{align*}
Next we define the pointwise errors $\{E_n^\delta\}_{0\leq n\leq N}$ and $\{E^\xi_{n,m}\}_{0\leq m\leq n\leq N}$ by
\begin{align*}
    E^\delta_n &= |\delta(t_n) - \delta_n|\quad \mbox{and}\quad
    E^\xi_{n,m} = |\xi(t_n) - \xi(t_m) - \xi_n + \xi_m|.
\end{align*}
Upon comparing \eqref{eq:inv_source_3} with \eqref{eq:ind_xi}, we obtain
\begin{align}
    E^\xi_{n,m} &\leq \frac{\norm{V'}_{\mathbf{L}^\infty}}{L} \sum_{p=m}^{n-1} \int_{t_p}^{t_{p+1}} (K (\tau - t_p) + E^\delta_p)\d\tau \nonumber\\
     & \leq C\Big( (n-m)(\Delta t)^2 + \sum_{p=m}^{n-1} E^\delta_p \Delta t \Big),\label{eq:error_analysis_xi}
\end{align}
where the constant $C=C\left( T,L,V,\norm{f}_{\mathbf{L}^\infty},\norm{\bar{k}}_{\mathbf{W}^{1,\infty}},\norm{\phi}_{\mathbf{W}^{1,\infty}}\right)$.
Then by comparing the defining identities \eqref{eq:ch3_volterra_reform} and \eqref{eq:ind_delta} at $t=t_n$, we have
\begin{align*}
    E^\delta_n \leq& \frac1{\phi_{\mathrm{min},0}} \left( \sigma + \norm{\bar{k}'}_{\mathbf{L}^\infty} E^\xi_{n,0} + \bar\delta \norm{\partial_x\phi}_{\mathbf{L}^\infty} E^\xi_{n,0} \right)\\
     &+ \frac{\norm{V}_{\mathbf{L}^\infty}\norm{\phi}_{\mathbf{L}^\infty}}{\phi_{\mathrm{min},0}} \sum_{m=0}^{n-1} \int_{t_m}^{t_{m+1}} (\sigma + K (\tau - t_m)) \d\tau \\
    &+ \frac{\norm{k(\cdot,0)}_{\mathbf{L}^\infty}}{\phi_{\mathrm{min},0}} \sum_{m=0}^{n-1} \int_{t_m}^{t_{m+1}}\Bigg[ \norm{\phi}_{\mathbf{L}^\infty}\frac{\norm{V'}_{\mathbf{L}^\infty}}{L} \left( K (\tau - t_m) + E^\delta_m \right)   \\
    &\quad +  \norm{V}_{\mathbf{L}^\infty}\big( \left( \norm{\partial_t\phi}_{\mathbf{L}^\infty}+ \norm{V}_{\mathbf{L}^\infty} \norm{\partial_x\phi}_{\mathbf{L}^\infty} \right) (\tau\!-\!t_m) + \norm{\partial_x\phi}_{\mathbf{L}^\infty} E^\xi_{n,m}\big)\Bigg] \d\tau \\
    & + \frac1{\phi_{\mathrm{min},0}} \sum_{m=0}^{n-1} \int_{t_m}^{t_{m+1}} \left( \norm{\partial_t\phi}_{\mathbf{L}^\infty}+\norm{V}_{\mathbf{L}^\infty} \norm{\partial_x\phi}_{\mathbf{L}^\infty} \right) \left( K (\tau - t_m) + E^\delta_m \right) \d\tau \\
    & + \frac{\norm{\delta}_{\mathbf{L}^\infty} + K\Delta t}{\phi_{\mathrm{min},0}} \sum_{m=0}^{n-1}  \Big( \left( \norm{\partial_{tx}^2\phi}_{\mathbf{L}^\infty} + \norm{V}_{\mathbf{L}^\infty} \norm{\partial_{xx}^2\phi}_{\mathbf{L}^\infty} \right)  E^\xi_{n,m} \\
   &\quad  + \frac{\norm{V'}_{\mathbf{L}^\infty}}{L} \norm{\partial_x\phi}_{\mathbf{L}^\infty} E^\delta_m + C\Delta t\Big) \Delta t,
\end{align*}
where in the last two lines we have used the identity
\begin{align*}
    \int_{t_m}^{t_{m+1}} \frac{\d\phi}{\d\tau} \left( \tau, \int_\tau^{t_n} v^{\delta}(\tilde{\tau})\d\tilde{\tau} \right) \d\tau = \phi(t_{m+1},\xi(t_n)-\xi(t_{m+1})) - \phi(t_m,\xi(t_n)-\xi(t_m)),
\end{align*}
and the constant $C=C\left( T,L,V,\norm{\phi}_{\mathbf{W}^{2,\infty}}\right)$. This inequality and \eqref{eq:error_analysis_xi} yield
\begin{align*}
    E^\delta_n \leq& C \left( \sigma + \Delta t + E^\xi_{n,0} + \sum_{m=0}^{n-1} E^\delta_m \Delta t + \sum_{m=0}^{n-1} E^\xi_{n,m} \Delta t\right) \\
    \leq& C \left( \sigma + \Delta t + \sum_{m=0}^{n-1} E^\delta_m \Delta t + \sum_{m=0}^{n-1} (m+1)E^\delta_m(\Delta t)^2 \right) ,
\end{align*}
which then gives 
\begin{equation}\label{eqn:trunc-err}
E^\delta_n\leq C(\sigma+\Delta t),\quad n=1,\cdots,N,
\end{equation} 
where the constant $C=C\left( T,L,V,\bar{\delta},\phi_{\mathrm{min},0}^{-1},\|f\|_{\mathbf{L}^\infty},\|{\bar{k}}\|_{\mathbf{W}^{1,\infty}},\|{\phi}\|_{\mathbf{W}^{2,\infty}}\right)$.
Consequently, we also have 
\begin{equation}\label{eqn:trunc-err-xi}
E^\xi_{n,m}\leq C(n-m)(\sigma+\Delta t)\Delta t,\quad 0\leq m \leq n \leq N.
\end{equation}

Now we give an error estimate on $|f(t_n)-f_n|$. We first note that \eqref{eq:ind_delta} yields
\begin{align*}
    \delta_{n+1} - &\delta_n = -\frac{\phi(t_n,0)}{\phi(t_{n+1},0)} V\left(\frac{\delta_n}{L}\right) k^\sigma(t_n,0) \Delta t \\
    &+ \frac1{\phi(t_{n+1},0)} \left( k^\sigma(t_{n+1},0) - k^\sigma(t_n,0) - \bar{k}(\xi_{n+1}) + \bar{k}(\xi_n) \right) \\
    &  - \frac1{\phi(t_{n+1},0)} \sum_{m=0}^{n} V\left(\frac{\delta_m}{L}\right) k^\sigma(t_m,0) (\phi(t_m,\xi_{n+1}-\xi_m) -\phi(t_m,\xi_n-\xi_m)) \Delta t \\
    & - \frac1{\phi(t_{n+1},0)} \sum_{m=0}^{n-1} (\delta_{m+1}-\delta_m) \left( \phi(t_{m+1},\xi_{n+1}-\xi_{m+1}) - \phi(t_{m+1},\xi_n-\xi_{m+1}) \right).
\end{align*}
By comparing \eqref{eq:ch3_delta_dot_rewrite} and the above identity, and using \eqref{eqn:trunc-err}-\eqref{eqn:trunc-err-xi}, we obtain
\begin{align*}
    &\frac1{\Delta t} \left| \delta_{n+1} - \delta_n - \int_{t_n}^{t_{n+1}} \frac{\d \delta(t)}{\d t}\d t \right| \\
    \leq&  C\left( \sigma+\Delta t+\frac{\sigma}{\Delta t} + \sum_{m=0}^{n-1} \frac1{\Delta t} \left| \delta_{m+1} - \delta_m - \int_{t_m}^{t_{m+1}} \frac{\d \delta(t)}{\d t}\d t \right| \right),
\end{align*}
and consequently
\begin{align*}
    \frac1{\Delta t} \left| \delta_{n+1} - \delta_n - \int_{t_n}^{t_{n+1}} \frac{\d \delta(t)}{\d t}\d t \right| \leq C\left( \sigma+\Delta t+\frac{\sigma}{\Delta t} \right),
\end{align*}
where the constant $C=C\left( T,L,V,\bar{\delta},\phi_{\mathrm{min},0}^{-1},\|f\|_{\mathbf{L}^\infty},\|{\bar{k}}\|_{\mathbf{W}^{2,\infty}},\|{\phi}\|_{\mathbf{W}^{2,\infty}}\right)$.
Then from the defining relation \eqref{eq:ind_f}, we deduce
\begin{align*} 
  \left| f_n - \frac1{\Delta t} \int_{t_n}^{t_{n+1}} f(t)\d t \right| \leq C\left( \sigma+\Delta t+\frac{\sigma}{\Delta t} \right).
\end{align*}
The right hand side takes its minimum when $\Delta t=\sigma^{1/2}$. 
Then with the choice $\Delta t=\sigma^{1/2}$, we obtain the final estimate:
$ \left| f_n - \frac1{\Delta t} \int_{t_n}^{t_{n+1}} f(t)\d t \right| \leq C \sigma^{1/2}$. 
This is summarized in the next result.

\begin{theorem}
Let \cref{assm:2} be fulfilled.
Let $\{f_n\}_{n=0}^{N-1}$ be the reconstruction obtained by the scheme \eqref{eq:ind_xi}-\eqref{eq:ind_delta}-\eqref{eq:ind_f}, and $f$ be the exact inflow rates. Then with the choice $\Delta t=\sigma^{1/2}$, the following error estimate holds:
$$ \sup\nolimits_{0\leq n\leq N-1} \left| f_n - \frac1{\Delta t} \int_{t_n}^{t_{n+1}} f(t)\d t \right| \leq C\sigma^{1/2}, $$
where the constant $C$ depends on $T,L,V,\bar{\delta},\phi_{\mathrm{min},0}^{-1},\|f\|_{\mathbf{L}^\infty},\|{\phi}\|_{\mathbf{W}^{2,\infty}},\|{\bar{k}}\|_{\mathbf{W}^{2,\infty}}$.
\end{theorem}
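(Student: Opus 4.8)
The plan is to decompose the reconstruction error into two stages: first the error between the discrete intermediate quantity $\delta_n$ and the exact $\delta(t_n)$ solving the Volterra equation \eqref{eq:ch3_volterra_reform}, and then the additional error incurred when passing from $\delta$ to $f$ through the difference quotient in \eqref{eq:ind_f}. As a preliminary, I would secure the regularity of the exact data: by \cref{prop:ch3_forward_wellposedness} together with \cref{thm:inverse_source}, the exact $\delta$ and $k(\cdot,0)$ lie in $\mathbf{W}^{1,\infty}([0,T])$ with a norm bounded by a constant $K$ depending only on the listed quantities, and $\xi$ is Lipschitz with $\|\tfrac{\d\xi}{\d t}\|_{\mathbf{L}^\infty}\le\|V\|_{\mathbf{L}^\infty}$. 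This is precisely what makes the left-point quadrature underlying \eqref{eq:ind_xi}--\eqref{eq:ind_delta} first-order consistent. I would then introduce the two pointwise error quantities $E^\delta_n=|\delta(t_n)-\delta_n|$ and $E^\xi_{n,m}=|\xi(t_n)-\xi(t_m)-\xi_n+\xi_m|$, the latter being needed because the kernel in \eqref{eq:ch3_volterra_reform} depends on $\delta$ both directly and through the cumulative travel time $\xi$.

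The first main step is to bound $E^\delta_n$. Comparing the discrete recursion \eqref{eq:ind_delta} with \eqref{eq:ch3_volterra_reform} evaluated at $t=t_n$, each term splits into a quadrature consistency error of size $O(\Delta t)$ per subinterval (controlled by the $\mathbf{W}^{1,\infty}$ regularity above), a data-noise contribution of size $\sigma$, and propagated errors governed by $E^\delta_m$ and $E^\xi_{n,m}$ for $m<n$. Inserting the elementary bound $E^\xi_{n,m}\le C\big((n-m)(\Delta t)^2+\sum_{p=m}^{n-1}E^\delta_p\Delta t\big)$, obtained by differencing \eqref{eq:inv_source_3} against \eqref{eq:ind_xi} and using the Lipschitz continuity of $V$, yields a discrete inequality of the form $E^\delta_n\le C\big(\sigma+\Delta t+\sum_{m<n}E^\delta_m\Delta t\big)$. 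A discrete Gronwall argument then gives $E^\delta_n\le C(\sigma+\Delta t)$, which fed back produces $E^\xi_{n,m}\le C(n-m)(\sigma+\Delta t)\Delta t$, as recorded in \eqref{eqn:trunc-err}--\eqref{eqn:trunc-err-xi}.

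The second main step treats $f$. Rather than differentiate the raw Volterra formula, which would force the kernel to carry $\partial^2_{xx}\phi$ and $\partial^2_{tx}\phi$ and demand $\mathbf{W}^{2,\infty}$ control throughout, I would work from the integration-by-parts form \eqref{eq:ch3_delta_dot_rewrite}, whose right-hand side involves only first derivatives of $k(\cdot,0)$ and $\phi$ together with $\tfrac{\d\delta}{\d t}$ itself. Expanding the telescoping difference $\delta_{n+1}-\delta_n$ from \eqref{eq:ind_delta} and matching it term-by-term against $\int_{t_n}^{t_{n+1}}\tfrac{\d\delta}{\d t}\,\d t$, and invoking the already-established bounds \eqref{eqn:trunc-err}--\eqref{eqn:trunc-err-xi}, I would obtain a second discrete Gronwall inequality, now for the differenced quantity $\tfrac{1}{\Delta t}\big|\delta_{n+1}-\delta_n-\int_{t_n}^{t_{n+1}}\tfrac{\d\delta}{\d t}\,\d t\big|$. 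The crucial new feature is that dividing the $O(\sigma)$ data mismatch by $\Delta t$ generates a term of size $\sigma/\Delta t$, the unavoidable noise amplification of numerical differentiation. Via \eqref{eq:ind_f} and \eqref{eq:inv_source_1} this gives $\big|f_n-\tfrac{1}{\Delta t}\int_{t_n}^{t_{n+1}}f\,\d t\big|\le C(\sigma+\Delta t+\sigma/\Delta t)$.

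The final step is to balance the step size against the noise: the deterministic discretization error grows like $\Delta t$ while the amplified noise decays like $\sigma/\Delta t$, so their sum is minimized at $\Delta t=\sigma^{1/2}$, yielding the claimed rate $C\sigma^{1/2}$. I expect the $f$-estimate, not the $\delta$-estimate, to be the main obstacle: the integration-by-parts reformulation \eqref{eq:ch3_delta_dot_rewrite} is essential to keep the derivative order of $\phi$ and $\bar k$ at one, and the discrete Gronwall must be applied to the differenced quantity itself, so that the previously derived $E^\delta$ and $E^\xi$ bounds enter only as forcing terms; the delicate bookkeeping is to confine the constant's dependence to $\mathbf{W}^{2,\infty}$ norms and no higher while carrying the $\sigma/\Delta t$ term cleanly through the recursion.
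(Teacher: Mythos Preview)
Your proposal is correct and follows essentially the same route as the paper: the two-stage decomposition via $E^\delta_n$ and $E^\xi_{n,m}$, the discrete Gronwall argument yielding \eqref{eqn:trunc-err}--\eqref{eqn:trunc-err-xi}, the passage to $f$ through the integration-by-parts form \eqref{eq:ch3_delta_dot_rewrite} with a second Gronwall on the differenced quantity producing the $\sigma/\Delta t$ term, and the final balancing $\Delta t=\sigma^{1/2}$ are exactly what the paper does. The only cosmetic difference is that the paper extracts the needed $\mathbf{W}^{1,\infty}$ bounds on $\delta$ and $k(\cdot,0)$ directly from the relations \eqref{eq:inv_source_1}--\eqref{eq:inv_source_3} rather than citing \cref{prop:ch3_forward_wellposedness} and \cref{thm:inverse_source}.
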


\begin{remark}\label{rmk:discret}
The proposed reconstruction method achieves the regularizing effect only via the time discretization, and hence the time step size $\Delta t$ has to be chosen properly. Either too large or too small $\Delta t$ can lead to large errors: a too small $\Delta t$ can significantly amplify the data noise, whereas a too large $\Delta t$ leads to large truncation errors. In practice, one may incorporate explicit regularization via suitable penalty \cite{ItoJin:2025} or suitably presmoothing the noisy data $k^\sigma(\cdot,0)$ via filtering \cite{LouisMaass:1990}. 
\end{remark}

\section{Numerical results and discussions}
\label{sec:exp}
In this section, we present several numerical experiments to illustrate the feasibility of recovering the inflow rates $f$ using the proposed explicit scheme \eqref{eq:ind_xi}-\eqref{eq:ind_delta}-\eqref{eq:ind_f}. 
We also briefly discuss one related inverse problem of recovering a time-independent inflow distribution $\phi$.

\subsection{Numerical results for inflow rate recovery}

In this subsection, we present a set of numerical experiments to illustrate the recovery of the inflow rates $f$. In these experiments, we assume that $k=1$ represents the maximum density (bump-to-bump traffic) and take $V(k)=1-k$. The direct problem is solved using an upwinding scheme with mesh size $\Delta x = \Delta t$. The exact observational data $k(t_n,0)$ is taken at $n=0,1,\ldots,N$, with $N=T/\Delta t$, and we generate noisy data $k^\sigma(t_n,0)$ by adding i.i.d. noise from the uniform distribution $\mathcal{U}[-\sigma,\sigma]$ to the exact data $k(t_n,0)$. We solve the inverse problem using the explicit scheme \eqref{eq:ind_xi}-\eqref{eq:ind_delta}-\eqref{eq:ind_f} applied on coarser temporal meshes than those for the direct problem, in order to avoid the uncontrolled amplification of the data noise.

\begin{example}\label{exp1}
We take $L=10$, $\bar k\equiv0$, and $\phi(t,x)= \frac1L\mathbf{1}_{[0,L]}(x)$, i.e., initially the road network is empty and the entering vehicles' remaining distances follow a uniform distribution on $[0,L]$.
We also set the inflow rates $f\equiv\bar f=0.15$, and consider two terminal times: (a) $T=8$ and (b) $T=16$.    
\end{example}

The difference of case (b) from (a) lies in the fact that in case (b) the discontinuity of $\phi$ at $x=L$ will propagate to the observation point $x=0$ within the time horizon $[0,T]$. The numerical results for cases (a) and (b) with exact data are shown in \cref{fig:exp_1}(i)--(ii), computed with $\Delta x = \Delta t= 10^{-4}$. One observes a fairly accurate reconstruction of the inflow rates $f$ in either case. Remarkably, the presence of the discontinuity does not influence much of the recovery accuracy. The results for noisy data (with $\sigma=10^{-4}$) are given in \cref{fig:exp_1}(iii). These results show that, with proper choice of $\Delta t$, the reconstructed $f$ is reasonably accurate. Although not presented, with a very small time step size, the reconstruction tends to be highly oscillatory, as predicted by \cref{rmk:discret}.

\begin{figure}[hbt!]
\centering\setlength{\tabcolsep}{0pt}
\begin{tabular}{ccc}
\includegraphics[width=.33\textwidth]{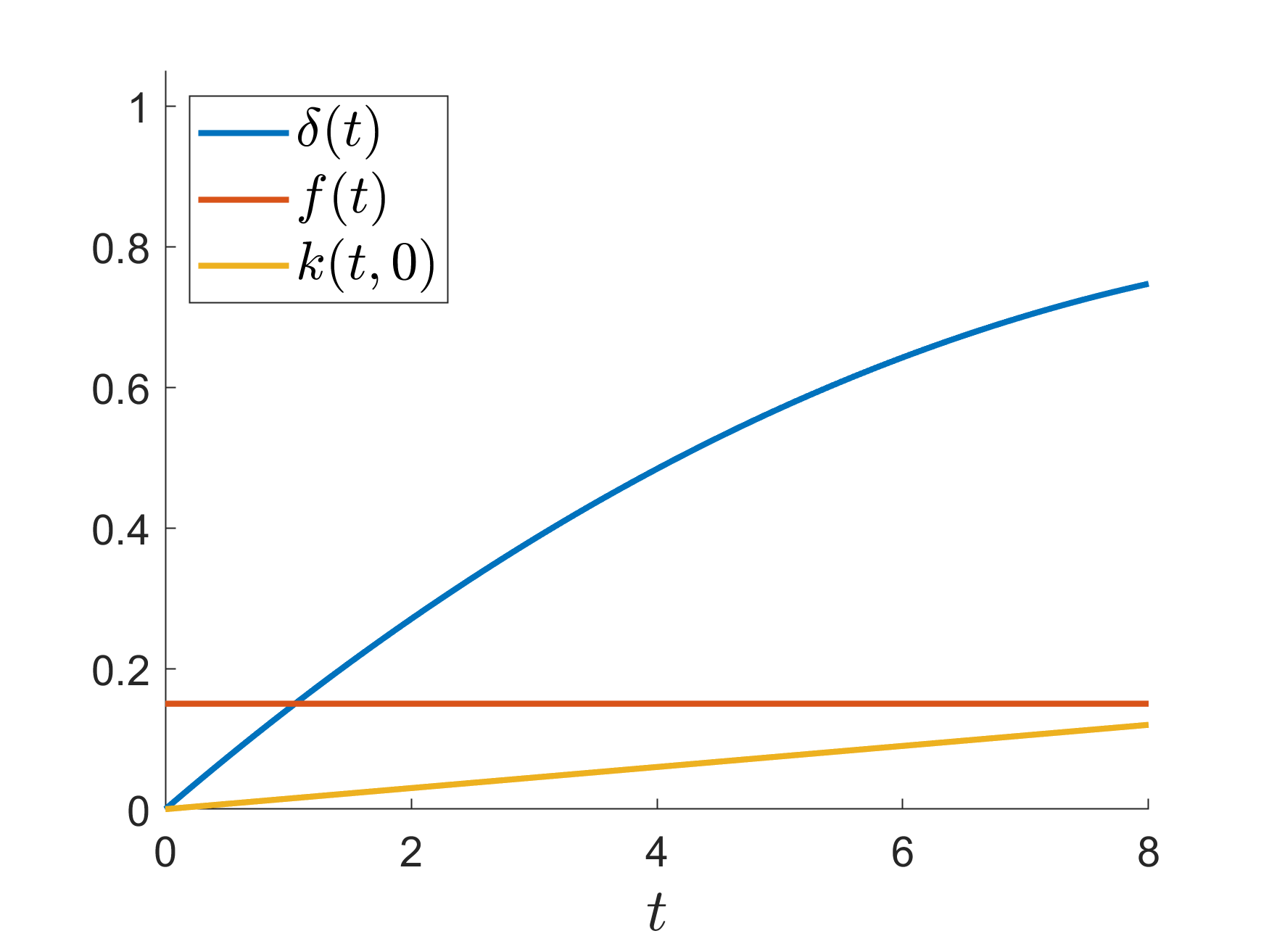}
&	\includegraphics[width=.33\textwidth]{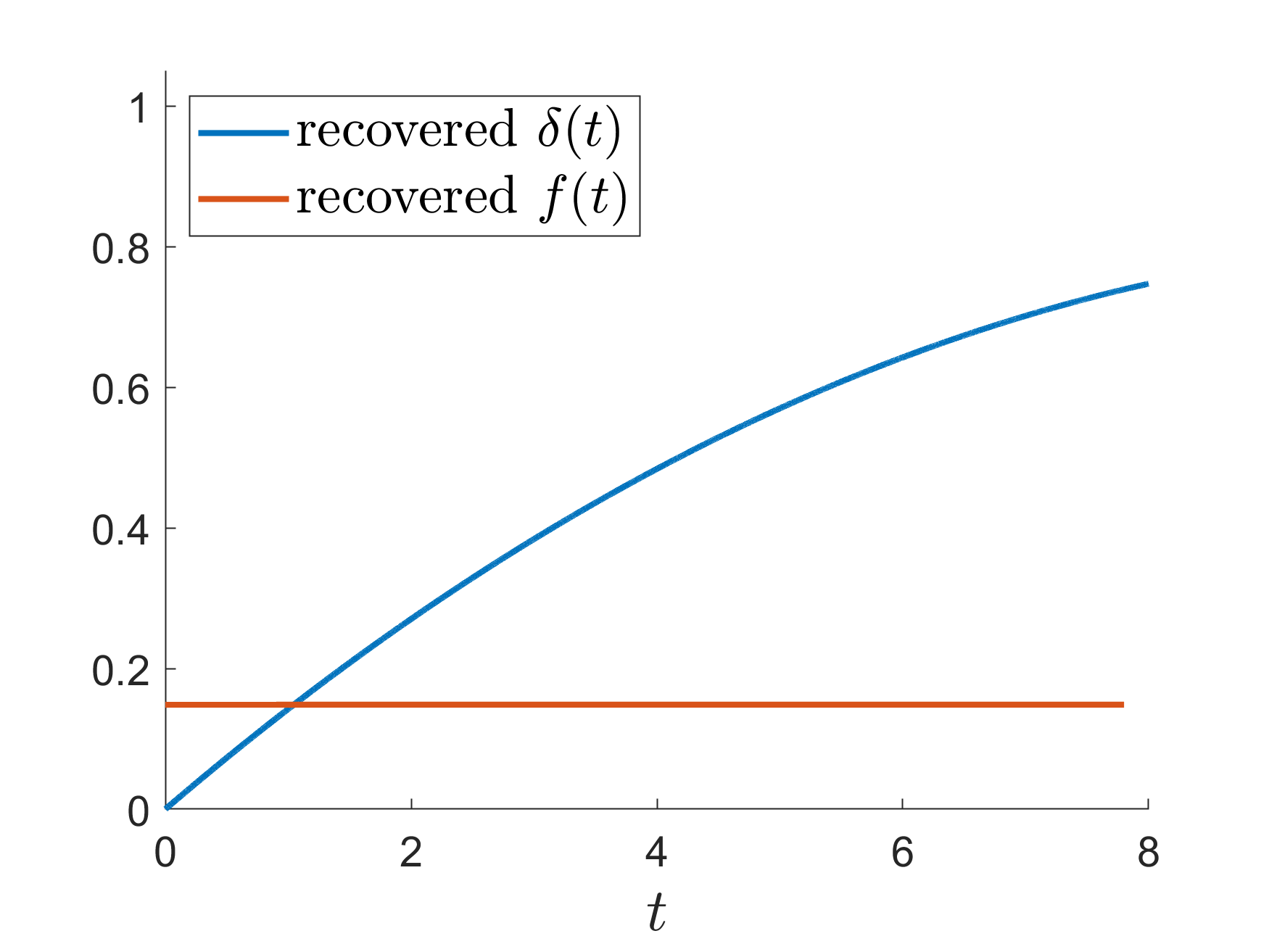} & \includegraphics[width=0.33\textwidth]{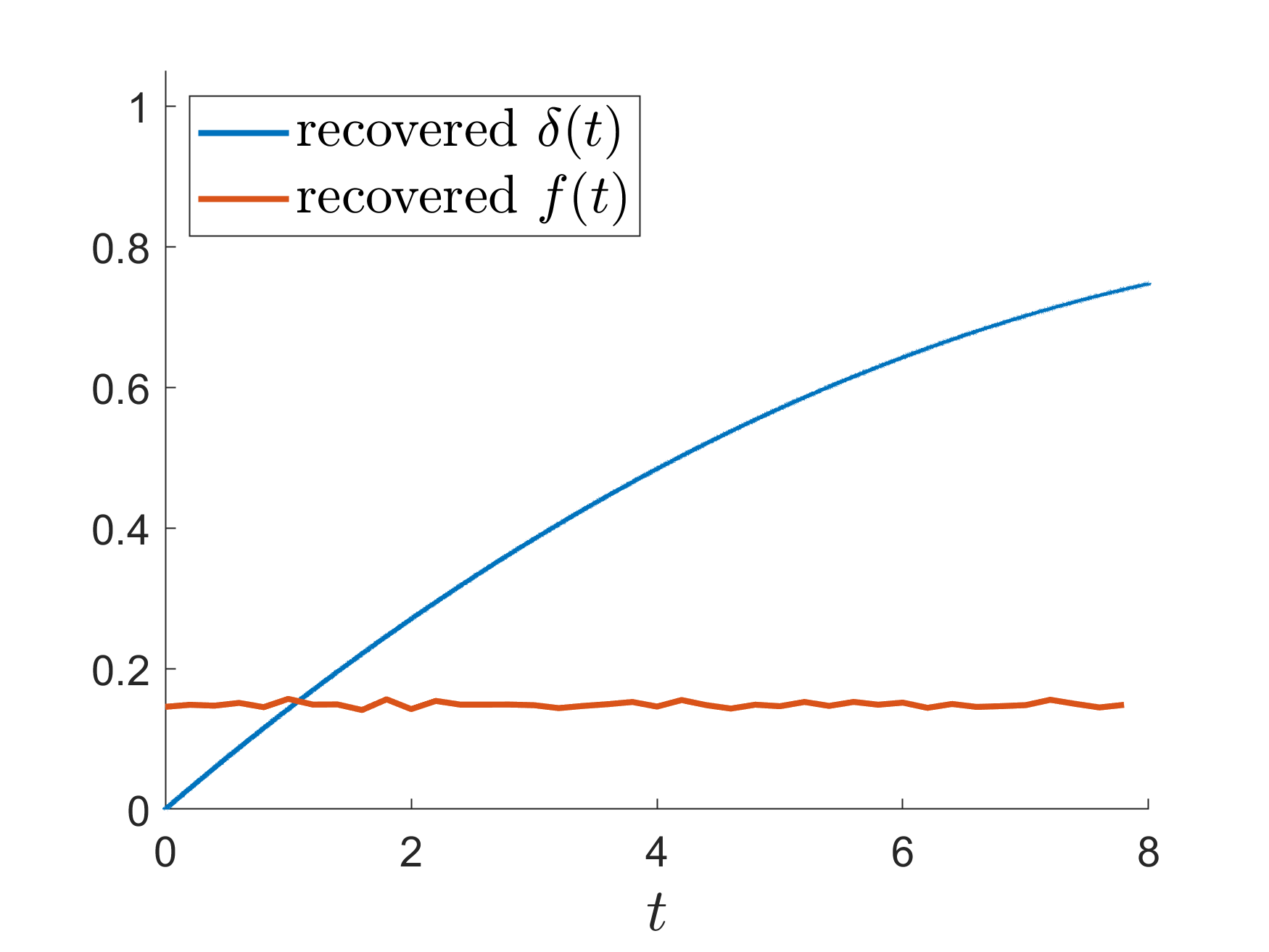}\\
\includegraphics[width=.33\textwidth]{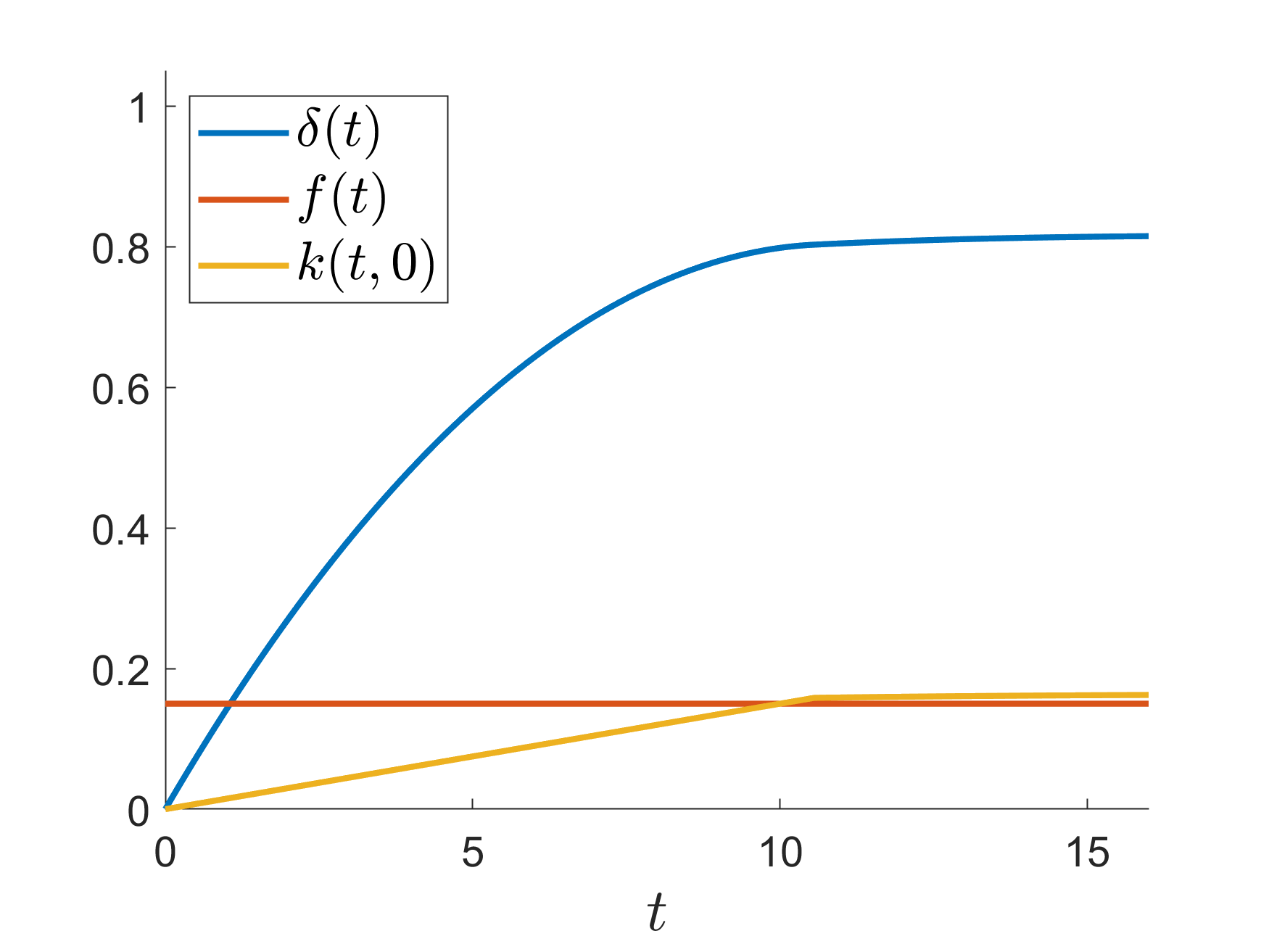}
&	\includegraphics[width=.33\textwidth]{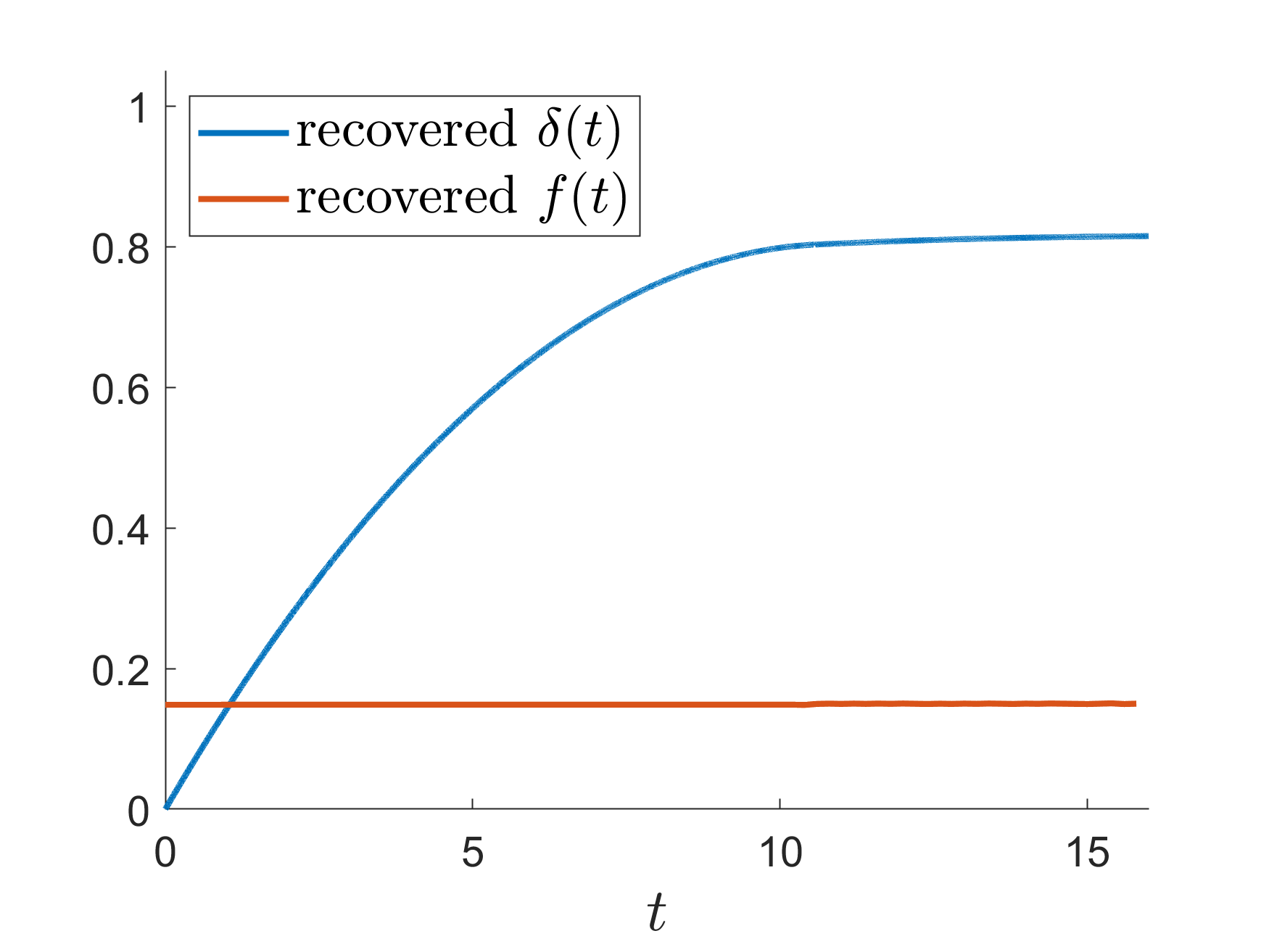} &\includegraphics[width=0.33\textwidth]{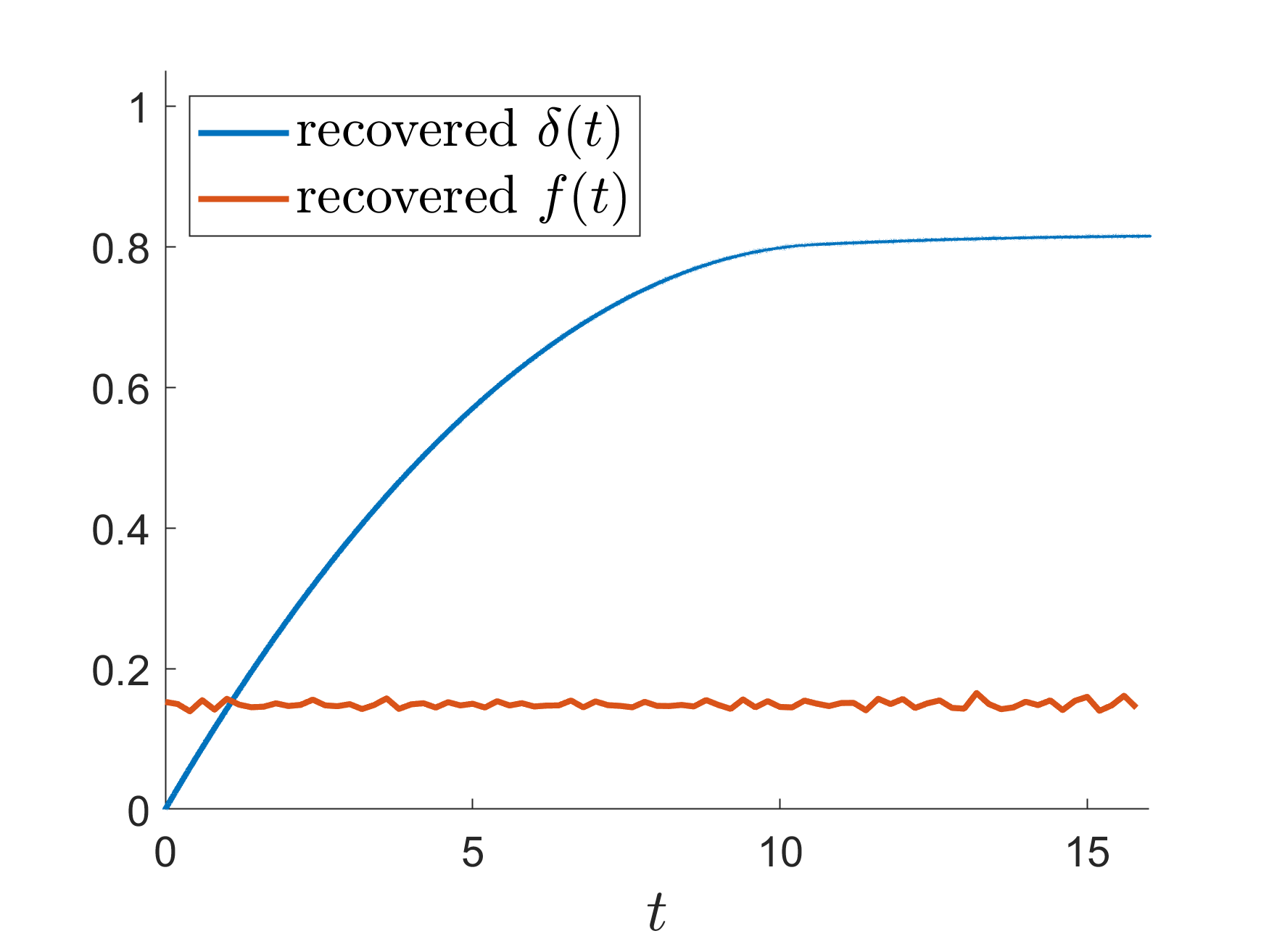}\\
(i) forward & (ii) inverse, exact & (iii) inverse, noisy
\end{tabular}
\caption{Numerical results for \cref{exp1} with exact data and noisy data. The left, mid and right panels are for the forward problem and inverse problem with exact and noisy data, respectively. The top and bottom rows are for cases (a) and (b), respectively.}
	\label{fig:exp_1}
\end{figure}

\begin{example}\label{exp2}
This example consists of three cases. In case (a), we take $L=10$, $T=8$, $\bar k\equiv0$, $\phi(x)=\frac1L\mathbf{1}_{[0,L]}(x)$, and $f(t)=0.2(1+\sin(2\pi t))$. In case (b), we use a nonzero initial data $\bar k(x)=0.1 e^{-10(x-L/2)^2}$ and keep other parameters in case (a) unchanged. In case (c), we use a time-dependent inflow distribution $\phi(t,x)=\frac{1}{\sqrt{2\pi} a}e^{\frac{-(x-b(t))^2}{2a^2}}$ with $a=0.4$ and $b(t)=\frac{L}{2}+\frac{2t-T}{4}$, and keep other parameters in case (a) unchanged.
\end{example}

The numerical results for the three cases with noisy data (with $\sigma=10^{-5}$) are shown in \cref{fig:exp_2}\,(i)--(iii).
Case (a) is to test the time-varying inflow rates $f$, and we observe that the method still gives fairly accurate reconstruction of $f$ with only small oscillations.
In case (b), initially the road network is nonempty and most vehicles have the remaining distance $L/2$.
In case (c), the inflow distribution $\phi$ is time-dependent and most entering vehicles at time $t$ have the remaining distance $b(t)=\frac{L}{2}+\frac{2t-T}{4}$, moving from $\frac{L}{2}-\frac{T}{4}$ to $\frac{L}{2}+\frac{T}{4}$ during the time horizon $[0,T]$. For cases (b) and (c), while the reconstructed $f$ is still as accurate as in case (a), the oscillation becomes larger in case (c). This is expected since the problem with time-dependent inflow distribution would be more challenging.

\begin{figure}[htbp!]
\centering\setlength{\tabcolsep}{0pt}
\begin{tabular}{ccc}
	\includegraphics[width=.33\textwidth]{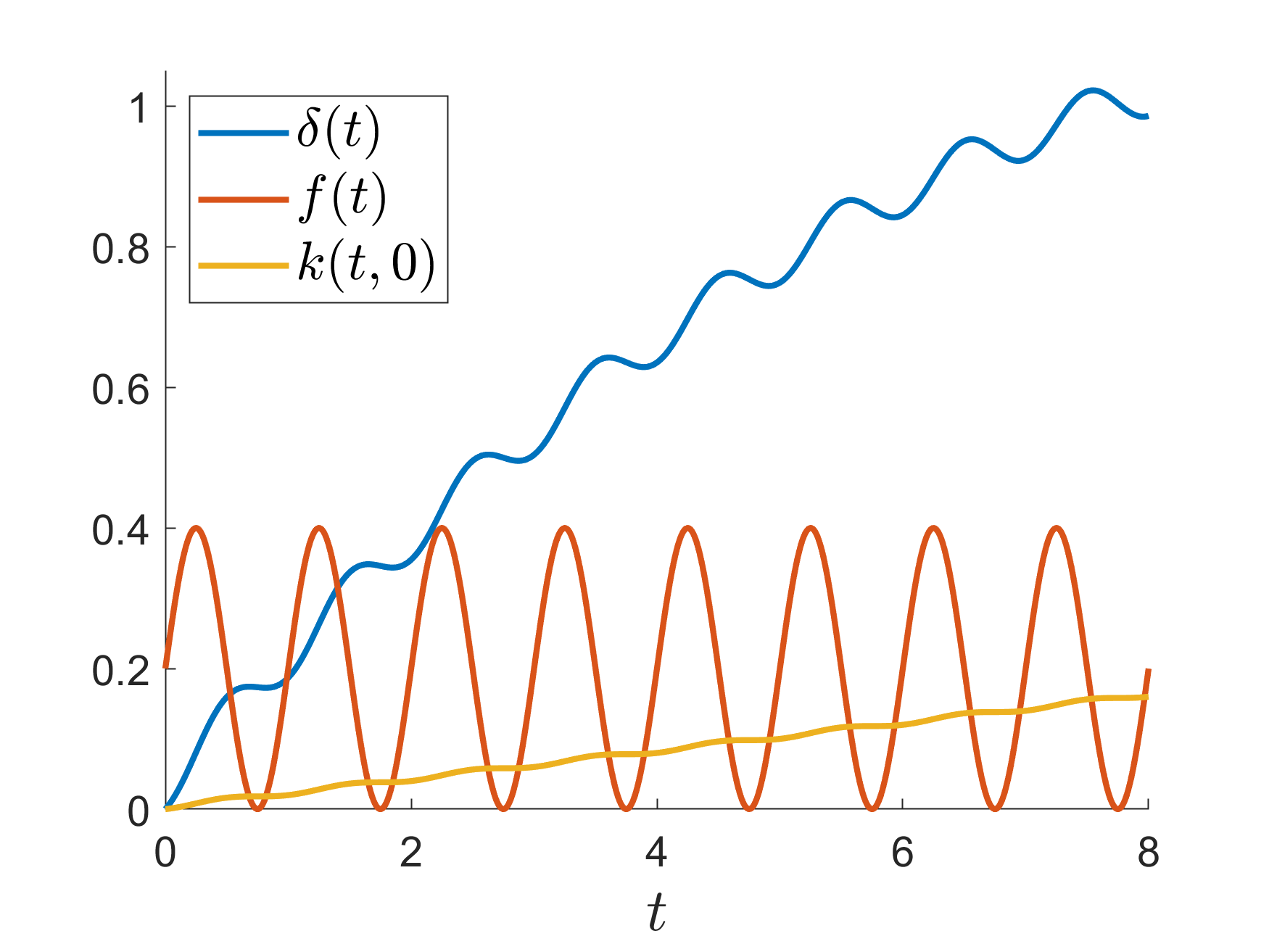} & \includegraphics[width=.33\textwidth]{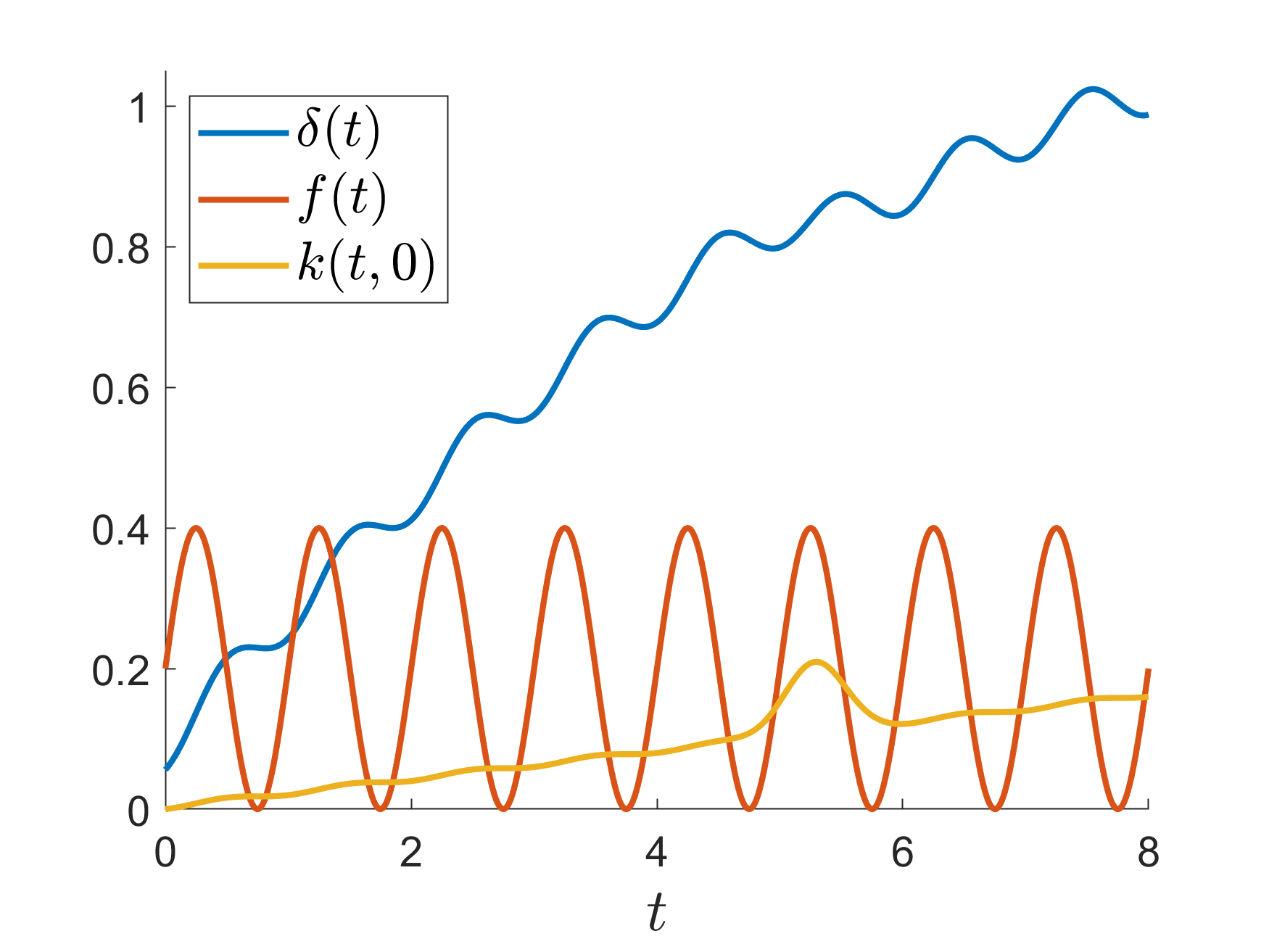} & \includegraphics[width=.33\textwidth]{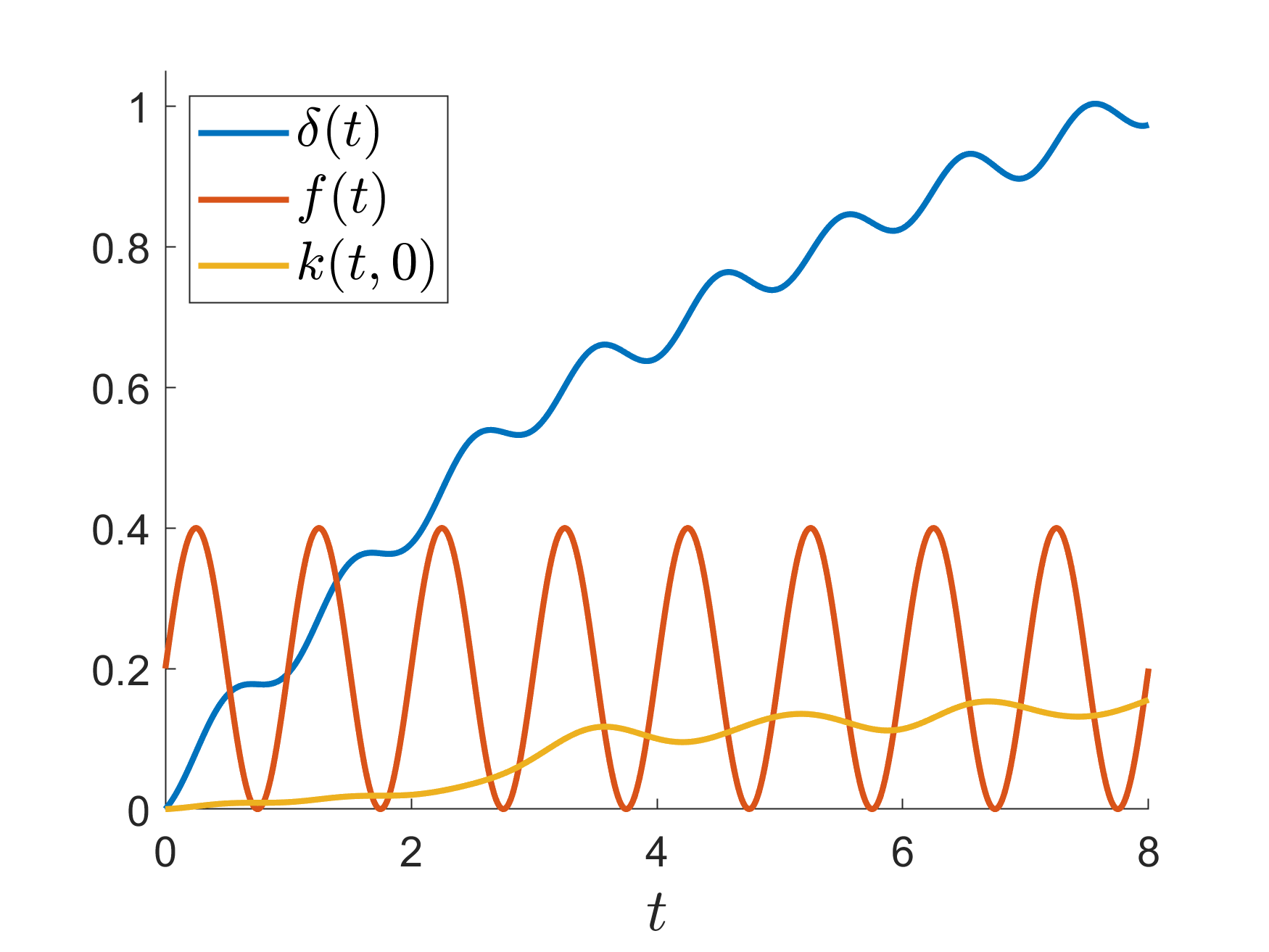} \\
	\includegraphics[width=.33\textwidth]{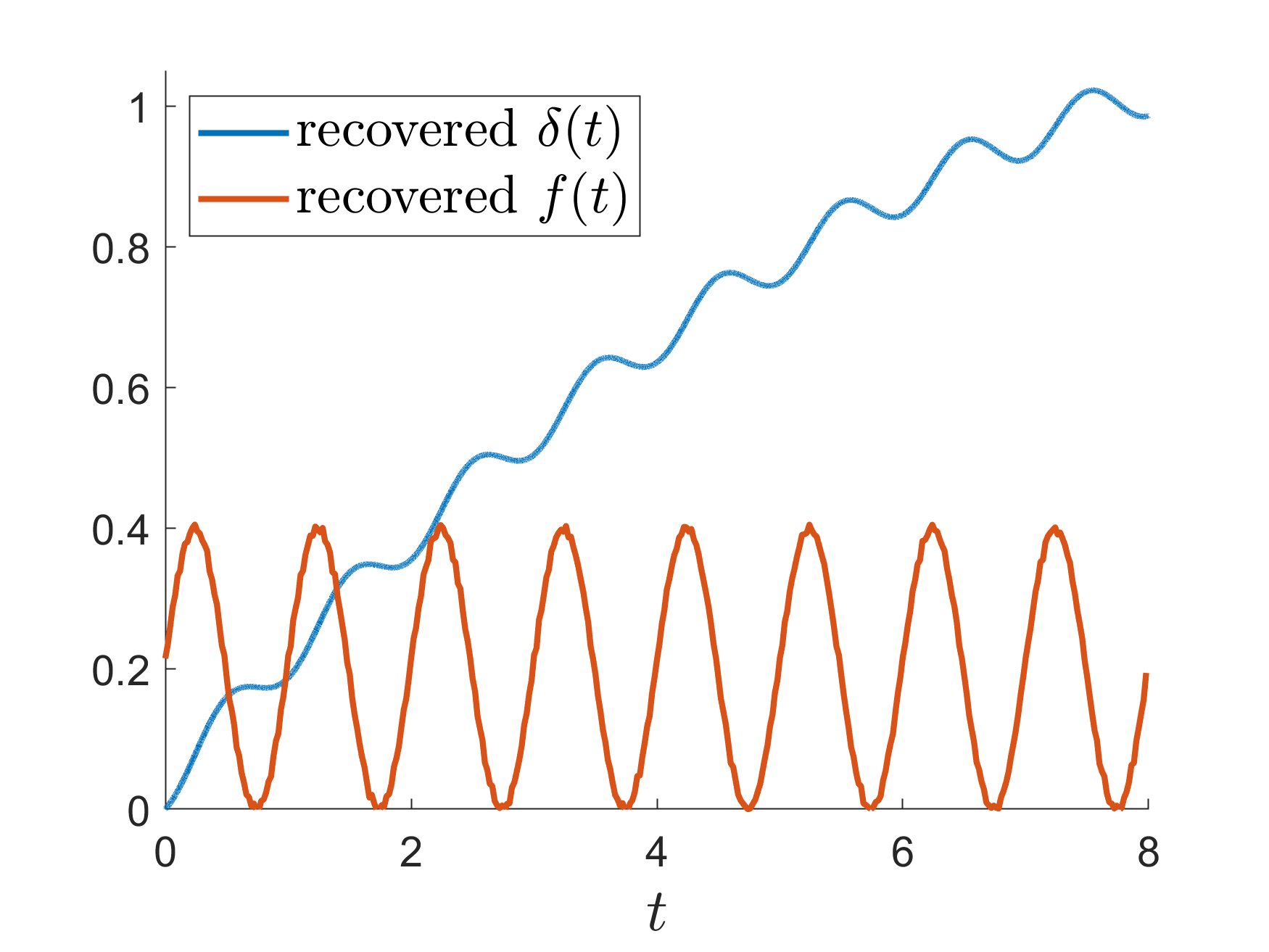} & \includegraphics[width=.33\textwidth]{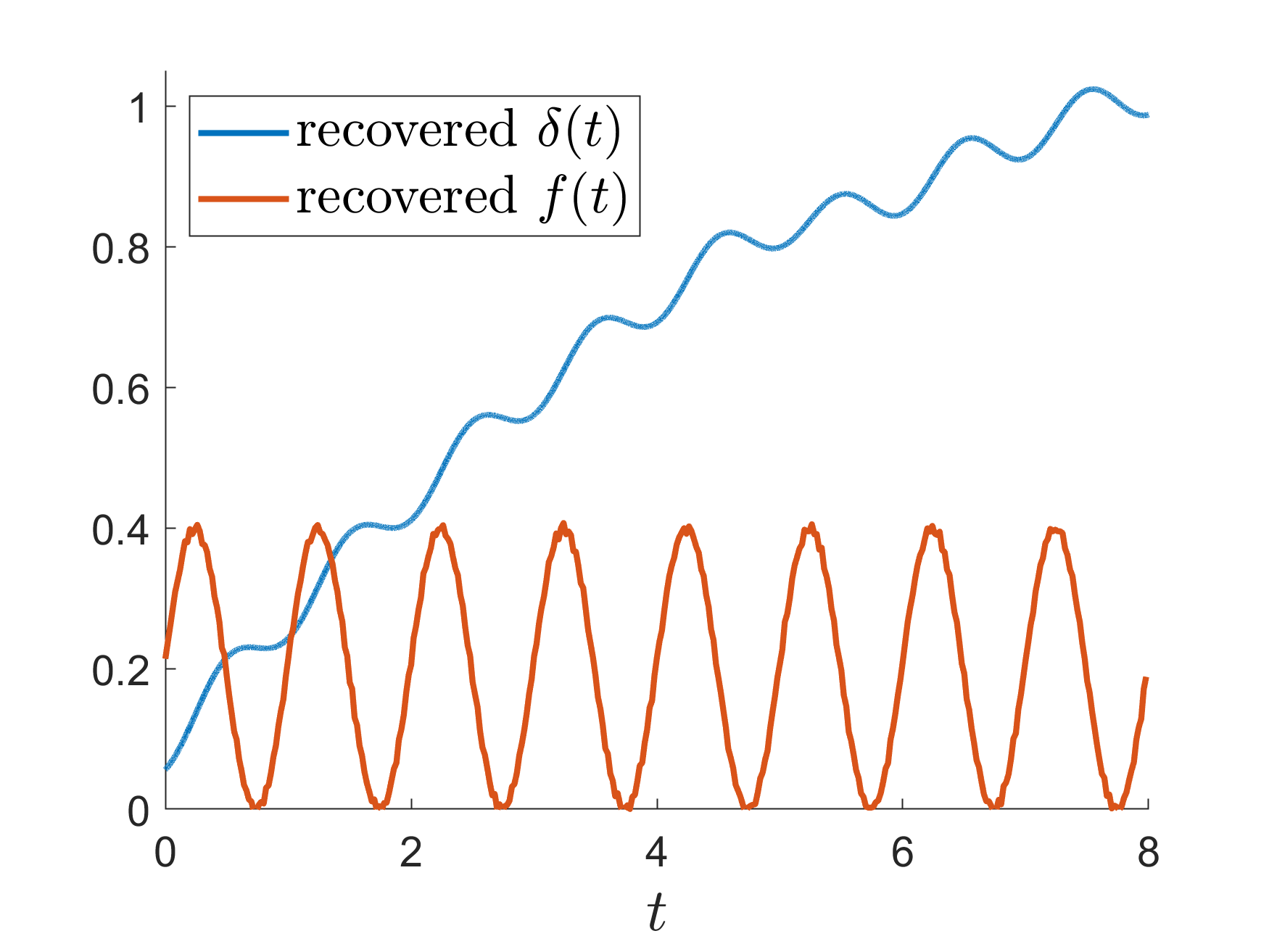} & \includegraphics[width=.33\textwidth]{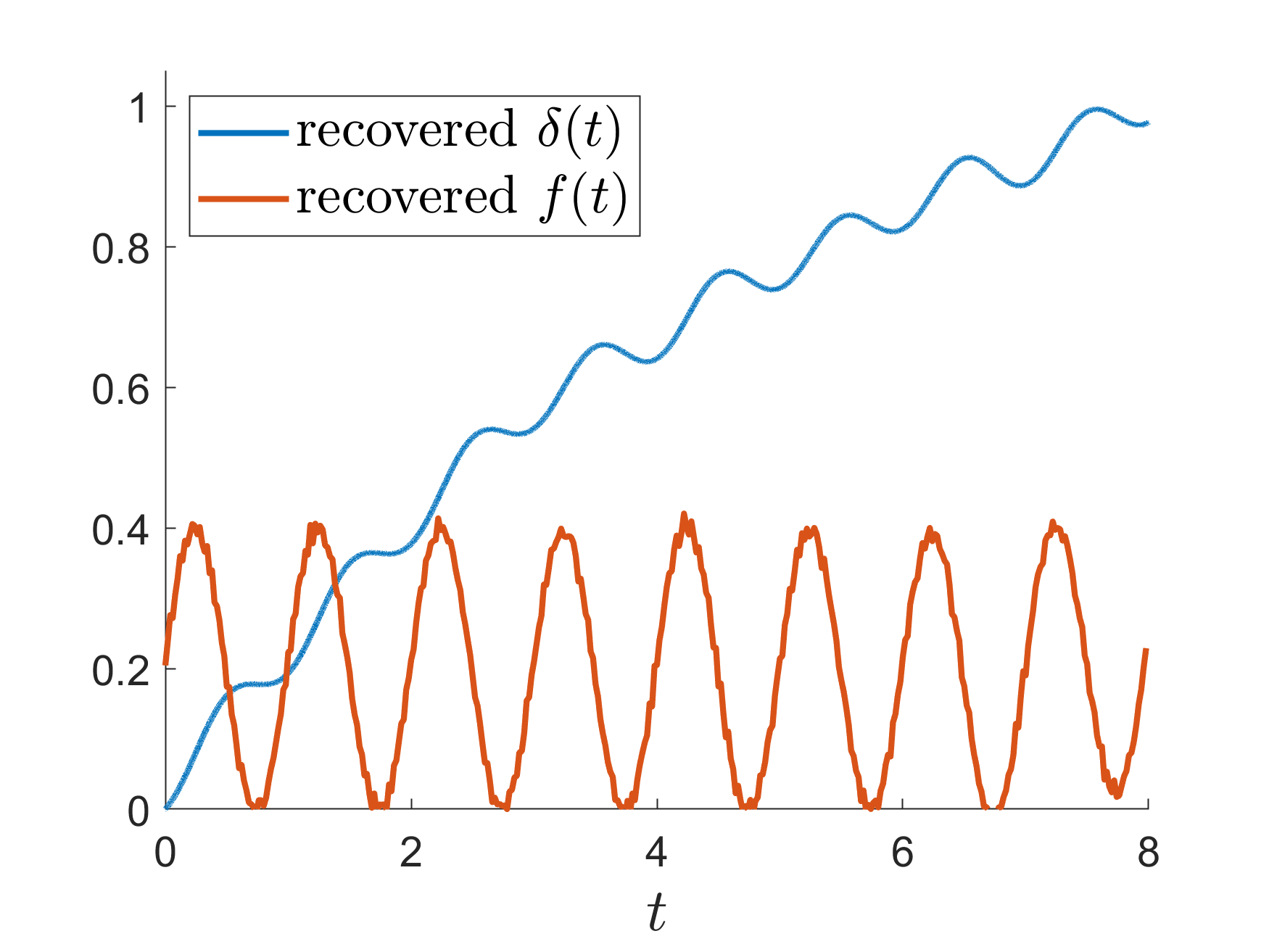} \\
 (i) case (a) & (ii) case (b)  & (iii) case (c)
\end{tabular}
	\caption{Numerical results for the three cases of \cref{exp2}. The top and bottom rows are for the forward problem and inverse problem with noisy data, respectively. }
	\label{fig:exp_2}
\end{figure}

\subsection{Inverse problem for inflow distribution}
\label{ssec:inverse2}
In this subsection, we discuss a related inverse source problem of calibrating the inflow distribution $\phi$ rather than inflow rates $f$ from the boundary data in a simple case where the inflow distribution $\phi$ is time independent.
More precisely, we make the following assumptions.
\begin{assumption}\label{assm:3}
{\rm(i)} The nonlocal kernel $w$ is a constant function, i.e., $ w(y)=\frac1L\mathbf{1}_{[0,L]}(y)$, for a given constant $L>0$; {\rm(ii)} the velocity function $V$ satisfies $V(k)\geq V_{\mathrm{min}}$ for all $k\geq0$ where $V_{\mathrm{min}}$ is a positive constant; {\rm(iii)} the initial data $\bar{k}\in\mathbf{L}^\infty([0,\infty);[0,\infty))$ is $\mathbf{C}^2$ smooth and supported on $[0,L]$; {\rm(iv)} the inflow distribution $\phi(t,x)\equiv\phi(x)$ where $\phi\in\mathbf{L}^\infty([0,\infty);[0,\infty))$ is $\mathbf{C}^2$ smooth and supported on $[0,L]$; and {\rm(v)} the inflow rates $f$ is Lipschitz continuous and $f(0)>0$. 
\end{assumption}
We aim to solve the following inverse problem of the model \eqref{eq:bathtub_model}: given the observation of the boundary data $k(t,0)$ for $t\in[0,T]$, 
find the inflow distribution $\phi(x)$ for $x\in[0,L^*]$ for some $L^*>0$.
The starting point of the analysis is the identities \eqref{eq:inv_source_1}-\eqref{eq:inv_source_3}. Since the inflow rates $f(t)$ and the boundary data $k(t,0)$ are known for $t\in[0,T]$, we can integrate the ODE \eqref{eq:inv_source_1} to obtain $\delta(t)$ for $t\in[0,T]$ and obtain $\xi(t)$ for $t\in[0,T]$ from \eqref{eq:inv_source_3}.
Then, by changing variables $z=\xi(t)$ and $y=\xi(t)-\xi(\tau)$, we can reformulate \eqref{eq:inv_source_2} as
\begin{align}\label{eqn:inv-phi}
    k\left(\xi^{-1}(z),0\right) = \bar{k}(z) + \int_0^z \frac{f\left(\xi^{-1}(z-y)\right)}{V\left(\delta\left(\xi^{-1}(z-y)\right)/L\right)} \phi(y) \d y,
\end{align}
for $z\in[0,L^*]$ where $L^*=\xi(T)$.
This is a Volterra integral equation of the first kind for the function $\phi$.
Following the known theory \cite{Lamm:2000}, Volterra integral equations of the first kind are ill-posed, and the degree of ill-posedness depends on the properties of the kernel. Indeed, using the fact $\xi(0)=0$ and the Lipschitz continuity of $f$ and $V$, we obtain
\begin{equation*}
    \lim_{y\to 0^+} \frac{f\left(\xi^{-1}(y)\right)}{V\left(\delta\left(\xi^{-1}(y)\right)/L\right)} = \frac{f(0)}{V(\bar{\delta}/L)}>0.
\end{equation*} 
Thus, assuming good differentiability of $k$, $\bar k$ and $\delta$, and then differentiating both sides of \eqref{eqn:inv-phi} yield
\begin{equation*}
    \frac{\d}{\d z}\left(k\left(\xi^{-1}(z),0\right)-\bar k(z)\right) = \frac{f(0)}{V(\bar\delta/L)} \phi(z) + \int_0^z \frac{\d}{\d z}\left( \frac{f\left(\xi^{-1}(z-y)\right)}{V\left(\delta\left(\xi^{-1}(z-y)\right)/L\right)} \right)\phi(y) \d y.
\end{equation*} 
This is a linear Volterra integral equation of the second kind for $\phi$, which is well-posed within $\mathbf{L}^p$ spaces given appropriate regularity conditions \cite{Lamm:2000}. Therefore, under \cref{assm:3}, the inverse problem roughly amounts to differentiating the problem data $k(t,0)$ once, resulting in a problem that is only mildly ill-posed. A thorough analysis of this is beyond the scope of the current work and is reserved for future investigation. Instead, we proceed to conduct a numerical exploration of its feasibility.

For the numerical reconstruction, we decompose it into two steps: (i) solve for $\xi^{-1}(z)$ and (ii) solve for $\phi(z)$.
In the first step, we take a uniform temporal mesh $0 = t_0 < t_1 < \cdots < t_{N_t-1} < t_{N_t} = T$, $ t_n = n\Delta t$,
with the mesh size $\Delta t = T/N_t$.
Then we discretize \eqref{eq:inv_source_1} and \eqref{eq:inv_source_3} as:
\begin{align*}
    \delta_{n+1} &= \delta_n + \Delta t\left( f(t_n) - V\left(\frac{\delta_n}L\right)k(t_n,0) \right), \quad n\geq0; \quad \delta_0 = \bar{\delta}. \\
    \xi_n &= \sum_{m=0}^{n-1} V\left(\frac{\delta_m}{L}\right) \Delta t, \quad n\geq1; \quad \xi_0 = 0.
\end{align*}
We solve for $\{\xi_n\}_{n=0}^{N_t}$ from the above equations.
Then we take a uniform spatial mesh $0 = x_0 < x_1 < \cdots < x_{N_x-1} < x_{N_x} = \norm{V}_{\mathbf{L}^\infty}T$, $ x_j = j\Delta x$,
with the mesh size $\Delta x = \norm{V}_{\mathbf{L}^\infty}T/N_x$.
For any $x_j\in(0, \xi_{N_t}]$, there is a unique $(\xi_n,\xi_{n+1}]$ such that $x_j\in (\xi_n,\xi_{n+1}]$. Then for all $j$, we define $\tau_j$, $f_j$ and $v_j$ respectively by
\begin{align*}
    &\tau_j = \frac{\xi_{n+1} - x_j}{\xi_{n+1} - \xi_n} t_n + \frac{x_j - \xi_n}{\xi_{n+1} - \xi_n} t_{n+1}, \qquad
    f_j = \frac{\xi_{n+1} - x_j}{\xi_{n+1} - \xi_n} f(t_n) + \frac{x_j - \xi_n}{\xi_{n+1} - \xi_n} f(t_{n+1}) \\
    &\mbox{and} \qquad
    v_j = \frac{\xi_{n+1} - x_j}{\xi_{n+1} - \xi_n} V(\delta_n/L) + \frac{x_j - \xi_n}{\xi_{n+1} - \xi_n} V(\delta_{n+1}/L).
\end{align*}
In the second step, we solve for $\{\phi_l\}_{l=1}^{N_x}$, which collectively reconstructs $\phi(x)$ for $x\in[0,\xi(T)]$, from the following equation:
\begin{align*}
    k(\tau_j, 0) = \bar{k}(x_j) + \sum_{l=1}^j \frac{f_{j-l}}{v_{j-l}}\phi_l, \quad j=1,2,\cdots,N_x.
\end{align*}

Last we illustrate the feasibility of the approach with one example.
\begin{example}\label{exp4}
We take $L=10$, $\bar{k}\equiv0$, and $\phi(x)=\frac1L\mathbf{1}_{[0,L]}(x)$. We set the inflow rates to be $f\equiv\bar{f}=0.15$, and consider two cases: (a) $T=8$; (b) $T=16$.
\end{example}

The settings of the example are identical to those in \cref{exp1}. The results for \cref{exp4} with noisy data (with $\sigma=10^{-4}$) are shown in \cref{fig:exp_4}. In both cases, we observe that the reconstructions are fairly accurate, with only very mild oscillations. These results clearly show the feasibility of the approach for recovering the inflow distribution $\phi$ from the lateral boundary data $k(\cdot,0)$.

\begin{figure}[htbp]
\centering\setlength{\tabcolsep}{0pt}
\begin{tabular}{cc}
    \includegraphics[width=.48\textwidth]{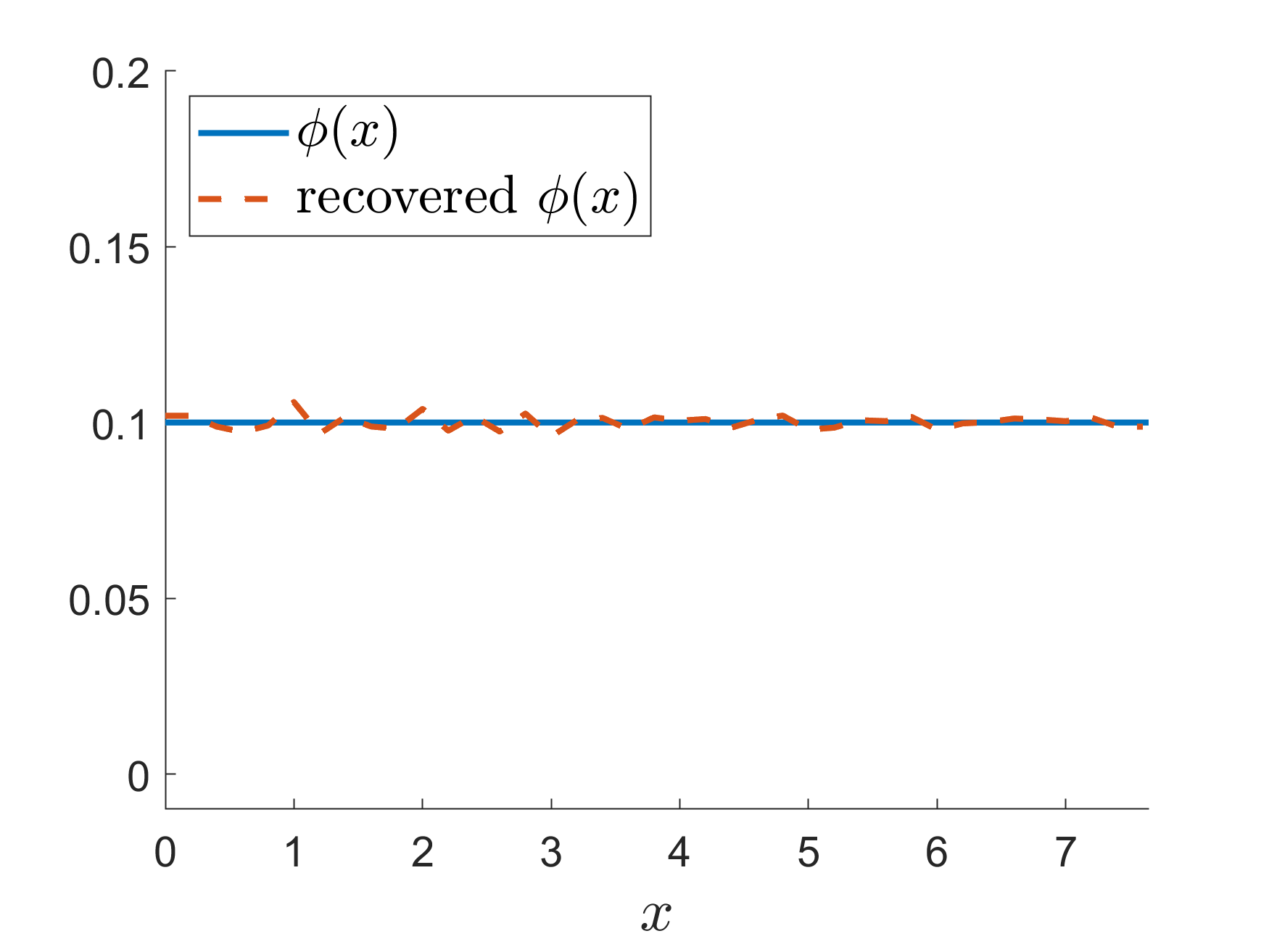} & \includegraphics[width=.48\textwidth]{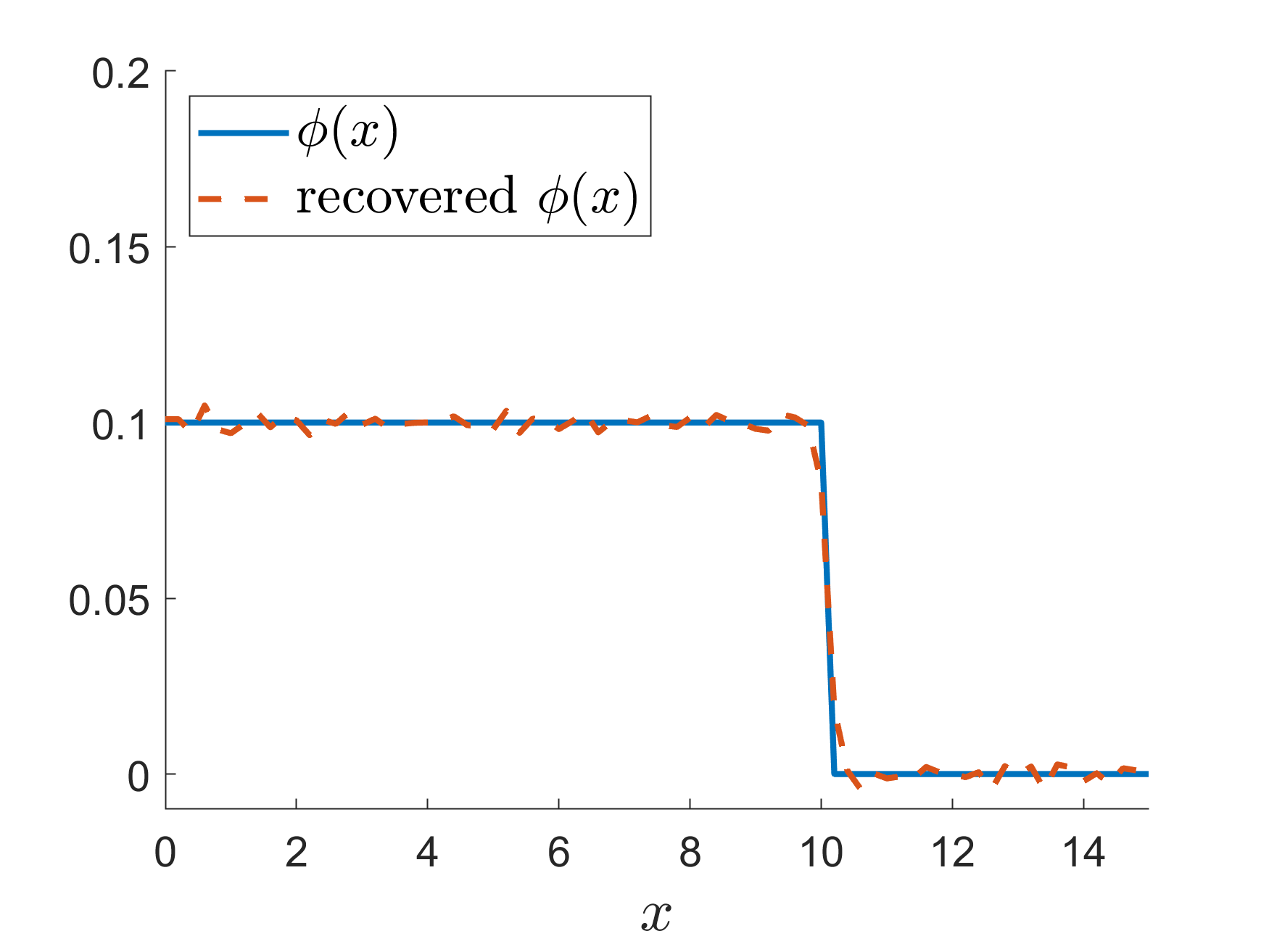} \\
    (i) case (a) & (ii) case (b)
\end{tabular}
\caption{Numerical results for the two cases of \cref{exp4} with noisy data. }
\label{fig:exp_4}
\end{figure}

\section{Concluding remarks}
In this work, we have investigated the generalized bathtub model arising in network traffic flows. We have established the well-posedness of the direct problem and a conditional Lipschitz stability for an inverse source problem of recovering the inflow rates from lateral boundary measurements. This stability result is due to the transport nature of the model with suitable smoothness conditions on the inflow distribution. We have also discussed the existence issue when the inflow distribution is nonsmooth. Furthermore, we presented an explicit numerical method for recovering the inflow rates, provided the error analysis, and conducted multiple numerical experiments showcasing the feasibility of the method. To the best of our knowledge, this represents one first mathematical study of an inverse source problem for nonlocal traffic flow models.
The theoretical and numerical findings in this work may offer useful insights for studies on inverse problems in other nonlocal traffic flow models, or more broadly, in nonlocal transport equations and balance laws.

In future work, we would like to explore more on the case of nonsmooth inflow distribution under some realistic traffic scenarios. In addition, it is of much interest to implement the reconstruction algorithm with field data and validate the applicability of the generalized bathtub model in real traffics. We are also interested in other inverse problems associated with the generalized bathtub model, e.g., calibrating both inflow rates and inflow distribution from more comprehensive measurement data.


\bibliographystyle{siam}
\bibliography{references}
\end{document}